\newcommand{\ou}{[0,1]}
\definecolor{dg}{rgb}{0, 0.5, 0}
\newcommand{\bb}{\mathbf{B}}
\newcommand{\bch}{\bar{\mathcal{H}}}
\newcommand{\bd}{\mathbf{D}}
\newcommand{\bj}{\mathbf{J}}
\newcommand{\capa}{\text{Cap}}
\newcommand{\der}{\delta}
\newcommand{\id}{\mbox{Id}}
\newcommand{\iot}{\int_{0}^{t}}
\newcommand{\ot}{[0,t]}
\newcommand{\1}{{\bf 1}}
\newcommand{\lp}{\left(}
\newcommand{\rp}{\right)}
\newcommand{\lc}{\left[}
\newcommand{\rc}{\right]}
\newcommand{\lcl}{\left\{}
\newcommand{\rcl}{\right\}}
\newcommand{\lln}{\left|}
\newcommand{\rrn}{\right|}
\newcommand{\lla}{\left\langle}
\newcommand{\rra}{\right\rangle}
\newcommand{\ep}{\varepsilon}
\newcommand{\ga}{\gamma}
\newcommand{\la}{\lambda}
\newcommand{\om}{\omega}
\newcommand{\oom}{\Omega}
\newcommand{\si}{\sigma}
\newcommand{\vp}{\varphi}
\newcommand{\beq}{\begin{equation}}
\newcommand{\eeq}{\end{equation}}
\newcommand{\bea}{\begin{eqnarray}}
\newcommand{\eea}{\end{eqnarray}}
\newcommand{\beas}{\begin{eqnarray*}}
\newcommand{\eeas}{\end{eqnarray*}}
\def\cC{{\mathcal C}}
\def\me{{\mathbb  E}}
\def\md{{\mathbb D}}
\def\mr{{\mathbb  R}}
\def\mn{{\mathbb  N}}
\def\mp{{\mathbb  P}}
\newcommand{\cac}{{\mathcal C}}
\newcommand{\ce}{{\mathcal E}}
\newcommand{\cf}{{\mathcal F}}
\newcommand{\ch}{{\mathcal H}}
\newcommand{\cn}{{\mathcal N}}
\newcommand{\crr}{{\mathcal R}}
\newcommand{\D}{{\mathbb D}}
\newcommand{\EE}{{\mathbb E}}
\newcommand{\PP}{{\mathbb P}}
\newcommand{\R}{{\mathbb R}}
\newtheorem{theorem}{Theorem}[section]
\newtheorem{corollary}[theorem]{Corollary}
\newtheorem{definition}[theorem]{Definition}
\newtheorem{hypothesis}[theorem]{Hypothesis}
\newtheorem{lemma}[theorem]{Lemma}
\newtheorem{proposition}[theorem]{Proposition}
\theoremstyle{remark}
\newtheorem{remark}[theorem]{Remark}
\theoremstyle{remark}
\newtheorem{example}[theorem]{Example}
\theoremstyle{remark}
\newtheorem{foo}[theorem]{Remarks}
\newenvironment{Remark}{\begin{remark}\rm}{\end{remark}}
\title[Probability laws of fractional RDEs]{On probability laws of solutions to differential systems driven by a fractional Brownian motion}
\author[F. Baudoin \and E. Nualart \and C. Ouyang \and S. Tindel]{F. Baudoin \and E. Nualart \and C. Ouyang \and S. Tindel}
\thanks{First author supported in part by
NSF Grant DMS 0907326. Second author acknowledges support from the European Union program 
 FP7-PEOPLE-2012-CIG under grant agreement 333938.  Fourth author is member of the BIGS (Biology, Genetics and Statistics) team at INRIA}
\subjclass[2010]{60G15; 60H07; 60H10; 65C30}
\keywords{fractional Brownian motion, rough paths, Malliavin calculus}
\date{\today}
\address{Fabrice Baudoin, Dept. Mathematics, Purdue University, 150 N. University St., West Lafayette, IN 47907-2067, USA.}
\email{fbaudoin@math.purdue.edu}
\address{Eulalia Nualart, Dept. Economics and Business, Universitat Pompeu Fabra and Barcelona Graduate School of Economics, Ram\'on Trias Fargas 25-27, 08005 Barcelona, Spain}
\email{eulalia@nualart.es}
\address{Cheng Ouyang, Dept. Mathematics, Statistics and Computer Science, University of Illinois at Chicago, 851 S. Morgan St., Chicago, IL 60607, USA.}
\email{couyang@math.uic.edu}
\address{Samy Tindel, Institut {\'E}lie Cartan, Universit\'e de Lorraine, B.P. 239,
54506 Vand{\oe}u\-vre-l{\`e}s-Nancy, France.}
\email{Samy.Tindel@univ-lorraine.fr}
\begin{document}

\begin{abstract}
This article investigates several properties related to densities of solutions $(X_t)_{t\in\ou}$ to differential equations driven by a fractional Brownian motion with Hurst parameter $H>1/4$. We first determine conditions for strict positivity of the density of $X_t$. Then we obtain some exponential bounds for this density when the diffusion coefficient satisfies an elliptic type condition. Finally, still in the elliptic case, we derive some bounds on the hitting probabilities of sets by fractional differential systems in terms of Newtonian capacities.
\end{abstract}

\maketitle

\tableofcontents

\section{Introduction}

Let $B=(B^1,\ldots,B^d)$ be a $d$-dimensional fractional Brownian motion indexed by $\ou$, with Hurst parameter $H>1/4$, defined on a complete probability space $(\Omega,\cf,\PP)$. Recall that this means that the components $B^i$ are i.i.d and that each $B^i$ is a centered Gaussian process satisfying
\begin{equation}\label{eq:var-increments-B}
\EE\lc (B_t^i-B_s^i)^{2} \rc = |t-s|^{2H}.
\end{equation}
In particular, for any $H>1/4$, the path $t\mapsto B_t$ is almost surely $(H-\ep)$-H\"older continuous for any $\ep>0$ and for $H=1/2$ the process $B=B^{H}$ coincides with the usual $d$-dimensional Brownian motion.

\smallskip

We are concerned here with the following class of equations driven by $B$:
\begin{equation}
\label{eq:sde-intro} X^{x}_t =x +\int_0^t V_0 (X^x_s)ds+
\sum_{i=1}^d \int_0^t V_i (X^{x}_s) dB^i_s, \quad t\in\ou,
\end{equation}
where $x$ is a generic initial condition and $\{V_i;\, 0\le i\le d\}$ is a collection of smooth vector fields of $\R^n$. Owing to the fact that the family $\{B^{H}; 0<H<1\}$ is a very natural generalization of Brownian motion, this kind of system is increasingly used in applications and has also been thoroughly analyzed in the last past years at a theoretical level. 

\smallskip

Among the contributions to the study of \eqref{eq:sde-intro} which seem most relevant to our purposes let us first mention the resolution of the equation, with Young type integration methods for $H>1/2$ (cf. \cite{Za}) and rough paths techniques for $H\in(1/4,1/2)$ (see e.g \cite{FV-bk}). Then once equation  \eqref{eq:sde-intro} is solved, a natural question to address is to get some information on the law of the random variable $X^x_t$ when $t\in(0,1]$. To this respect, we have to distinguish several cases:
\begin{itemize}
\item 
When $H>1/2$ and under ellipticity assumptions on the vector fields $V_i$, existence and smoothness of the density are shown in \cite{NS,HN}. The H\"ormander's case for $H>1/2$ is treated in \cite{BH}.
\item
When $H\in(1/4,1/2)$, the integrability of the Jacobian established in \cite{CLL} immediately yields smoothness of the density in the elliptic case. The hypoelliptic case is handled in the series of papers \cite{CF,H-P,HT}, culminating by the reference \cite{CHLT} which gives a H\"omander's type criterion for a wide class of Gaussian processes including fBm with $H\in(1/4,1/2)$.
\item
Concentration results and exponential bounds on the density are treated in particular cases: gradient bounds in the  case $H>1/2$ are obtained in \cite{BO12}, and an upper bound for the density in a skew-symmetric situation is addressed in \cite{BOT}.
\end{itemize}
Let us also mention several attempts of small time asymptotics for the density of $X^x_t$, like the expansions contained in \cite{BC07,BO,NNRT}.

\smallskip

The current article should be seen as another step towards a better understanding of the law of $X^x$ as a process when the coefficients of equation \eqref{eq:sde-intro} satisfy different kind of  ellipticity conditions.

\smallskip

The following assumption will prevail until the end of the paper:
\begin{hypothesis}\label{hyp:regularity-V}
The vector fields $V_0,\ldots,V_d$ are $C_b^{\infty}(\R^n)$ (bounded together with all their derivatives).
\end{hypothesis} 

Let us now range our non-degeneracy conditions in increasing order of restrictions: the first kind of assumption is a rather mild control-type hypothesis which can be traced back to   \cite{BL} and \cite{Bis}.

\begin{hypothesis}\label{hyp:ben-arous-leandre}
Let $\ch$ be the Hilbert space related to our fBm $B$ (see the definition at Section \ref{sec:malliavin-tools}) and define a map $\Phi: \ch\to\cC(\R^n)$ such that for all $h\in\ch$, $\Phi(h)$ is defined by the ordinary differential equation
\begin{equation*}
\Phi(h)_t=x+\int_0^tV_0(\Phi(h)_s)ds+\sum_{i=1}^d\int_0^tV_i(\Phi(h)_s)d \crr h^i_s,
\end{equation*}
which is understood in the {($p$-var)} Young sense and where the isometry $\crr$ is defined by relation \eqref{eq:def-R}.  Then for any  $y\in\R^n$, there exists an element $h\in\ch$ such that $\Phi(h)_t=y$ and $\Phi(h)$ is a submersion.
\end{hypothesis}

Hypothesis \ref{hyp:ben-arous-leandre} is a variant of H\"ormander's condition, and it has been shown in~\cite{BL} that it is equivalent to the strict positivity of the density function of $X_t^x$ in case of equations driven by Brownian motions. More precisely, as pointed out in \cite[Page 28]{Bis}, Hypothesis~\ref{hyp:ben-arous-leandre} is for instance satisfied if the following  condition is met:  For every $x \in \mathbb{R}^n$ and every non vanishing $\lambda \in \mathbb{R}^d$,  the vectors $V_1(x), \cdots, V_d(x)$ and $[V_1, Y](x), \ldots, [V_d, Y](x)$ span $\mathbb{R}^n$, where we have set $Y=\sum_{i=1}^d \lambda_i V_i$. 

\smallskip

\noindent
This provides a handy geometric interpretation of this assumption and the usual diffusion case tends to indicate that Hypothesis \ref{hyp:ben-arous-leandre} should be minimal in order to establish strict positivity of the density for $X_t^x$.

\smallskip

The second assumption we shall invoke is of elliptic type, and can be stated as follows:
\begin{hypothesis} \label{hyp:elliptic}
The vector fields $V_1,\ldots ,V_d$ of equation \eqref{eq:sde-intro} form an elliptic system, that is,
\begin{equation}\label{eq:hyp-elliptic}
 v^{*} V(x) V^{*}(x) v  \geq \lambda \vert v \vert^2, \qquad \text{for all } v,x \in \R^n,
\end{equation}
where we have set $V=(V_j^i)_{i=1,\ldots ,n; j=1,\ldots d}$ and where $\lambda$ designates  a strictly positive constant. 
\end{hypothesis}

With this set of hypotheses in hand, we obtain the following results:

\smallskip

\noindent
\textbf{(1)}
We first give some general conditions in order to check that the density $p_t$ of $X_t^x$ is strictly positive on $\R^n$:
\begin{theorem}\label{thm:strict-positivity-intro}
Consider the solution $X^{x}$ to equation \eqref{eq:sde-intro} driven by a $d$-dimensional fBm with Hurst parameter $H>1/4$.
Assume that Hypotheses \ref{hyp:regularity-V} and \ref{hyp:ben-arous-leandre} are satisfied, let $t\in(0,1]$ and consider the density $p_t:\R^{n}\to\R_{+}$ of the random variable $X_t^x$. Then  $p_t(y)>0$ for all $y\in\R^n$.
\end{theorem}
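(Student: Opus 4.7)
\medskip

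\noindent\textbf{Proof plan.} The proof follows the strategy of Ben Arous--L\'eandre \cite{BL}, suitably adapted to the rough setting $H\in(1/4,1/2)$. Fix $y\in\R^n$ and, by Hypothesis \ref{hyp:ben-arous-leandre}, choose $h\in\ch$ such that $\Phi(h)_t=y$ and the Fr\'echet derivative of the map $g\mapsto\Phi(g)_t$ at $g=h$ is surjective onto $\R^n$. The idea is to shift the fBm $B$ by $h$: the Cameron--Martin theorem turns this shift into an equivalent change of measure, while the submersion property forces the random endpoint $X^x_t(B+h)$, viewed as a functional of $B$, to charge a full neighborhood of $y$ with a strictly positive continuous density at $y$.

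More precisely, the Cameron--Martin theorem for fBm yields a strictly positive Radon--Nikodym factor $Z_h$ such that for every Borel $A\subset\R^n$,
\[
\PP\!\lp X^x_t(B)\in A\rp \,=\, \EE\!\lc \1_{\{X^x_t(B+h)\in A\}}\,Z_h\rc .
\]
It therefore suffices to prove that the Wiener functional $F(B):=X^x_t(B+h)$ admits a continuous density $q_F$ with $q_F(y)>0$. Note that $F(0)=\Phi(h)_t=y$, where $F(0)$ is interpreted via Young integration against $h\in\ch$ as in Hypothesis \ref{hyp:ben-arous-leandre}, and the Malliavin derivative of $F$ at $B=0$ coincides with the surjective Fr\'echet derivative of $g\mapsto\Phi(g)_t$ at $h$.

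The last step is to upgrade this infinitesimal non-degeneracy to a genuine non-degeneracy of the Malliavin covariance matrix $\gamma_F$ on an event of positive $\PP$-probability. Using continuity of the It\^o--Lyons map and the Jacobian integrability of \cite{CLL}, one shows that $\gamma_F$ depends continuously on the driving rough path in a suitable $p$-variation topology, hence remains invertible whenever $B$ sits in a small enough ball around $0$. A Gaussian small-ball estimate guarantees that this event has positive probability, and a standard Malliavin integration by parts localized near $y$ then produces the desired positive lower bound on $q_F(y)$, whence $p_t(y)>0$ via the Cameron--Martin formula above.

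The main obstacle is precisely this final continuity/small-ball step in the rough regime $H\in(1/4,1/2)$: one needs fine control of the dependence of the Jacobian and of $\gamma_F$ on the driving rough path, together with small-ball estimates for fBm in $p$-variation norms. Once these ingredients are in place, the rest of the argument mirrors the classical Brownian case of \cite{BL,Bis}.
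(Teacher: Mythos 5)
Your proposal takes a genuinely different route from the paper, and it has a real gap at the crucial step.

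\smallskip
\noindent\textbf{Different route.} The paper's proof does not localize around the skeleton path directly. Instead it invokes the abstract positivity criterion of Theorem~\ref{th: main-sufficient} (from Nualart's Saint-Flour notes), whose key input is a family of transformations $T^h_N\colon\Omega\to\Omega$ such that $\PP\circ(T^h_N)^{-1}\ll\PP$ and $X^x_t\circ T^h_N\to\Phi(h)_t$. The transformations used are $T^h_N(B)=B-B^N+\crr h$, and the whole content of Proposition~\ref{prop:good-karhunen} and Corollary~\ref{cor:def-B-n} is to build a special Karhunen--Lo\`eve basis so that $B-B^N$ is \emph{equivalent in law} to $B$ on $[0,\tau]$; this is the technical substitute, in the non-Markovian fBm setting, for the Markov-property restarting trick of \cite{BL}. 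The remaining work (Proposition~\ref{th: rough-approx1}, Theorem~\ref{th: rough-approx2}) shows that the translated rough path $T_h(\tilde{\mathbf{B}}^N)$ converges to $\mathbf{h}$ in $p$-variation and has uniform Gaussian tails, so that conditions (1)--(3) of \textbf{(H1)} hold. You, by contrast, propose to perform a single Cameron--Martin shift $B\mapsto B+\crr h$, observe that $F(B)=X^x_t(B+\crr h)$ satisfies $F(0)=y$ with deterministic Malliavin matrix invertible (by the submersion part of Hypothesis~\ref{hyp:ben-arous-leandre}), and then argue that $q_F(y)>0$ by continuity of the It\^o--Lyons map plus a small-ball estimate plus ``a standard Malliavin integration by parts localized near $y$.'' This is structurally closer to the original Ben Arous--L\'eandre argument, but it is not what the paper does.

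\smallskip
\noindent\textbf{The gap.} The step ``a standard Malliavin integration by parts localized near $y$ then produces the desired positive lower bound on $q_F(y)$'' is not standard, and it is precisely the hard part. Non-degeneracy of $\gamma_F$ on a positive-probability event $E=\{\|\mathbf{B}\|_{p\text{-var}}<\ep\}$ does not by itself yield a lower bound on the density of $F$ at $y$: to run a localized integration by parts one needs a cutoff function $\phi$ supported in $E$ that is smooth in the Malliavin sense with controlled Sobolev norms, and $p$-variation (or H\"older) norms of rough paths are \emph{not} smooth Wiener functionals. Constructing admissible cutoffs and showing that the resulting weights $H_\alpha(F,\phi)$ stay integrable is exactly the obstruction that the Karhunen--Lo\`eve device in the paper sidesteps: the transformations $T^h_N$ are linear and finite-rank perturbations of the identity, so the criterion of Theorem~\ref{th: main-sufficient} never requires localizing in $\omega$-space at all. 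Absent the Markov property, there is no cheap substitute, so as written your plan leaves the decisive technical step unproved. You correctly identify this as ``the main obstacle,'' but identifying it is not the same as overcoming it; a complete proof along your lines would either have to reproduce some Karhunen--Lo\`eve-type approximation anyway, or develop a theory of Malliavin-smooth cutoffs for rough path norms, neither of which is standard in this regime.
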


\smallskip

\noindent
\textbf{(2)} Next we derive some Gaussian or sub-Gaussian  type upper bounds for the density $p_t$ of the random variable $X^x_t$:
\begin{theorem}\label{thm:upper-bnd-density}
Let $X^x$ be the solution to equation \eqref{eq:sde-intro} driven by a $d$-dimensional fBm $B$ with Hurst parameter $H>1/4$, assume that  $V_1,\ldots ,V_d$ satisfy the  elliptic condition \eqref{eq:hyp-elliptic} and let $t\in(0,1]$. Then the density $p_t$ of $X_t^x$ satisfies the following inequality:
\begin{equation}\label{eq:exp-bound-irregular}
p_t(y) \leq c_{1} t^{-nH} \exp \left(-\frac{\vert y-x \vert^{(2H+1)\wedge 2}}{c_{2} t^{2H} }  \right), \; \;  \text{for all } y \in \R^n,
\end{equation}
for two strictly positive constants $c_1,c_2$.
\end{theorem}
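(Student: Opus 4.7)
The plan is to combine a Malliavin-calculus density representation with two separate quantitative ingredients: $L^p$-moment bounds on the Malliavin weight associated to $X_t^x$, which will produce the prefactor $t^{-nH}$, and a sub-Gaussian concentration estimate for $|X_t^x - x|$, which will produce the exponential decay. I expect the concentration step to be the main obstacle, especially in the rough regime $H\in(1/4,1/2)$.

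I would first invoke the classical integration-by-parts density formula (see, e.g., Nualart's book, Proposition~2.1.5): for each of the $2^n$ orthants $Q\subset\R^n$ anchored at $y$,
\[
p_t(y)\;=\;\pm\,\EE\!\bigl[\mathbf{1}_{\{X_t^x\in Q\}}\,H_{(1,\dots,n)}(X_t^x,1)\bigr],
\]
where $H_{(1,\dots,n)}(X_t^x,1)$ is the iterated Malliavin weight built from $DX_t^x$, higher derivatives and the inverse Malliavin matrix $\gamma_{X_t^x}^{-1}$. Choosing for $Q$ the orthant opposite to $x$ gives $\{X_t^x\in Q\}\subset\{|X_t^x-x|\ge |y-x|/\sqrt n\}$, and the Cauchy--Schwarz inequality then yields
\[
p_t(y)\;\le\;\PP\bigl(|X_t^x-x|\ge |y-x|/\sqrt n\bigr)^{1/2}\,\bigl\Vert H_{(1,\dots,n)}(X_t^x,1)\bigr\Vert_{L^2}.
\]

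For the Malliavin term, I would use the now-standard scalings $\Vert DX_t^x\Vert_{L^p(\Omega;\ch)}\lesssim t^H$ together with, under Hypothesis \ref{hyp:elliptic}, $\Vert \gamma_{X_t^x}^{-1}\Vert_{L^p}\lesssim t^{-2H}$ and their higher-order analogues. These are available in both the Young regime $H>1/2$ (\cite{HN,NS}) and the rough regime $H\in(1/4,1/2)$ (\cite{CLL,HT,CHLT}); plugging them into the iterated weight gives $\Vert H_{(1,\dots,n)}(X_t^x,1)\Vert_{L^2}\lesssim t^{-nH}$, which is precisely the prefactor in \eqref{eq:exp-bound-irregular}.

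The hard part is the concentration inequality $\PP(|X_t^x-x|\ge r)\le c\,\exp\bigl(-r^{(2H+1)\wedge 2}/(c\,t^{2H})\bigr)$. When $H\ge 1/2$, Young integration makes the map $\ch\ni h\mapsto\Phi(h)_t$ Lipschitz into $\R^n$ with operator norm of order $t^H$, so Borell's Gaussian isoperimetric inequality directly gives the quadratic exponent $r^2/t^{2H}$. In the rough regime $H\in(1/4,1/2)$ the It\^o map is only continuous in a rough-path topology; here I would combine the Cass--Litterer--Lyons integrability estimates on the Jacobian \cite{CLL} with the Fernique-type tail of the rough-path enhancement of $B$, and rescale $B^t_u:= t^{-H}B_{tu}$ to extract the sharp time-dependence $t^{2H}$. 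The two-scale structure of the rough-path lift, which is Gaussian at the first level and only sub-Gaussian at the second, is what forces the anomalous exponent $2H+1<2$. Gathering the three ingredients and absorbing the factor $\sqrt n$ into $c_2$ then yields \eqref{eq:exp-bound-irregular}.
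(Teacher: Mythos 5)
Your overall skeleton matches the paper's: Proposition~\ref{density} plus the norm estimate \eqref{Holder} give, after Cauchy--Schwarz,
\[
p_t(y)\leq c\,\PP\bigl(|X_t^x-x|\geq |y-x|\bigr)^{1/2}\,\|\gamma_t^{-1}\|_{n,2^{n+2}}^n\,\|\bd X_t^x\|_{n,2^{n+2}}^n,
\]
and then $\|\bd X_t^x\|_{m,p}\lesssim t^H$, $\|\gamma_t^{-1}\|_{m,p}\lesssim t^{-2H}$, and the concentration bound of Proposition~\ref{prop:exp-moments-rdes} close the argument. Two remarks on the details you sketch.

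First, the $\sqrt n$ in your orthant step is unnecessary: choosing $\sigma=\{i:\,y^i\geq x^i\}$ in Proposition~\ref{density} gives an event on which $|X_t^{x,i}-x^i|\geq |y^i-x^i|$ coordinatewise, hence $|X_t^x-x|\geq |y-x|$ directly. Also, the $t^{-2H}$ scaling of $\|\gamma_t^{-1}\|_{m,p}$ under ellipticity is not a black box from \cite{CLL,HT,CHLT}: those references give integrability but not this time-scaling. It is one of the new technical points of the paper and rests on the interpolation inequality of Proposition~\ref{interpolation} combined with the ellipticity condition and the moment bounds of Proposition~\ref{prop:moments-jacobian}.

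Second, and more seriously, your concentration step in the Young regime $H>1/2$ has a gap. Borell's isoperimetric inequality (in the form $|F(\omega+\crr h)-F(\omega)|\leq \sigma\|h\|_\ch$ implies Gaussian tails) needs a \emph{deterministic} Lipschitz constant $\sigma$. Here $X_t^x(B+\crr h)-X_t^x(B)=\int_0^1\langle\bd X_t^x(B+\epsilon\crr h),h\rangle_\ch\,d\epsilon$, and $\|\bd X_t^x\|_\ch$ is controlled by the Jacobian $\bj$, which has all moments but is \emph{not} almost surely bounded. So the It\^o map is only locally Lipschitz and Borell does not apply directly. A naive a priori Young estimate ($|X_t-x|\lesssim\|\mathbf B\|_{p\text{-var}}\vee\|\mathbf B\|^p_{p\text{-var}}$) then only gives tail exponent $2/p<2$, not the Gaussian exponent $2$. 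The paper instead uses the Cass--Litterer--Lyons greedy-partition count $N_{\alpha,t,p}$ (see \eqref{eq:def-tau-i}--\eqref{eq:concentration-N}), for which a Borell/isoperimetric argument \emph{is} applicable because the Cameron--Martin embedding $\bch\subset\cC^{\rho\text{-var}}$ (Proposition~\ref{prop:imbed-bar-H}) controls how much a Cameron--Martin shift can increase $N_{\alpha,t,p}$; this gives $\exp(-c n^{2/\rho})$ tails, with $2/\rho=2H+1$ in the rough regime and $2$ in the Young regime. Combining with the deterministic telescoping bound \eqref{eq:telescopic-increments-X} then yields Proposition~\ref{prop:exp-moments-rdes} uniformly for both regimes. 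If you want to keep a Borell-flavored argument for $H>1/2$, you should apply it to $N_{\alpha,t,p}$ rather than to $X_t^x$ itself.
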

Observe that we have put an emphasis in computing the correct exponents in all terms of relation  \eqref{eq:exp-bound-irregular}. Namely, the terms $t^{-nH}$ and $t^{2H}$ (respectively outside and inside the exponential terms) can be considered as optimal, since they correspond to what one obtains in the fractional Brownian case, i.e non-degenerate constant coefficients $V_1,\ldots ,V_d$ and $V_0\equiv 0$. As far as the exponent of $\vert y-x \vert$ within the exponential is concerned, the quadratic Gaussian term we get in the regular case (namely $H>1/2$) is also optimal, while the exponent $2H+1$ of the irregular case ($H<1/2$) is due to the poorer concentration properties obtained for the Jacobian of equation \eqref{eq:sde-intro}.

\smallskip

\noindent
\textbf{(3)} 
Finally, we complete this paper by studying the relationship between capacities of sets in $\R^{n}$ and hitting probabilities for equation \eqref{eq:sde-intro} seen as a system. Indeed, we are interested in solving a classical problem on potential theory for stochastic processes which is the following: can we relate the hitting probabilities of $X^x$ solution
to  equation \eqref{eq:sde-intro} with a Newtonian capacity? In other words, we wish to know if 
there exists $\alpha \in \R$ such that for all Borel sets $A\subset \R^n$ 
\begin{equation*}
\mathbb{P}(X^x(\R_+) \cap A \neq \varnothing)>0 \quad \Longleftrightarrow  \quad \text{Cap}_\alpha(A) >0 \ .
\end{equation*}

For the sake of readability, let us briefly recall the definition of Newtonian capacity:
for all Borel sets $A\subset \R^n$, we define $\mathcal{P}(A)$ to be the set of all probability measures
with compact support in $A$.
For 
$\mu\in\mathcal{P}(A)$, we let $\mathcal{E}_\alpha(\mu)$ denote the
$\alpha$-dimensional energy of $\mu$, that is,
\begin{equation}\label{eq:def-E-beta-mu}
   \mathcal{E}_\alpha(\mu)  := \iint {\rm K}_\alpha(\vert x-y\vert)\, \mu(dx)\,\mu(dy),
\end{equation}
where ${\rm K}_{\alpha}$ denotes the
$\alpha$-dimensional Newtonian kernel, that is, 
\begin{equation} \label{kernel}
	{\rm K}_\alpha(r) := 
	\begin{cases}
		r^{-\alpha}&\text{if $\alpha >0$},\\
		\log  ( N_0/r ) &\text{if $\alpha =0$},\\
		1&\text{if $\alpha<0$},
    \end{cases}
\end{equation}
where $N_0>0$ is a constant. For all $\alpha\in\R$ and Borel sets
$A\subset\R^n$, we then define the $\alpha$-dimensional capacity of $A$ as
\begin{equation}\label{eq:def-capacity}
    \text{Cap}_\alpha(A) := \left[ \inf_{\mu\in\mathcal{P}(A)}
   \mathcal{E}_\alpha(\mu) \right]^{-1},
\end{equation}
where by convention we set $1/\infty:=0$. In particular, it is easily seen from definitions~\eqref{eq:def-E-beta-mu}--\eqref{eq:def-capacity} that   for any $x\in\R^{n}$ we have $\capa_{\alpha}(\{x\})>0$ if and only if $\alpha<0$.

\smallskip

Let us now go back to our fBm situation: recall that for a
$n$-dimensional fractional Brownian motion $B=(B(t),t\geq0)$ with Hurst
parameter $H\in(0,1)$, the following is well-known (see e.g. \cite{Xiao} and the references therein):
\begin{equation}\label{eq:B-hits-points}
B\text{ hits points in }\mathbb{R}^{n}\text{ a.s.}\text{ if and only if
}n<\frac{1}{H}.
\end{equation}
Moreover, for all $0<a<b$, $\eta>0$, and any Borel set
$A\subset\mathbb{R}^{n}$, there exist constants $c_{3},c_{4}>0$ such that
\begin{equation*}
c_{3} \, \text{Cap}_{n-\frac{1}{H}}(A) \leq\mathbb{P}(B([a,b])\cap
A\neq\varnothing)\leq c_{4}\, \text{Cap}_{n-\frac{1}{H}-\eta}(A). 
\end{equation*}
As in the case of density functions, our aim is to obtain similar  bounds for the solution to equation \eqref{eq:sde-intro}, where $B$ is a fBm with $H>\frac14$. We shall get the following:

\begin{theorem}\label{thm:hitting-capacity-X}
Let $X^x$ be the solution to equation \eqref{eq:sde-intro} driven by a $d$-dimensional fBm $B$ with Hurst parameter $H>1/4$, and let $t\in(0,1]$. Fix $0<a<b\le 1$, $M >0$, and $\eta>0$
 Then whenever $V_1,\ldots ,V_d$ satisfy the elliptic condition \eqref{eq:hyp-elliptic}, there exists two strictly positive constants $c_5,c_6$ depending on $a,b,H,M,n, \eta$ such that for all compact sets $A\subseteq[-M\,,M]^n$,
            \begin{equation}\label{eq:bnd-hitting-regular}
       c_5\,  \textnormal{Cap}_{n-\frac{1}{H} }(A)  \le 
       \mathbb{P}\lp X^x([a,b]) \cap A  \neq\varnothing \rp
       \leq c_6\,  \textnormal{Cap}_{ n-\frac{1}{H}-\eta}(A).
            \end{equation}
\end{theorem}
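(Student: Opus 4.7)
The plan is to treat the two inequalities in \eqref{eq:bnd-hitting-regular} separately, following the by-now classical strategy developed in the context of the heat equation by Dalang--Khoshnevisan--Nualart: the upper bound will come from a covering argument combined with the one-point density estimate of Theorem \ref{thm:upper-bnd-density} and uniform H\"older estimates for $X^{x}$, while the lower bound will rest on a Paley--Zygmund second-moment computation that requires both a uniform lower bound for $p_{t}(y)$ on compact sets and a suitable estimate on the joint (two-point) density of $(X_{s}^{x}, X_{t}^{x})$.

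For the upper bound, I would cover a compact set $A\subset[-M,M]^{n}$ by balls $B(y_{k},r)$ of a small radius $r>0$. Using the standard rough-path moment bound $\mathbb{E}[|X_{t}^{x}-X_{s}^{x}|^{p}]\le C_{p}|t-s|^{pH}$ (which follows from Hypothesis \ref{hyp:regularity-V} and the integrability of the Jacobian) together with a Garsia--Rodemich--Rumsey argument, I would discretize $[a,b]$ with mesh $\delta\sim r^{1/H}$ so that $\sup_{|t-s|\le\delta}|X_{t}^{x}-X_{s}^{x}|\le r$ with overwhelming probability. Then
\[
\mathbb{P}\bigl(X^{x}([a,b])\cap B(y_{k},r)\neq\varnothing\bigr)\le \sum_{j}\mathbb{P}(X_{t_{j}}^{x}\in B(y_{k},2r))+\text{tail},
\]
and each summand is bounded by $c\,r^{n}$ thanks to Theorem \ref{thm:upper-bnd-density}, yielding $c\,r^{n-1/H}$ per ball. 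A standard application of Frostman's lemma then converts the resulting covering estimate into a bound by $c\,\textnormal{Cap}_{n-1/H-\eta}(A)$ (the loss $\eta$ being the price paid to pass from Hausdorff content to capacity).

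For the lower bound, I would fix $\mu\in\mathcal{P}(A)$ with $\mathcal{E}_{n-1/H}(\mu)<\infty$, take a smooth approximation of identity $\phi_{\epsilon}$ of mass one on $\mathbb{R}^{n}$, and consider the random variable
\[
J_{\epsilon}=\int_{a}^{b}\int_{A}\phi_{\epsilon}(X_{t}^{x}-y)\,\mu(dy)\,dt.
\]
By Paley--Zygmund, $\mathbb{P}(X^{x}([a,b])\cap A\neq\varnothing)\ge \mathbb{P}(J_{\epsilon}>0)\ge \mathbb{E}[J_{\epsilon}]^{2}/\mathbb{E}[J_{\epsilon}^{2}]$. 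The first moment $\mathbb{E}[J_{\epsilon}]$ converges to $\int_{a}^{b}\int_{A}p_{t}(y)\,\mu(dy)\,dt$, which is bounded below by a positive constant independent of $\mu$; this requires a uniform lower bound $p_{t}(y)\ge c(a,b,M)>0$ on $[a,b]\times[-M,M]^{n}$, which I would derive by combining Theorem \ref{thm:strict-positivity-intro}, the smoothness of $p_{t}$, and a compactness argument (or directly from a Malliavin-type Aronson-style lower bound under ellipticity). The second moment is bounded by
\[
\int_{A}\int_{A}\int_{a}^{b}\int_{a}^{b}p_{s,t}(y,z)\,ds\,dt\,\mu(dy)\,\mu(dz),
\]
where $p_{s,t}$ is the joint density of $(X_{s}^{x},X_{t}^{x})$, and the goal is to show
\[
\int_{a}^{b}\int_{a}^{b}p_{s,t}(y,z)\,ds\,dt\le c\,K_{n-1/H}(|y-z|),
\]
which combined with the upper bound on $\mathbb{E}[J_{\epsilon}^{2}]$ gives the desired $c\,\mathcal{E}_{n-1/H}(\mu)^{-1}$.

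The main technical obstacle will be this two-point bound on $p_{s,t}$. Expressing the joint density via the inverse Malliavin matrix of $(X_{s}^{x}, X_{t}^{x})$, one has to control the conditional Malliavin covariance of $X_{t}^{x}$ given $X_{s}^{x}$, and show that it is non-degenerate of order $|t-s|^{2H}$ uniformly in the parameters. In the Brownian case this follows from Markovianity; in the fBm setting it is considerably more delicate and relies on the sharp Norris-type lemmas for rough integrals (or on the reproducing-kernel estimates of \cite{CHLT,HT}) to get quantitative lower bounds for the conditional covariance. Once this estimate is established, a splitting of the integral according to the scale of $|t-s|$ relative to $|y-z|^{1/H}$, combined with the one-point bound of Theorem \ref{thm:upper-bnd-density} applied both to $X_{s}^{x}$ and to the conditional law of $X_{t}^{x}$ given $X_{s}^{x}$, yields the Newtonian-kernel bound and closes the argument.
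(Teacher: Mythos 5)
Your architecture matches the paper's quite closely: both halves of the bound are handled via the Dalang--Khoshnevisan--Nualart machinery (a covering/discretization argument producing a Hausdorff-content bound that converts to capacity for the upper direction, and a Paley--Zygmund second-moment argument against a measure $\mu\in\mathcal P(A)$ for the lower direction), with the ingredients you name (Theorem \ref{thm:upper-bnd-density}, the moment estimate $\mathbb{E}[|X_t^x-X_s^x|^p]\le C|t-s|^{pH}$, strict positivity plus compactness to get the first-moment lower bound) being exactly the conditions (i), (ii), \textbf{(A1)}, \textbf{(A2)} that the paper verifies. You also correctly identify the crux: a bound $\int_a^b\int_a^b p_{s,t}(y,z)\,ds\,dt\le c\,{\rm K}_{n-1/H}(|y-z|)$, which reduces to a quantitative non-degeneracy of order $(t-s)^{2H}$ of the conditional Malliavin covariance of $X_t^x-X_s^x$ given $\cf_s$.

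Where your sketch diverges, and where it remains genuinely incomplete, is precisely in that conditional non-degeneracy step. You propose to reach it via ``sharp Norris-type lemmas for rough integrals'' or the reproducing-kernel estimates of \cite{CHLT,HT}. Under the elliptic Hypothesis \ref{hyp:elliptic} no Norris-type lemma is needed (those are the tool for Hörmander/bracket-generating situations), and invoking one here would introduce an unnecessary loss. The paper's route is more elementary and, crucially, quantitatively sharp: it passes to the Malliavin calculus of the underlying Wiener process $W$ of the Volterra representation (Propositions \ref{prop:relation-D-DW}, \ref{prop:deriv-sdeW}, \ref{prop:int-parts-cond-W}), precisely because conditioning on $\cf_s$ then becomes an orthogonal projection onto $L^2([s,1])$, which is easy to manipulate. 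It then establishes scale-invariant interpolation inequalities on the subinterval $[s,t]$ (Lemmas \ref{kjh} and \ref{lem:interpol-st-less-1/2}, proved via an equivalence-of-Volterra-processes argument and rescaling) that directly produce the $(t-s)^{2H}$ lower bound on the conditional reduced covariance under ellipticity, and uses Besov-type rough-path norms so that the resulting bounds are smooth in the Malliavin sense (Proposition \ref{estimate bivar mall}). Your proposal names the right goal but does not supply this mechanism; as written the bivariate density estimate --- which you yourself call ``the main technical obstacle'' --- is asserted rather than proved, and the tool you gesture toward would not yield the clean scaling $(t-s)^{2H}$ you need.
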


Moreover, as a corollary of Theorem \ref{thm:hitting-capacity-X}, we easily get that if Hypothesis \ref{hyp:elliptic} is met, then if $n<\frac{1}{H}$
the process $X^x$ hits points in $\mathbb{R}^n$ with strictly positive probability, while if $n>\frac{1}{H}$ the process $X^x$ does
not hit points in $\mathbb{R}^n$ a.s.

\smallskip

Let us say a few words about the methodology we have followed in order to obtain the results above. Our computations lie into the landmark of stochastic analysis for Gaussian processes, and we try to apply general Malliavin calculus tools which yield global recipes in order to get strict positivity  \cite{Nu-flour} or upper bounds \cite[Chapter 2]{Nu06} for densities of random variables defined on the Wiener space. We also invoke the references \cite{DKN,DKN2}, which establish nice relationships between stochastic analysis and potential theory for processes. This being said, our technical efforts will mainly be focused on the following points:
\begin{itemize}
\item 
An accurate Karhunen-Loeve expansion of fBm which will enable us to obtain the strict positivity of the density $p_t$.
\item
A combination of rough paths estimates and a sharp analysis of some covariance matrices related to fBm in order to obtain our exponential upper bounds.
\item
A thorough analysis of bivariate densities for the hitting probabilities of $X^x$.
\end{itemize}
All those points will obviously be detailed in the next sections.

\smallskip

Here is how our article is structured: Section \ref{sec:preliminary-material} gathers some material on fBm and rough differential equations which prove to be useful in the sequel. Section \ref{sec:positivity-density} is devoted to establish criteria for the strict positivity of the density of $X_t^x$ and our Gaussian upper bounds for $p_t$ are handled in Section \ref{sec:upper-bounds}. Finally we get the bounds on hitting probabilities in Section \ref{sec:hitting-probabilities}, where in particular all the previous tools are used.

\smallskip

\noindent
\textbf{Notations:} Throughout this paper, unless otherwise specified, we use $|\cdot|$ for Euclidean norms and $\|\cdot\|_{L^p}$ for the $L^p$ norm with respect to the underlying probability measure $\mp$.

Consider a finite-dimensional vector space $V$. The space of $V$-valued H\"older continuous functions defined on $[0,1]$, with H\"older continuity exponent $\ga\in(0,1)$, will be denoted by $\cac^\ga(V)$, or just $\cac^\ga$ when this does not yield any ambiguity. For a function $g\in\cac^\ga(V)$ and $0\le s<t\le 1$, we shall consider the semi-norms
\begin{equation}\label{eq:def-holder-norms}
\|g\|_{s,t,\ga}=\sup_{s\le u<v\le t}\frac{|g_v-g_u|_{V}}{|v-u|^{\ga}},
\end{equation}
The semi-norm $\|g\|_{0,1,\ga}$ will simply be denoted by $\|g\|_{\ga}$.

Generic universal constants will be denoted by $c,C$ independently of their exact values.

\section{Preliminary material}\label{sec:preliminary-material}

Recall that a fractional Brownian motion $B$ is a $d$-dimensional centered Gaussian process with independent components $B^i$ such that $\EE[(B_t^i-B_s^i)^{2}]$ is given by \eqref{eq:var-increments-B}. Let us also point out that $B$ admits a representation of Volterra type, namely
\begin{equation}\label{eq:volterra-representation}
B_t^i= \iot K(t,u) \, dW_u^i, \quad i=1,\ldots,d,
\end{equation}
for a $d$-dimensional Wiener process $W$ and a kernel $K$ (whose exact expression is given by \eqref{eq:def-kernel-K} below) such that for any $t\in\ou$ we have $K(t,\cdot)\in L^2(\ou)$.   We  denote by $R$ the common covariance of the $B^i$, defined by 
\begin{equation}\label{eq:covariance}
R_{st}= \EE\lc  B_s^i \, B_t^i\rc = \int_0^{s} K(s,u) \, K(t,u) \, du
= \frac12 \lp |t|^{2H} + |s|^{2H} - |t-s|^{2H}\rp,
\end{equation}
for $s,t\in\ou$. In the remainder of the paper we assume that the process $B$ is realized on an abstract Wiener space $(\oom,\cf,\PP)$ with $\oom=\cac_0([0,1];\R^d)$. Namely, $\oom=\cac_0([0,1])$ is the Banach space of continuous functions
vanishing at $0$ equipped with the supremum norm, $\cf$ is the Borel sigma-algebra and $\PP$ is the unique probability measure on $\oom$ such that the canonical process $B=\{B_t=(B^1_t,\ldots,B^d_t), \; t\in [0,1]\}$ is a centered Gaussian process with covariance $R$ given by \eqref{eq:covariance}.

\subsection{Rough path above $\mathbf{B}$}

We consider here $B$ together with its iterated integrals as a rough path, and we refer to \cite{FV-bk,LQ} for further details on this concept. Let us just mention here a few basic facts.

\smallskip

For $N\in\mathbb{N}$, recall that the truncated algebra $T^{N}(\mathbb{R}%
^{d})$ is defined by 
$$
T^{N}(\mathbb{R}^{d})=\bigoplus_{m=0}^{N}(\mathbb{R}%
^{d})^{\otimes m},
$$ 
with the convention $(\mathbb{R}^{d})^{\otimes
0}=\mathbb{R}$. The set $T^{N}(\mathbb{R}^{d})$ is equipped with a straightforward
vector space structure, plus an operation $\otimes$ defined by
\[
\pi_{m}(g\otimes h)=\sum_{k=0}^{N}\pi_{m-k}(g)\otimes\pi_{k}(h),\qquad g,h\in
T^{N}(\mathbb{R}^{d}),
\]
where $\pi_{m}$ designates the projection on the $m$th tensor level. Then
$(T^{N}(\mathbb{R}^{d}),+,\otimes)$ is an associative algebra with unit
element $\mathbf{1} \in(\mathbb{R}^{d})^{\otimes0}$.

\smallskip

For $s<t$ and $m\geq2$, consider the simplex $\Delta_{st}^{m}=\{(u_{1}%
,\ldots,u_{m})\in\lbrack s,t]^{m};\,u_{1}<\cdots<u_{m}\} $, while the
simplices over $[0,1]$ will be denoted by $\Delta^{m}$. A continuous map
$\mathbf{x}:\Delta^{2}\rightarrow T^{N}(\mathbb{R}^{d})$ is called a
multiplicative functional if for $s<u<t$ one has $\mathbf{x}_{s,t}%
=\mathbf{x}_{s,u}\otimes\mathbf{x}_{u,t}$. An important example arises from
considering paths $x$ with finite variation: for $0<s<t$ we set
\begin{equation}
\mathbf{x}_{s,t}^{m}=\sum_{1\leq i_{1},\ldots,i_{m}\leq d}\biggl( \int%
_{\Delta_{st}^{m}}dx^{i_{1}}\cdots dx^{i_{m}}\biggr) \,e_{i_{1}}\otimes
\cdots\otimes e_{i_{m}}, \label{eq:def-iterated-intg}%
\end{equation}
where $\{e_{1},\ldots,e_{d}\}$ denotes the canonical basis of $\mathbb{R}^{d}%
$, and then define the \textit{signature} of $x$ as
\[
S_{N}(x):\Delta^{2}\rightarrow T^{N}(\mathbb{R}^{d}),\qquad(s,t)\mapsto
S_{N}(x)_{s,t}:=1+\sum_{m=1}^{N}\mathbf{x}_{s,t}^{m}.
\]
The function $S_{N}(x)$ for a smooth function $x$ will be our typical example of multiplicative functional. Let us stress the fact that those elements take values in the strict subset
$G^{N}(\mathbb{R}^{d})\subset T^{N}(\mathbb{R}^{d})$ given by the \emph{group-like}
elements
\[
G^{N}(\mathbb{R}^{d}) = \exp^{\oplus}\bigl(L^{N}(\mathbb{R}^{d})\bigr),
\]
where 
\[
L^{N}(\mathbb{R}^{d})=\mathbb{R}^d \oplus [ \mathbb{R}^d, \mathbb{R}^d] \oplus \cdots \oplus [ \mathbb{R}^d, [ \cdots , [ \mathbb{R}^d, \mathbb{R}^d] \cdots ]
\]
and  for two elements in $T^{N}(\mathbb{R}^{d})$, $[a,b]=a \otimes b - b\otimes a$ . This set is called free nilpotent group of step $N$, and is equipped with the classical Carnot-Caratheodory norm which we simply denote by $|\cdot|$. For a path $\mathbf{x}\in\cC([0,1],G^{N}(\R^d))$, the $p$-variation norm of $\mathbf{x}$ is defined to be
\begin{align*}
\|\mathbf{x}\|_{p-{\rm var}; [0,1]}=\sup_{\Pi \subset [0,1]}\left(\sum_i |\mathbf{x}_{t_i}^{-1}\otimes \mathbf{x}_{t_{i+1}}|^p\right)^{1/p}
\end{align*}
where the supremum is taken over all subdivisions $\Pi$ of $[0,1]$.

\smallskip

With these notions in hand, let us briefly define what we mean by geometric rough path (we refer to \cite{FV-bk,LQ} for a complete overview): for $p\ge 1$, an element $x:\ou\to G^{\lfloor p \rfloor}(\R^d)$ is said to be a geometric rough path if it is the $p$-var limit of a sequence $S_{\lfloor p \rfloor}(x^{m})$ of signatures of smooth functions $x^m$. In particular, it is an element of the space 
$$\cC^{p-{\rm var}; [0,1]}([0,1], G^{\lfloor p \rfloor}(\R^d))=\{\mathbf{x}\in \cC([0,1], G^{\lfloor p \rfloor}(\R^d)): \|\mathbf{x}\|_{p-{\rm var}; [0,1]}<\infty\}.
$$

\smallskip

Let us now turn to the fBm case: according to the considerations above, in order to prove that a lift of a $d$-dimensional fBm as a geometric rough path exists it is sufficient to build enough iterated integrals of $B$ by a limiting procedure. Towards this aim, a lot of the information concerning $B$ is encoded in the rectangular increments of the covariance function $R$ (defined by \eqref{eq:covariance}), which are given by
\begin{equation*}
R_{uv}^{st} \equiv \EE\lc (B_t^1-B_s^1) \, (B_v^1-B_u^1) \rc.
\end{equation*}
We then call 2-dimensional $\rho$-variation of $R$ the quantity
\begin{equation*}
V_{\rho}(R)^{\rho} \equiv 
\sup \lcl  
\lp \sum_{i,j} \lln R_{s_{i} s_{i+1}}^{t_{j}t_{j+1}} \rrn^{\rho} \rp^{1/\rho}; \, (s_i), (t_j)\in \Pi
\rcl,
\end{equation*}
where $\Pi$ stands again for the set of partitions of $\ou$. The following result is now well known for fractional Brownian motion:
\begin{proposition}\label{prop:fbm-rough-path}
For a fractional Brownian motion with Hurst parameter $H$, we have $V_{\rho}(R)<\infty$ for all $\rho\ge1/(2H)$. Consequently, for $H>1/4$ the process $B$ admits a lift $\mathbf{B}$ as a geometric rough path of order $p$ for any $p>1/H$.
\end{proposition}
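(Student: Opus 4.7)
The statement decomposes into two independent assertions: the analytic bound $V_{\rho}(R)<\infty$ for $\rho\ge 1/(2H)$, and the construction of the geometric rough path lift $\mathbf{B}$ when $H>1/4$. These are linked by the elementary observation that $\rho=1/(2H)<2$ precisely when $H>1/4$, which is the threshold above which 2D Young-type integration becomes available for building second-level iterated integrals.

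For the variation bound, I would start from the explicit identity
\begin{equation*}
R^{st}_{uv} = \tfrac{1}{2}\big[|t-u|^{2H}+|s-v|^{2H}-|t-v|^{2H}-|s-u|^{2H}\big],
\end{equation*}
and rewrite it, for disjoint intervals with $s<t\le u<v$, via two applications of the fundamental theorem of calculus as
\begin{equation*}
R^{st}_{uv} \,=\, H(2H-1)\int_u^v\!\!\int_s^t(y-x)^{2H-2}\,dx\,dy,
\end{equation*}
an expression whose sign is definite. In the regular case $H\ge 1/2$ this sign is nonnegative, so the grid sum of off-diagonal contributions $\sum_{i,j}R^{s_is_{i+1}}_{t_jt_{j+1}}$ telescopes to the total $R^{01}_{01}=1$. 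Diagonal contributions (overlapping rectangles) are then absorbed via the Cauchy--Schwarz bound $|R^{st}_{uv}|\le(t-s)^H(v-u)^H$ coming from the $L^2$-geometry of the Gaussian space. This yields $V_1(R)<\infty$ and hence $V_\rho(R)<\infty$ for all $\rho\ge 1\ge 1/(2H)$.

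The main analytic obstacle lies in the rough regime $H<1/2$: the covariance has oscillating sign on a general grid, and a bare Cauchy--Schwarz bound $|R^{st}_{uv}|^{1/(2H)}\le(t-s)^{1/2}(v-u)^{1/2}$ sums to an unbounded quantity over arbitrary refinements. Here one exploits the singular representation $|R^{st}_{uv}|=H(1-2H)\int\int(y-x)^{2H-2}dx\,dy$ together with H\"older's inequality at exponent $1/(2H)$ to bound the off-diagonal grid sum by a finite integral, while diagonal terms are dominated by $\sum_i(s_{i+1}-s_i)^{2H\cdot 1/(2H)}=\sum_i(s_{i+1}-s_i)=1$. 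Combining these two pieces yields $V_{1/(2H)}(R)<\infty$ in the sharp regime, which is the critical case.

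Finally, once $V_\rho(R)<\infty$ with $\rho<2$, the Friz--Victoir Gaussian rough path machinery produces the lift. One approximates $B$ by its piecewise linear interpolations $B^m$, forms the truncated signatures $\mathbf{B}^m=S_{\lfloor p\rfloor}(B^m)$, and uses a moment bound of the form
\begin{equation*}
\EE\big[|\mathbf{B}^m_{s,t}-\mathbf{B}^{m'}_{s,t}|^2\big] \le C\,V_\rho(R;[s,t]^2)^{2}
\end{equation*}
(established inductively over the tensor levels using 2D Young estimates together with Gaussian hypercontractivity on a fixed Wiener chaos) combined with a Kolmogorov-type criterion for rough paths to establish Cauchy convergence of $\mathbf{B}^m$ in $p$-variation. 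The limit $\mathbf{B}$ takes values in $G^{\lfloor p\rfloor}(\R^d)$ at each time as a limit of smooth signatures, which makes it a geometric $p$-rough path for any $p>1/H$.
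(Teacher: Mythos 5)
The paper's proof is a one-line citation to Friz--Victoir (their Proposition 15.5 and Chapter 15); your proposal unpacks that argument, so the underlying route is the same, just with details supplied. Your treatment of the case $H \in (1/4,1/2]$ identifies the right ingredients: the singular-integral representation $R^{st}_{uv}=H(2H-1)\int_u^v\int_s^t (y-x)^{2H-2}\,dx\,dy$ with definite sign on disjoint rectangles, the Cauchy--Schwarz control of diagonal and near-diagonal blocks, and the conclusion $V_{1/(2H)}(R)<\infty$; the off-diagonal/diagonal split does need a bit more care when the two partitions $(s_i)$ and $(t_j)$ differ, since rectangles can overlap without being equal, but the shape of the argument is correct. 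One genuine cautionary remark concerns $H>1/2$: your chain ``$\rho\ge 1\ge 1/(2H)$'' only gives finite $\rho$-variation for $\rho\ge 1$ and does not cover $\rho\in[1/(2H),1)$. In fact the proposition as stated is too strong in that range: for $\rho<1$ the 2D $\rho$-variation generically diverges under refinement -- already in the degenerate example $H=1$, where $R^{st}_{uv}=(t-s)(v-u)$, one has $\sum_i(s_{i+1}-s_i)^\rho\to\infty$ as the mesh shrinks. The sharp threshold is $\rho\ge\max(1,1/(2H))$, which is precisely what your argument (and Friz--Victoir) establish; this is harmless for the ``Consequently'' clause, which only requires some $\rho<2$ when $H>1/4$. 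Finally, your sketch of the lift construction (piecewise-linear approximation, inductive 2D Young estimates plus hypercontractivity, rough-path Kolmogorov) is the standard machinery, but the displayed moment inequality as written has a right-hand side that is independent of $m,m'$ and therefore cannot serve as a Cauchy criterion: it needs to involve the 2D $\rho$-variation of the covariance difference between $B^m$ and $B^{m'}$, and the exponent should grow with the tensor level (heuristically $V_\rho^k$ at level $k$) rather than being fixed at $2$.
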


\begin{proof}
The fact that $V_{\rho}(R)<\infty$ for all $\rho\ge1/(2H)$ is the content of \cite[Proposition 15.5]{FV-bk}. The implication on the rough path construction can also be found in \cite[Chapter 15]{FV-bk}.

\end{proof}

\subsection{Malliavin calculus tools}\label{sec:malliavin-tools}
Gaussian techniques are obviously essential in the analysis of densities for solutions to \eqref{eq:sde-intro}, and we proceed here to introduce some of them. These lines follow the classical analysis for Gaussian rough paths as explained in  \cite{FV-bk}.

\subsubsection{Wiener space associated to fBm}\label{sec:wiener-space}
Let $\mathcal{E}$ be the space of $\mathbb{R}^d$-valued step
functions on $[0,1]$, and $\mathcal{H}$  the closure of
$\mathcal{E}$ for the scalar product:
\[
\langle (\mathbf{1}_{[0,t_1]} , \cdots ,
\mathbf{1}_{[0,t_d]}),(\mathbf{1}_{[0,s_1]} , \cdots ,
\mathbf{1}_{[0,s_d]}) \rangle_{\mathcal{H}}=\sum_{i=1}^d
R(t_i,s_i),
\]
where $R$ is defined by \eqref{eq:covariance}.
Then if $(e_1,\ldots,e_d)$ designates the canonical basis of $\R^d$, one constructs an isometry $K^*_H: \ch \rightarrow  L^2([0,1])$  such that $K^*_H(\1_{[0,t]}\, e_{i}) = \1_{[0,t]}$  $K_H(t,\cdot)\,e_{i}$, 
where the kernel $K=K_H$ is given by 
\begin{eqnarray}\label{eq:def-kernel-K}
K(t,s)&=& c_H \, s^{\frac12 -H} \int_s^t (u-s)^{H-\frac 32} u^{H-\frac 12} \, du, \quad H>1/2  \\
K(t,s)&=& c_{H,1} \lp\frac{s}{t} \rp^{1/2-H} \, (t-s)^{H-1/2} 
+ c_{H,2} \, s^{1/2-H} \int_s^t (u-s)^{H-\frac 12} u^{H-\frac 32} \, du, \quad H\le 1/2, \notag
\end{eqnarray}
for some constants $c_H$, $c_{H,1}, c_{H,2}$, and verifies that $\mathbb{E}[B_s^{i}\, B_t^{i}]= \int_0^{s\land t} K(t,r) K(s,r)\, dr$. Moreover, let us observe that the isometry $K^*_H$ alluded to above can be represented 
in the following form by using fractional calculus: for $H>1/2$ we have
\begin{equation}\label{eq:def-K-star}
[K^* \vp]_t = \int_t^1 \vp_r \, \partial_r K(r,t) \, dr
=d_H \, t^{1/2-H} \lc  I_{1^-}^{H-1/2} \lp u^{H-1/2} \vp \rp \rc_t,
\end{equation}
while for $H \le 1/2$ it holds that
\begin{equation}\label{eq:def-K-star-less-half}
[K^* \vp]_t = K(1,t) \, \vp_t 
+ \int_t^1 \lp \vp_r- \vp_t \rp  \, \partial_r K(r,t) \, dr
=d_H \, t^{1/2-H} \lc  D_{1^-}^{1/2-H} \lp u^{H-1/2} \vp \rp \rc_t.
\end{equation}
When $H>\frac{1}{2}$ it can be shown that $L^{1/H} ([0,1])
\subset \mathcal{H}$, and when $H<\frac{1}{2}$ one has 
$C^\gamma\subset \mathcal{H}\subset L^2([0,1])$
for all $\gamma>\frac{1}{2}-H$. We shall also use the following representations of the inner product in $\ch$: For $H>1/2$ and $\phi,\psi\in\ch$, we have 
\begin{equation}\label{eq:inner-pdt-H-smooth}
\langle \phi , \psi \rangle_{\mathcal{H}}=H(2H-1)\int_0^1 \int_0^1
| s-t |^{2H-2} \, \langle \phi_{s} , \psi_{t}\rangle_{\mathbb{R}^d} \, ds dt.
\end{equation}

\smallskip

In order to deduce that $(\oom,\ch,\PP)$ defines an abstract Wiener space, we remark that $\ch$ is continuously and densely embedded in $\oom$. To this aim define first the space $\bch$ as 
\begin{equation*}
\bch = \lcl  
\ell:[0,1]\to\R^d ; \, \ell_{t} = \int_{0}^{t} K(t,u) \, \phi_{u} \, du,
\text{ with } \phi\in L^{2}([0,1])
\rcl,
\end{equation*}
where $K$ is defined by \eqref{eq:def-kernel-K}. It is worth noticing at this point that the space $\bch$ yields the accurate notion of Cameron-Martin space in the fBm context (for Brownian motion one obtains $\ch=L^2(\ou)$ and $\bch=W^{1,2}(\ou)$). Then one proves that the operator $\crr:=\crr_H :\ch \rightarrow \bch$ given by
\begin{equation}\label{eq:def-R}
\crr \psi := \int_0^\cdot K(\cdot,s) [K^*_H \psi](s)\, ds
\end{equation}
defines a dense and continuous embedding from $\ch$ into $\oom$; this is due to the fact that $\crr_H \psi$ is $H$-H\"older continuous (for details, see \cite[p. 399]{NS}).  Let us now quote from \cite[Chapter 15]{FV-bk} a result relating the 2-d regularity of $R$ and the regularity of $\bch$.
\begin{proposition}\label{prop:imbed-bar-H}
Let $B$ be a fBm with Hurst parameter $H\in(1/4,1/2)$. Then one has $\bch\subset \cac^{\rho-{\rm var}}$ for $\rho>(H+1/2)^{-1}$. Furthermore, the following quantitative bound holds:
\begin{equation*}
\Vert h \Vert_{\bch} \ge \frac{\Vert h \Vert_{\rho-{\rm var}}}{(V_\rho(R))^{1/2}}.
\end{equation*}
\end{proposition}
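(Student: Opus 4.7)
The plan is to leverage the isometric structure provided by $\crr$ and to run a duality argument in $\ch$. By \eqref{eq:def-R}, $\crr$ factors through the $L^{2}$-isometry $K^{*}_{H}$, so every $h\in\bch$ admits a representation $h=\crr\psi$ with $\|h\|_{\bch}=\|\psi\|_{\ch}$. The cornerstone of the argument is the identity
\begin{equation*}
h_{t}-h_{s}=\langle \mathbf{1}_{[s,t]},\psi\rangle_{\ch},\qquad 0\leq s<t\leq 1,
\end{equation*}
which I would verify by first computing $K^{*}_{H}\mathbf{1}_{[0,t]}(u)=K(t,u)\mathbf{1}_{[0,t]}(u)$ directly from \eqref{eq:def-K-star} (resp.\ \eqref{eq:def-K-star-less-half}) when $H>1/2$ (resp.\ $H\leq 1/2$), and then combining with the $L^{2}$-isometry of $K^{*}_{H}$ and the explicit form of $\crr\psi$.

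For the quantitative inequality, I would fix a partition $\Pi=\{t_{i}\}_{i=0}^{N}$ of $\ou$, denote $S=\sum_{i}|h_{t_{i+1}}-h_{t_{i}}|^{\rho}$, and introduce the dual sequence $a_{i}=\operatorname{sgn}(h_{t_{i+1}}-h_{t_{i}})\,|h_{t_{i+1}}-h_{t_{i}}|^{\rho-1}$ with associated step function $f=\sum_{i}a_{i}\mathbf{1}_{[t_{i},t_{i+1}]}$. The cornerstone identity and Cauchy-Schwarz in $\ch$ then yield $S=\langle f,\psi\rangle_{\ch}\leq \|f\|_{\ch}\|\psi\|_{\ch}$. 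The last step controls $\|f\|_{\ch}$ through the 2D variation of $R$: expanding
\begin{equation*}
\|f\|_{\ch}^{2}=\sum_{i,j}a_{i}a_{j}\,R_{t_{i}t_{i+1}}^{t_{j}t_{j+1}}
\end{equation*}
and applying H\"older with conjugate exponents $\rho$ and $\rho/(\rho-1)$, together with the identity $\sum_{i}|a_{i}|^{\rho/(\rho-1)}=S$, delivers $\|f\|_{\ch}^{2}\leq V_{\rho}(R)\cdot S^{2(\rho-1)/\rho}$. Combining the inequalities and simplifying the powers of $S$ gives $S^{1/\rho}\leq V_{\rho}(R)^{1/2}\|h\|_{\bch}$, so the announced quantitative bound follows upon taking the supremum over $\Pi$.

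The main obstacle is really in justifying the sharp range $\rho>(H+1/2)^{-1}$: Proposition \ref{prop:fbm-rough-path} only guarantees $V_{\rho}(R)<\infty$ for $\rho\geq 1/(2H)$, which for $H<1/2$ is strictly larger than $(H+1/2)^{-1}$. The improvement to the smaller threshold is a genuine complementary-Young-regularity phenomenon for the Cameron-Martin space and does not follow from the duality step alone. To obtain it I would appeal to the fractional-calculus representation \eqref{eq:def-K-star-less-half} of $K^{*}_{H}$, rewriting the increment $h_{t}-h_{s}$ in terms of Riemann-Liouville operators acting on $\phi=K^{*}_{H}\psi\in L^{2}(\ou)$; a careful integration by parts then upgrades the naive H\"older exponent $H$ of $h$ (obtained from the bare estimate $|h_{t}-h_{s}|\leq\|\mathbf{1}_{[s,t]}\|_{\ch}\|\psi\|_{\ch}=(t-s)^{H}\|h\|_{\bch}$) to the sharp H\"older exponent $H+1/2$, which translates into finite $\rho$-variation for any $\rho>(H+1/2)^{-1}$. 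This refined argument is carried out in \cite[Chapter 15]{FV-bk}, which is the reference the authors quote.
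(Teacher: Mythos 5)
The paper offers no proof of its own: it simply quotes \cite[Chapter 15]{FV-bk}. Your proposal therefore amounts to reconstructing the cited material, and the first part of it does so faithfully. The identity $h_t-h_s=\langle\mathbf{1}_{[s,t]},\psi\rangle_{\ch}$ (which follows at once from $h=\crr\psi$ and the stated action of $K^*_H$ on indicators), the dual test function $f=\sum_i a_i\mathbf{1}_{[t_i,t_{i+1}]}$ with $a_i=\operatorname{sgn}(\delta h_i)|\delta h_i|^{\rho-1}$, the Cauchy--Schwarz step $S\le\|f\|_{\ch}\|\psi\|_{\ch}$, and the H\"older bound $\|f\|_{\ch}^2\le V_\rho(R)\,S^{2(\rho-1)/\rho}$ are exactly the Friz--Victoir duality argument, and they yield the quantitative inequality for every $\rho$ with $V_\rho(R)<\infty$, i.e. for $\rho\ge 1/(2H)$. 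You also correctly observe that this leaves a gap: for $H<1/2$ one has $(H+1/2)^{-1}<1/(2H)$, and the duality step says nothing in the intermediate range.

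Where your proposal goes wrong is in the suggested fix. You claim that a fractional-calculus computation ``upgrades the naive H\"older exponent $H$ of $h$ to the sharp H\"older exponent $H+1/2$.'' This is false, and the paper itself says so explicitly just after the proposition: \emph{``for fBm we have $\bch\subset\cac^{\rho-\mathrm{var}}$ for $\rho>(H+1/2)^{-1}$ while we only have $\bch\subset\cac^{H}$.''} The bare estimate $|h_t-h_s|\le\|\mathbf{1}_{[s,t]}\|_{\ch}\|h\|_{\bch}=(t-s)^{H}\|h\|_{\bch}$ is in fact sharp in the H\"older scale: taking $\psi$ proportional to $\mathbf{1}_{[s,t]}$ gives an $h$ with $\|h\|_{\bch}=1$ and $h_t-h_s=|t-s|^{H}$ exactly, so there is no room to improve the H\"older exponent past $H$. (The Brownian case $H=1/2$ is the clearest illustration: $\bch^{W}=W^{1,2}$ is only $1/2$-H\"older, not Lipschitz, yet it has finite $1$-variation.) The true source of the improvement is a Sobolev--Slobodeckij (or Besov) inclusion: writing $\bch=\crr\ch=\{ \int_0^\cdot K(\cdot,u)\phi(u)\,du:\phi\in L^2\}$, one shows $\bch\hookrightarrow W^{\delta,2}$ for all $\delta<H+\tfrac12$, and then uses the Besov--variation embedding $W^{\delta,2}\hookrightarrow\cac^{\rho-\mathrm{var}}$ for $\rho>1/\delta$. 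The extra $1/2$ enters through the $L^2$-integrability of $\phi$ (an ``averaged'' gain), which precisely does \emph{not} convert into a pointwise H\"older gain. So the route to the sharp threshold is a variation-scale Sobolev embedding, not an improved modulus of continuity, and the sentence asserting $(H+1/2)$-H\"older regularity should be replaced by that embedding argument (which is indeed the one carried out in \cite[Chapter 15]{FV-bk}).
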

As the reader might have observed, there is a substantial gain in talking about $p$-variations instead of H\"older norms in this context. Indeed, for fBm we have $\bch\subset \cac^{\rho-{\rm var}}$ for $\rho>(H+1/2)^{-1}$ while we only have $\bch\subset \cac^{H}$. This means that functions in $\bch$ are more than twice as regular in terms of $p$-variations than in terms of H\"older norms. Furthermore, an integral of the form $\int h \, dB$ can be interpreted in the Young sense by means of $p$-variation techniques.

\smallskip

Let us close this section by pointing out an implication of Volterra's representation of fBm~\eqref{eq:volterra-representation} in terms of filtrations. Indeed, it is readily checked that $\cf_t\equiv\si(\{B_s; \, 0\le s\le t\})$ can also be expressed as $\cf_t=\si(\{W_s; \, 0\le s\le t\})$. This filtration will be important in the sequel.

\subsubsection{Scale invariant inequalities}

The following  inequalities, in particular the lower bounds, shall be used several times throughout the text. They show that one can replace the $\mathcal{H}$-norm that may be difficult to estimate by simpler quantities while keeping the correct scaling in time.

\begin{proposition}\label{interpolation}
Let $\ch$ be the Hilbert space introduced at Section \ref{sec:wiener-space}, depending on the Hurst parameter $H\in(0,1)$. Then:

\begin{itemize}
\item Asume $H>1/2$. Let $\gamma > H-1/2$. There exist constants $c_1,c_2 >0$ such that for every continuous $f \in \mathcal{H}$, and $t \in (0,1]$,
\[
c_1 t^{2H} \frac{\min_{[0,1]}  |f|^4}{ \|f\|^2_{\infty}+\| f \|_\gamma^2} \le \| f \mathbf{1}_{[0,t]} \|_\mathcal{H}^2 \le c_2 t^{2H} \| f \|^2_\infty.
\]
\item Assume $H \le 1/2$ and let $\gamma > 1/2-H$. There exist  constants $c_1,c_2 >0$ such that for every $f \in C^\gamma$,  and $t \in (0,1]$,
\[
c_1 t^{2H}\min_{[0,1]} |f|^2  \le  \| f \mathbf{1}_{[0,t]}  \|_\mathcal{H}^2\le c_2 t^{2H} (\| f\|_\gamma^2+\|f \|_\infty^2) .
\]
\end{itemize}
\end{proposition}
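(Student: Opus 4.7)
The plan is to treat the upper and lower bounds in each regime of $H$ separately, with the upper bounds coming from routine computations and the lower bounds requiring a duality/localization argument.

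\textbf{Upper bounds.} When $H>1/2$ I plug $\phi=\psi=f\mathbf{1}_{[0,t]}$ into \eqref{eq:inner-pdt-H-smooth}, dominate $\langle f_s,f_r\rangle_{\mathbb{R}^d}\le \|f\|_\infty^2$ pointwise, and compute $H(2H-1)\int_0^t\int_0^t|s-r|^{2H-2}\,ds\,dr = t^{2H}$. When $H\le 1/2$ I start from $\|f\mathbf{1}_{[0,t]}\|_\mathcal{H}^2=\|K^*(f\mathbf{1}_{[0,t]})\|_{L^2}^2$ and use \eqref{eq:def-K-star-less-half} to obtain, for $s\in[0,t]$,
\[
[K^*(f\mathbf{1}_{[0,t]})](s)=K(t,s) f(s)+\int_s^t (f(r)-f(s))\, \partial_r K(r,s)\, dr.
\]
Splitting $\|K^*(f\mathbf{1}_{[0,t]})\|_{L^2}^2$ via $(a+b)^2\le 2a^2+2b^2$ and using $\int_0^t K(t,s)^2\, ds = t^{2H}$ produces a first contribution of $2\|f\|_\infty^2\, t^{2H}$. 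The second contribution is bounded by $\|f\|_\gamma^2$ times a kernel integral that, thanks to the singularity $(r-s)^{H-3/2}$ of $\partial_r K(r,s)$ and the hypothesis $\gamma>1/2-H$, converges and scales like $t^{2H+2\gamma}\le t^{2H}$ on $(0,1]$.

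\textbf{Lower bound for $H\le 1/2$.} Keeping the same representation, I retain only the local piece $K(t,s) f(s)$, whose Euclidean norm is at least $K(t,s)\, m$, and control the Hölder remainder pointwise by $C\,\|f\|_\gamma (t-s)^{\gamma+H-1/2}$. Since $K(t,s)\sim c(t-s)^{H-1/2}$ as $s\uparrow t$, the main term dominates the remainder on a window $s\in[t-\varepsilon,t]$ as soon as $\varepsilon^\gamma\le c\, m/\|f\|_\gamma$. Integrating the resulting pointwise lower bound $|[K^*(f\mathbf{1}_{[0,t]})](s)|^2\ge c\, m^2 (t-s)^{2H-1}$ over this window yields $c\, m^2\, \varepsilon^{2H}$, and choosing $\varepsilon=t$ when $t$ is below the threshold and a fixed $\varepsilon_0$ otherwise (together with an elementary monotonicity observation to handle the large-$t$ regime) produces $c_1\, t^{2H} m^2$ as stated.

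\textbf{Lower bound for $H>1/2$.} Here I apply Cauchy--Schwarz with the vector-valued test function $g(s)=\langle f(s),e\rangle\, e\, \mathbf{1}_{[0,t]}(s)$ for $e=f(s_\ast)/|f(s_\ast)|$, which reduces the inequality to the scalar estimate $\|f\mathbf{1}_{[0,t]}\|_\mathcal{H}\ge \|\langle f,e\rangle\, \mathbf{1}_{[0,t]}\|_\mathcal{H}$. Hölder continuity of $f$ combined with $\min|f|\ge m$ forces $\langle f(s),e\rangle\ge m/2$ on a window of size $\tau\asymp (m/\|f\|_\gamma)^{1/\gamma}$, on which the scalar positive function admits the clean bound $c\, m^2 \tau^{2H}$ through the positivity of $\partial_r K(r,s)$ for $H>1/2$. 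The expected obstacle is to extend this bound to the full interval $[0,t]$ when $\tau<t$: due to the non-local nature of the inner product \eqref{eq:inner-pdt-H-smooth}, one must compensate cross terms of magnitude $\|f\|_\infty^2\,\tau\, t^{2H-1}$, and balancing these against the useful signal $m^2\tau^{2H}$ is precisely what forces the weakened form $m^4/(\|f\|_\infty^2+\|f\|_\gamma^2)$ appearing in the statement.
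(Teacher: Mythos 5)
Your upper bounds are sound. The $H>1/2$ upper bound is exactly the paper's computation. For $H\le 1/2$ you use the explicit $K^*$ representation \eqref{eq:def-K-star-less-half} and split into a local term and a H\"older-remainder term; the paper instead invokes the Young-integral identity $\langle h_1,h_2\rangle_\ch=\int h_1\,d\crr h_2$ together with the $p$-variation duality and Proposition~\ref{prop:imbed-bar-H}. Both routes give the upper bound, and yours is more concrete at the cost of having to track the full kernel $\partial_r K(r,s)$ (including the non-leading $c_{H,2}$ piece near $s=0$).

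The lower bounds, however, contain genuine gaps, and the missing idea is precisely the one the paper leans on: the self-similarity rescaling $u\mapsto tu$, which pulls out the exact factor $t^{2H}$ and reduces the claim to a \emph{fixed-interval} interpolation inequality on $[0,1]$. The paper then discharges the fixed-interval inequality by citing \cite[Lemma 4.4]{BH} when $H>1/2$, and by the continuous embedding $\ch\subset L^2([0,1])$ when $H\le 1/2$. Your approach tries to work directly on $[0,t]$ and runs into the following problem in both regimes. When $H\le 1/2$, you isolate the local piece $K(t,s)f(s)$ on a window $[t-\varepsilon,t]$, but the window size where this term dominates is $\varepsilon\asymp(m/\|f\|_\gamma)^{1/\gamma}$, which can be arbitrarily small compared with $t$; the resulting bound $c\,m^2\varepsilon^{2H}$ is then strictly weaker than the claimed $c_1\,m^2 t^{2H}$, and the "elementary monotonicity observation" you invoke to bridge the gap does not hold: for $H<1/2$, $t\mapsto\|f\mathbf{1}_{[0,t]}\|_\ch$ is \emph{not} monotone in the interval, so one cannot lower-bound the norm on $[0,t]$ by the norm on a shorter window. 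The paper's $L^2$-embedding-plus-rescaling argument avoids this entirely and, importantly, delivers the uniform constant without any $\|f\|_\gamma$ dependence. When $H>1/2$, the Cauchy--Schwarz reduction to the scalar function $\langle f,e\rangle$ is fine, and you are right that the difficulty is the off-window cross terms; but you do not actually show how the balance closes. The signed cross terms can eat up the positive diagonal block, and the quantitative estimate that makes the trade-off work is exactly the content of \cite[Lemma 4.4]{BH} --- the paper does not reprove it, it cites it and then rescales. Without either reproducing that lemma's argument carefully or invoking it, the step remains a sketch, not a proof.

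In short: adopt the rescaling as the first move (compute the Jacobian factor $t^{2H}$ once and for all via the covariance identity $R(ta,tb)=t^{2H}R(a,b)$, equivalently via \eqref{eq:inner-pdt-H-smooth} when $H>1/2$); then the lower bounds reduce to fixed-time statements, for which the paper cites \cite[Lemma 4.4]{BH} ($H>1/2$) and the $L^2$ embedding ($H\le 1/2$). Your local-window arguments can substitute for neither of these as written.
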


\begin{proof}
We first assume $H>1/2$. The inequality $ \| f \mathbf{1}_{[0,t]} \|_\mathcal{H}^2 \le c_2 t^{2H} \| f \|^2_\infty$ is a straightforward consequence of $\eqref{eq:inner-pdt-H-smooth}$. The inequality
\[
c_1 \frac{\min_{[0,1]}  |f|^4}{ \|f\|^2_{\infty}+\| f \|_\gamma^2} \le \| f \|_\mathcal{H}^2
\]
is proved in  \cite[Lemma 4.4]{BH}. For $t\in(0,1]$, this inequality can be rescaled as follows,
\begin{align*}
 \| f \mathbf{1}_{[0,t]} \|_\mathcal{H}^2 &= H(2H-1)\int_0^t \int_0^t
| u-v |^{2H-2} \langle f(u), f(v)\rangle du dv \\
 &= H(2H-1)  t^{2H}\int_0^1 \int_0^1
| u-v |^{2H-2} \langle f(tu), f(tv)\rangle du dv  \\
& \ge c_1 t^{2H} \frac{\min_{[0,1]}  |f_t|^4}{ \|f_t\|^2_{\infty}+\| f_t \|_\gamma^2}\\
& \ge  c_1 t^{2H} \frac{\min_{[0,1]}  |f|^4}{ \|f\|^2_{\infty}+\| f \|_\gamma^2},
\end{align*}
where $f_t(u)=f(tu)$. This proves our claim for $H>1/2$.

\smallskip

We now assume $H \le 1/2$. The fact that $  \| f \|_\mathcal{H}^2 \ge c_1 \| f \|_2^2 \ge c_1 \min_{[0,1]} |f|^2 $ is well known and the inequality easily rescales as above. The last inequality to prove is the upper bound. It is pointed in  \cite{NS} that we have, for any $h_1,h_2 \in\ch$,
\[
\langle h_1, h_2 \rangle_\ch =\int_0^1 h_1 d \crr h_2,
\]
where the right hand side is understood in the Young sense and $\crr$ is the isometry going from $\ch$ to $\bch$. Hence, if $p^{-1}+q^{-1}>1$ and $p>H^{-1}$, $q> (1/2+H)^{-1}$ we have
\begin{align*}
| \langle h_1, h_2 \rangle_\ch | &\le C( \| h_1\|_{p-var} +\| h_1\|_{\infty} ) \| \crr h_2 \|_{q-var}.
\end{align*}
We now use Proposition \ref{prop:imbed-bar-H} to get the bound
\[
 \| \crr h_2 \|_{q-var} \le C  \| \crr h_2 \|_{\bar{\ch}} =C \| h_2 \|_\ch.
\]
This proves that
\[
 \| f   \|_\mathcal{H}^2\le c_2  (\| f\|_\gamma^2+\|f \|_\infty^2)
\]
Again, this inequality easily rescales on the time interval $[0,t]$.
\end{proof}

\subsubsection{Malliavin calculus for $B$}
A $\mathcal{F}_{1}$-measurable real
valued random variable $F$ is said to be cylindrical if it can be
written, for a given $m\ge 1$, as
\begin{equation*}
F=f\lp  B_{t_1},\ldots,B_{t_m}\rp,
\quad\mbox{for}\quad
0\le t_1<\cdots<t_m \le 1,
\end{equation*}
where $f:\mathbb{R}^m \rightarrow \mathbb{R}$ is a $C_b^{\infty}$ function. The set of cylindrical random variables is denoted by $\mathcal{S}$. 

\smallskip

The Malliavin derivative is defined as follows: for $F \in \mathcal{S}$, the derivative of $F$ in the direction $h\in\ch$ is given by
\[
\mathbf{D}_h F=\sum_{i=1}^{m}  \frac{\partial f}{\partial
x_i} \left( B_{t_1},\ldots,B_{t_m}  \right) \, h_{t_i}.
\]
More generally, we can introduce iterated derivatives. If $F \in
\mathcal{S}$, we set
\[
\mathbf{D}^k_{h_1,\ldots,h_k} F = \mathbf{D}_{h_1} \ldots\mathbf{D}_{h_k} F.
\]
For any $p \geq 1$, it can be checked that the operator $\mathbf{D}^k$ is closable from
$\mathcal{S}$ into $\mathbf{L}^p(\oom;\ch^{\otimes k})$. We denote by
$\mathbb{D}^{k,p}(\ch)$ the closure of the class of
cylindrical random variables with respect to the norm
\[
\left\| F\right\| _{k,p}=\left( \mathbb{E}\left[|F|^{p}\right]
+\sum_{j=1}^k \mathbb{E}\left[ \left\| \mathbf{D}^j F\right\|
_{\ch^{\otimes j}}^{p}\right] \right) ^{\frac{1}{p}},
\]
and $\mathbb{D}^{\infty}(\ch)=\cap_{p \geq 1} \cap_{k\geq 1} \mathbb{D}^{k,p}(\ch)$.

\smallskip

Estimates of Malliavin derivatives are crucial in order to get information about densities of random variables, and Malliavin matrices as well as non-degenerate random variables will feature importantly in the sequel:
\begin{definition}\label{non-deg}
Let $F=(F^1,\ldots , F^n)$ be a random vector whose components are in $\mathbb{D}^\infty(\ch)$. Define the Malliavin matrix of $F$ by
\begin{equation} \label{malmat}
\gamma_F=(\langle \mathbf{D}F^i, \mathbf{D}F^j\rangle_{\ch})_{1\leq i,j\leq n}.
\end{equation}
Then $F$ is called  {\it non-degenerate} if $\gamma_F$ is invertible $a.s.$ and
$$(\det \gamma_F)^{-1}\in \cap_{p\geq1}L^p(\Omega).$$
\end{definition}
\noindent
It is a classical result that the law of a non-degenerate random vector $F=(F^1, \ldots , F^n)$ admits a smooth density with respect to the Lebesgue measure on $\mr^n$. Furthermore, the following integration by parts formula allows to get more quantitative estimates:
\begin{proposition} \cite[Proposition 2.1.4]{Nu06} \label{intbyparts}
Let $F=(F^1,\ldots ,F^n)$ be a non-degenerate random vector as in Definition \ref{non-deg}. 
Let $G \in \md^{\infty}$ and $\varphi$ be a function in the space $C_p^\infty(\R^n)$. Then for any multi-index $\alpha \in \{1,2,\ldots ,n\}^k$, $k \geq 1$, there exists
an element $H_{\alpha}(F,G) \in \md^{\infty}$ such that
\begin{equation*}
\me[\partial_{\alpha} \varphi(F) G] =\me[\varphi(F) H_{\alpha}(F,G)],
\end{equation*}
Moreover, the elements $H_{\alpha}(F,G)$ are recursively given by
\begin{equation}\label{eq:recursive-H-alpha}
H_{(i)}(F,G) =\sum_{j=1}^n \delta\left( G (\gamma^{-1}_F)^{ij} \, \bd F^j \right) 
\quad\text{and}\quad
H_{\alpha}(F,G)= H_{\alpha_k}(F,H_{(\alpha_1,\ldots ,\alpha_{k-1})}(F,G)),
\end{equation}
and for $1 \leq p<q<\infty$ we have
\begin{equation} \label{Holder}
\Vert  H_{\alpha}(F,G) \Vert_p \leq c_{p,q} \Vert \gamma^{-1}_F \, \bd F\Vert^k_{k, 2^{k-1}r}\Vert G\Vert^k_{k, q},
\end{equation}
 where $\frac1p=\frac1q+\frac1r$.
\end{proposition}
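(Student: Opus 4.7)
The plan is to combine two fundamental tools of Malliavin calculus: the chain rule for $\bd$ applied to the composite random variable $\vp(F)$, and the duality between $\bd$ and the Skorohod integral $\delta$. The non-degeneracy hypothesis on $F$ enters through the $\md^\infty$-invertibility of $\gamma_F$, which lets one express each partial derivative $(\partial_i\vp)(F)$ as an $\ch$-inner product involving $\bd(\vp(F))$ and a random element of $\ch$; duality then converts the resulting expectation into the announced integration by parts formula.

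I would first handle the base case $k=1$. The chain rule gives $\bd(\vp(F)) = \sum_{j=1}^n (\partial_j\vp)(F)\,\bd F^j$, and taking the $\ch$-inner product with $\bd F^i$ produces the system
\begin{equation*}
\langle \bd(\vp(F)),\,\bd F^i\rangle_{\ch} \,=\, \sum_{j=1}^n (\partial_j\vp)(F)\,(\gamma_F)_{ji}.
\end{equation*}
Inverting by $\gamma_F^{-1}$, multiplying by $G$, taking expectation, and applying the duality $\me[\langle \bd U, v\rangle_{\ch}] = \me[U\,\delta(v)]$ yields the desired formula with
\begin{equation*}
H_{(i)}(F,G) \,=\, \sum_{j=1}^n \delta\!\left(G\,(\gamma_F^{-1})^{ij}\,\bd F^j\right).
\end{equation*}
The admissibility of this integrand for $\delta$ follows from $G,F^j\in\md^\infty$ together with $(\gamma_F^{-1})^{ij}\in\md^\infty$, the latter being a consequence of Cramer's rule and the hypothesis $(\det\gamma_F)^{-1}\in\bigcap_p L^p$.

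The extension to an arbitrary multi-index $\alpha=(\alpha_1,\ldots,\alpha_k)$ proceeds by induction on $k$. Writing $\partial_\alpha\vp = \partial_{(\alpha_1,\ldots,\alpha_{k-1})}(\partial_{\alpha_k}\vp)$ and applying the inductive hypothesis to the multi-index $(\alpha_1,\ldots,\alpha_{k-1})$ and the function $\partial_{\alpha_k}\vp\in C_p^\infty(\R^n)$ gives
\begin{equation*}
\me[\partial_\alpha\vp(F)\,G] \,=\, \me[(\partial_{\alpha_k}\vp)(F)\,H_{(\alpha_1,\ldots,\alpha_{k-1})}(F,G)].
\end{equation*}
A final application of the base case, now with the weight $H_{(\alpha_1,\ldots,\alpha_{k-1})}(F,G)\in\md^\infty$ playing the role of $G$, delivers the recursion \eqref{eq:recursive-H-alpha}.

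For the quantitative estimate \eqref{Holder}, the strategy is to iterate Meyer's inequality $\|\delta(u)\|_{L^p}\le c_p\,\|u\|_{\md^{1,p}(\ch)}$ together with the product and chain rules for $\bd$. Each iteration distributes one additional derivative across the factors $G$, $(\gamma_F^{-1})^{ij}$ and $\bd F^j$; expanding $(\gamma_F^{-1})^{ij}$ via Cramer's rule and the identity $\bd(\gamma_F^{-1}) = -\gamma_F^{-1}(\bd\gamma_F)\gamma_F^{-1}$ brings the product $\gamma_F^{-1}\bd F$ to the fore. A Hölder splitting $1/p=1/q+1/r$ then separates the norms of $G$ from those of $\gamma_F^{-1}\bd F$, contributing the two factors $\|G\|_{k,q}^k$ and $\|\gamma_F^{-1}\bd F\|_{k,2^{k-1}r}^k$; the exponent $2^{k-1}$ reflects the roughly doubled number of product-rule terms produced at each level of the iteration. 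The main technical obstacle is precisely this bookkeeping of Sobolev orders and Hölder exponents across the $k$ iterations, ensuring that they collapse into the exponents stated in \eqref{Holder}.
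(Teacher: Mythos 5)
The paper does not prove this proposition: it is quoted directly as Proposition~2.1.4 of Nualart's book \cite{Nu06}, and no argument is given in the text. Your sketch reconstructs the standard proof from that reference faithfully: the base case $k=1$ via the chain rule for $\bd$, inversion of the symmetric Malliavin matrix, and $\bd$--$\delta$ duality (with $(\gamma_F^{-1})^{ij}\in\md^\infty$ supplied by Cramer's rule and the moment hypothesis on $(\det\gamma_F)^{-1}$); the recursion by peeling off $(\alpha_1,\ldots,\alpha_{k-1})$ by induction and then $\alpha_k$ by the base case; and the bound \eqref{Holder} by iterating Meyer's inequality, the product and chain rules, and H\"older. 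You also correctly identify the exponent bookkeeping as the only genuinely technical part.

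One small inaccuracy worth noting: your heuristic that the factor $2^{k-1}$ in the H\"older exponent ``reflects the roughly doubled number of product-rule terms produced at each level'' is loose. The $2^{k-1}$ is more accurately seen as the result of the iterated H\"older split $1/r = 1/(2r) + 1/(2r)$ applied at each of the $k-1$ inductive steps, distributing $1/r$ evenly over the $k$ copies of $\gamma_F^{-1}\,\bd F$ that appear when the recursion is unrolled (which also accounts for the exponent $k$ on $\Vert\gamma_F^{-1}\bd F\Vert_{k,2^{k-1}r}$). This is a gloss rather than a gap, and the argument otherwise matches the cited proof.
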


As a consequence, one has the following expression for the density of a non-degenerate random vector.
\begin{proposition} \label{density} \cite[Proposition 2.1.5]{Nu06}
Let $F=(F^1,\ldots ,F^n)$ be a non-degenerate random vector as in Definition \ref{non-deg}. Then the density $p_F(y)$ of $F$ belongs to the Schwartz space, and
for any $\sigma \subset \{1,\ldots ,n\}$,
\begin{equation*}
p_F(y)=(-1)^{n-\vert \sigma \vert}\me[{\bf 1}_{\{ F^i>y^i, i \in \sigma, F^i <y^i, i \neq \sigma\}} H_{(1,\ldots ,n)}(F,1)], \; \;  \text{for all } y \in \R^n. 
\end{equation*}
\end{proposition}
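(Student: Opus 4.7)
The plan is to combine the integration by parts formula of Proposition~\ref{intbyparts} with a distributional identity that realizes $p_F(y)$ as the pairing $\me[\delta_y(F)]$. For fixed $y \in \R^n$ and $\sigma \subseteq \{1, \ldots, n\}$, I would introduce the bounded measurable function
$$
\varphi_y^\sigma(z) := (-1)^{n - |\sigma|}\, \mathbf{1}_{\{z^i > y^i,\, i \in \sigma;\, z^i < y^i,\, i \notin \sigma\}} = \prod_{i \in \sigma} \mathbf{1}_{\{z^i > y^i\}} \prod_{i \notin \sigma} \bigl(-\mathbf{1}_{\{z^i < y^i\}}\bigr).
$$
A short distributional calculation, using the one-dimensional identity $\tfrac{d}{dz}\mathbf{1}_{\{z > y\}} = \tfrac{d}{dz}(-\mathbf{1}_{\{z < y\}}) = \delta_y$, shows that $\partial_1 \cdots \partial_n \varphi_y^\sigma = \delta_y$ on $\R^n$, independently of $\sigma$. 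Formally applying Proposition~\ref{intbyparts} with $\alpha = (1, \ldots, n)$ and $G = 1$ to the test function $\varphi_y^\sigma$ then yields
$$
p_F(y) = \me\bigl[(\partial_1 \cdots \partial_n \varphi_y^\sigma)(F)\bigr] = \me\bigl[\varphi_y^\sigma(F)\, H_{(1, \ldots, n)}(F, 1)\bigr],
$$
which, unfolded, is the asserted identity.

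To justify this rigorously, I would regularize each indicator factor by a smooth, uniformly bounded, monotone approximation $H_\epsilon \in C_p^\infty(\R)$ of the Heaviside function, producing $\varphi_{y, \epsilon}^\sigma \in C_p^\infty(\R^n)$. Proposition~\ref{intbyparts} now applies directly and gives
$$
\me\bigl[(\partial_1 \cdots \partial_n \varphi_{y, \epsilon}^\sigma)(F)\bigr] = \me\bigl[\varphi_{y, \epsilon}^\sigma(F)\, H_{(1, \ldots, n)}(F, 1)\bigr].
$$
Dominated convergence on the right-hand side produces the expectation appearing in the statement. For the left-hand side, the existence of a smooth density $p_F$ for the non-degenerate vector $F$ (standard consequence of the Malliavin criterion, \cite[Theorem 2.1.4]{Nu06}) allows one to rewrite it as $\int (\partial_1 \cdots \partial_n \varphi_{y, \epsilon}^\sigma)(z)\, p_F(z)\, dz$, which converges to $p_F(y)$ by the approximate-identity property $\partial_1 \cdots \partial_n \varphi_{y, \epsilon}^\sigma \to \delta_y$.

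For the Schwartz property, I would iterate the argument: for any multi-index $\beta=(\beta_1,\ldots,\beta_k)$, the same smoothing scheme applied to $\partial^\beta \varphi_{y, \epsilon}^\sigma$ produces a representation of $\partial^\beta p_F(y)$ as the expectation of an indicator function times $H_{(1, \ldots, n, \beta_1, \ldots, \beta_k)}(F, 1)$, which lies in every $L^p(\Omega)$ by \eqref{Holder}. This gives smoothness of $p_F$, while rapid decay of $p_F$ and its derivatives at infinity follows from combining H\"older's inequality with the tail bound $\PP(|F^i| > |y^i|) \leq c_r |y^i|^{-r}$ for arbitrary $r \geq 1$ (using $F \in \md^\infty \subset \bigcap_r L^r(\Omega)$). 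The main technical obstacle is the careful design of the regularization: the approximations $\varphi_{y, \epsilon}^\sigma$ and all their derivatives must satisfy polynomial-growth bounds uniform in $\epsilon$ so that the hypotheses of Proposition~\ref{intbyparts} and dominated convergence apply simultaneously at every step of the iteration.
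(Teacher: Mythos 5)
Your proposal is correct in substance and recovers both the identity and the Schwartz property, but the route is somewhat more elaborate than the standard argument in the cited reference. In \cite[Proposition~2.1.5]{Nu06}, one applies the integration by parts formula not to a regularized delta but to a smooth \emph{antiderivative} of a test function: for $f\in C^\infty_c(\R^n)$ of product form, one sets $\varphi(x)=(-1)^{n-|\sigma|}\prod_{i\in\sigma}\int_{-\infty}^{x_i}f_i\,\cdot\,\prod_{i\notin\sigma}\int_{x_i}^{\infty}f_i$, so that $\partial_1\cdots\partial_n\varphi=f$ and $\varphi\in C_p^\infty$; Proposition~\ref{intbyparts} then gives $\me[f(F)]=\me[\varphi(F)H_{(1,\ldots,n)}(F,1)]$, and a single application of Fubini's theorem unfolds the iterated integrals to identify the asserted expectation as the density with respect to Lebesgue measure. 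That argument \emph{produces} the density formula without presupposing anything about $p_F$, whereas your scheme must first invoke the existence and continuity of $p_F$ from the non-degeneracy criterion in order to pass the regularized deltas through the expectation, and then must keep careful track of uniform-in-$\epsilon$ polynomial-growth bounds on $\varphi^\sigma_{y,\epsilon}$ and all its partials so that Proposition~\ref{intbyparts} and dominated convergence apply simultaneously. Both approaches are valid, and your treatment of the Schwartz property (iterating with longer multi-indices, combining H\"older with Chebyshev tail estimates from $F\in\bigcap_r L^r$) is exactly what the standard proof does as well. The only improvement I would suggest is to replace the regularization/approximate-identity step by the Fubini-on-antiderivatives device: it is shorter, sidesteps the uniform-growth bookkeeping you flag as the main technical obstacle, and establishes existence of $p_F$ as a byproduct rather than as a prerequisite.
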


\subsubsection{Karhunen-Loeve expansions}
Karhunen-Loeve expansions are approximations of the Gaussian process $B$ in $\bch$. We shall design here one of those expansions, which will be useful for further computations. It relies on the Volterra type representation \eqref{eq:volterra-representation} for $B$.

\smallskip

To this aim, consider the Cameron-Martin space $\bch^{W}$ of the usual Brownian motion, namely $\bch^{W}=W^{1,2}([0,1])$, and let $(h_k)_{k \ge 1}$ be any orthonormal  basis of $\bch^{W}$. If $\{Z_k; \, k\ge 1\}$ is an i.i.d sequence of standard Gaussian random variables, it is well-known (see e.g~\cite{Us-bk}) that the process
\[
W_t=\sum_{k=1}^{+\infty} h_k(t) Z_k
\]
is a Brownian motion on $[0,1]$. Our Karhunen-Loeve approximation of $W$ will be given by $W^n_t=\sum_{k=1}^{n} h_k(t) Z_k$, and we have the following result:

\begin{proposition}\label{prop:good-karhunen}
Let $0 < \tau <1$. There exists an orthonormal basis $\{\ell_k; \, k\ge 1\}$ of $\bch^{W}$ such that, setting $W^n_t=\sum_{k=1}^{n} \ell_k(t) Z_k$, the distribution of the processes $W$ and $W-W^n$ are equivalent on $[0,\tau]$.
\end{proposition}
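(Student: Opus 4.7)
The plan is to build $\{\ell_k\}_{k\ge 1}$ explicitly via the canonical isometry $\bch^{W}\simeq L^{2}(\ou)$ given by differentiation, setting $\ell_{k}(t):=\int_{0}^{t}\dot\ell_{k}(s)\,ds$ once an orthonormal basis $\{\dot\ell_{k}\}_{k\ge 1}$ of $L^{2}(\ou)$ has been chosen. I pick $\{\dot\ell_{k}\}$ such that its restrictions to $[\tau,1]$ remain linearly independent in $L^{2}([\tau,1])$; the cosine basis $\dot\ell_{k}(s)=\sqrt{2}\cos((k-\tfrac{1}{2})\pi s)$ is one concrete choice, since trigonometric functions of distinct frequencies are linearly independent on any subinterval. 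With this basis, $W^{n}=\sum_{k=1}^{n}Z_{k}\ell_{k}$ where $Z_{k}=\int_{0}^{1}\dot\ell_{k}\,dW$ are i.i.d.\ standard Gaussians, and a short covariance computation yields $\EE[W^{n}(s)(W_{t}-W^{n}_{t})]=0$; so $W^{n}$ and $W-W^{n}$ are independent as jointly Gaussian processes.

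The crux is to identify the Cameron-Martin space of $W-W^{n}$ restricted to $[0,\tau]$ with $W^{1,2}_{0}([0,\tau])$, which is the Cameron-Martin space of $W|_{[0,\tau]}$. Decompose $\bch^{W}=E\oplus F$ orthogonally, with $E=\{h\in\bch^{W}:h\text{ is constant on }[\tau,1]\}$ and $F=\{h\in\bch^{W}:h\equiv 0\text{ on }[0,\tau]\}$; the restriction map $r:h\mapsto h|_{[0,\tau]}$ has kernel $F$ and restricts to an isometry $E\simeq W^{1,2}_{0}([0,\tau])$. Setting $V_{n}=\overline{\operatorname{span}\{\ell_{k}:k>n\}}$, so that $V_{n}^{\perp}=\operatorname{span}(\ell_{1},\ldots,\ell_{n})$, one computes
\[
(V_{n}+F)^{\perp}=V_{n}^{\perp}\cap F^{\perp}=\operatorname{span}(\ell_{1},\ldots,\ell_{n})\cap E=\{0\},
\]
because $\sum c_{k}\ell_{k}\in E$ would force $\sum c_{k}\dot\ell_{k}\equiv 0$ on $[\tau,1]$, contradicting the linear-independence hypothesis. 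Moreover $V_{n}^{\perp}+E$ is closed (sum of a finite-dimensional and a closed subspace), hence $V_{n}+F$ is closed as well; together with its density this gives $V_{n}+F=\bch^{W}$, whence $r(V_{n})=W^{1,2}_{0}([0,\tau])$. The standard identity $\operatorname{CM}(X|_{[0,\tau]})=\{h|_{[0,\tau]}:h\in\operatorname{CM}(X)\}$ applied to $X=W-W^{n}$ (whose Cameron-Martin space on $\ou$ is exactly $V_{n}$) then yields $\operatorname{CM}(W-W^{n}|_{[0,\tau]})=W^{1,2}_{0}([0,\tau])$.

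To conclude, the two centered Gaussian laws on $\cac([0,\tau])$ have identical Cameron-Martin spaces, and their covariance kernels on $[0,\tau]^{2}$ differ by the finite-rank operator with kernel $\sum_{k=1}^{n}\ell_{k}(s)\ell_{k}(t)$, which is trivially Hilbert-Schmidt. The Feldman-H\'ajek dichotomy theorem for centered Gaussian measures then yields equivalence. Alternatively, since $W^{n}|_{[0,\tau]}\in\operatorname{CM}(W-W^{n}|_{[0,\tau]})$ almost surely and $W^{n}\perp(W-W^{n})$, one may condition on the realization of $W^{n}$ and apply the classical Cameron-Martin theorem fibrewise; integrating out $W^{n}$ produces the explicit Radon-Nikodym derivative $d\mathcal{L}(W|_{[0,\tau]})/d\mathcal{L}((W-W^{n})|_{[0,\tau]})=\EE[\rho_{W^{n}}(\cdot)]$, with $\rho_{h}$ the Cameron-Martin density of the deterministic shift by $h\in W^{1,2}_{0}([0,\tau])$.

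The main obstacle is the Cameron-Martin identification in the second paragraph: na\"ively one expects removing $n$ basis vectors from the Karhunen-Loeve expansion of $W$ to shrink the Cameron-Martin space of $W-W^{n}$ by $n$ dimensions, and by Feldman-H\'ajek this would force mutual singularity of the restricted laws. The crucial point is that each removed $\ell_{k}$ has a nontrivial component in $F$, so its restriction to $[0,\tau]$ coincides with the restriction of some element of $V_{n}$; modulo $F$, the first $n$ directions are fully recoverable from $V_{n}$, and no Cameron-Martin direction on $[0,\tau]$ is actually lost.
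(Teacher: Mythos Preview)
Your proof is correct, but follows a genuinely different route from the paper's. The paper argues by conditioning: it first identifies $W-W^{n}$ in law with $W$ conditioned on the event $\{\int_{0}^{1}\ell_{k}'\,dW=0,\ 1\le k\le n\}$, and then observes that if the Gram matrix $(\int_{\tau}^{1}\ell_{i}'\ell_{j}')_{1\le i,j\le n}$ is invertible, the conditional density of $(\int_{0}^{1}\ell_{k}'\,dW)_{k\le n}$ given $\sigma(W_{s},\,s\le\tau)$ exists, which yields equivalence directly via a Bayes-type identity. You instead identify the Cameron--Martin spaces of the two restricted processes and then conclude by a fibrewise Cameron--Martin shift (your second argument, which is the clean one; the Feldman--H\'ajek invocation is a little under-justified, since the Hilbert--Schmidt condition there is not simply ``covariances differ by finite rank'', but the alternative you give suffices). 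Both proofs rest on exactly the same structural hypothesis---linear independence of $\dot\ell_{1},\ldots,\dot\ell_{n}$ on $[\tau,1]$, which is the invertibility of the paper's Gram matrix---and both exhibit a basis satisfying it for every $\tau<1$ (cosines for you, Gram--Schmidt of $(1-t)^{k-1}$ for the paper). The paper's argument is more elementary and yields an explicit Radon--Nikodym density $\eta_{\tau}(0)$ in one line; your approach is more structural and makes transparent \emph{why} no Cameron--Martin direction is lost on $[0,\tau]$, which is conceptually illuminating. One small remark: your closedness step for $V_{n}+F$ can be shortened by noting that $V_{n}$ is closed of finite codimension, so $V_{n}+F=V_{n}\oplus P_{V_{n}^{\perp}}(F)$ is automatically closed.
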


\begin{proof}
We divide this proof in two steps.

\smallskip

\noindent
\textit{Step 1:} We first prove that if the matrix $(\int_\tau^1 \ell_i'(s)\ell_j'(s) ds )_{1 \le i,j \le n}$ is invertible, then the distribution of the processes $W$ and $W-W^n$ are equivalent on $[0,\tau]$.

\smallskip

For this, let us first observe that $W-W^n$ has the same  distribution as the Brownian motion $W$ conditioned by the event $(\int_0^1 \ell'_k(s) dW_s=0, \text{ for } 1\le k \le n)$. Indeed, for any bounded and measurable functional $F$ on the Wiener space, we have
\begin{align*}
 & \mathbb{E} \left[ F\left( W_t , 0 \le t \le 1\right) \Big\vert\, \int_0^1 \ell'_k(s) dW_s=0, \, 1\le k \le n \right] \\
 =&\mathbb{E} \left[ F\left( \sum_{k=1}^{+\infty} \ell_k(t) Z_k , 0 \le t \le 1\right) \Big\vert\, \int_0^1 \ell'_k(s) dW_s=0, \, 1\le k \le n \right] \\
 =&\mathbb{E} \left[ F\left( \sum_{k=n+1}^{+\infty} \ell_k(t) Z_k , 0 \le t \le 1\right) \Big\vert\, \int_0^1 \ell'_k(s) dW_s=0, \, 1\le k \le n \right] \\
 =&\mathbb{E} \left[ F\left( \sum_{k=n+1}^{+\infty} \ell_k(t) Z_k , 0 \le t \le 1\right) \right], 
\end{align*}
where we have invoked the independence of the families $\{\int_0^1 \ell'_k(s) dW_s;\, 1\le k \le n\}$ and $\{\int_0^1 \ell'_k(s) dW_s;\,  k > n\}$. It is thus readily checked that
\begin{equation} \label{eq:cdt-expectation1}
\mathbb{E} \left[ F\left( W_t , 0 \le t \le 1\right) \Big\vert\, \int_0^1 \ell'_k(s) dW_s=0, \, 1\le k \le n \right]  
= \mathbb{E} \left[ F\left( W_t-W^n_t , 0 \le t \le 1\right)\right].
\end{equation}

Let now $0 < \tau <1$ and assume that the matrix $(\int_\tau^1 \ell_i'(s)\ell_j'(s) ds )_{1 \le i,j \le n}$ is invertible. This invertibility implies that the conditional density of $ ( \int_0^1 \ell'_k(s) dW_s)_{ 1\le k \le n}$ given $\sigma(W_s, s \le \tau)$ with respect to the distribution of  $ ( \int_0^1 \ell'_k(s) dW_s)_{ 1\le k \le n}$   exists. Let us denote by $\eta_\tau (y)$, $y \in \mathbb{R}^n$ this density. If $F$ is a bounded and measurable functional on the Wiener space we then have
\begin{equation}\label{eq:cdt-expectation2}
\mathbb{E} \left[ F\left( W_t , 0 \le t \le \tau \right) \Big\vert\, \int_0^1 \ell'_k(s) dW_s=0, 1\le k \le n \right] =
\mathbb{E} \left[ \eta_\tau (0)F\left( W_t , 0 \le t \le \tau \right) \right].
\end{equation}
Gathering relations \eqref{eq:cdt-expectation1} and \eqref{eq:cdt-expectation2}, we thus get that the distribution of the processes $W-W^n$ and $W$ are equivalent on $[0,\tau]$. Our proposition is thus proved once we show that there exists an orthonormal basis $\{\ell_k; \, k\ge 1\}$ of $\bch^{W}$ such that for any $\tau\in[0,1)$, the matrix $(\int_\tau^1 \ell_i'(s)\ell_j'(s) ds )_{1 \le i,j \le n}$ is invertible.

\smallskip

\noindent
\textit{Step 2:}
Let us now construct an orthonormal basis of $\bch^{W}$ with the desired invertibility property: let $(f_k)_{k \ge 1}$ be any basis of $L^2[0,1]$ and denote by $\ell'_k$ the Gram-Schmidt orthonormalisation of $(f_k)_{k \ge 1}$ . By using triangular matrices, we see that the invertibility  of the matrix $(\int_\tau^1 \ell_i'(s)\ell_j'(s) ds )_{1 \le i,j \le n}$  is then equivalent to the invertibility of $(\int_\tau^1 f_i(s)f_j(s) ds )_{1 \le i,j \le n}$. For instance, by choosing $f_k(t)=(1-t)^{k-1}$, $k \ge 1 $, some elementary calculations involving Hilbert matrices yield our claim.

\end{proof}

The previous result on Brownian motion has a direct implication in terms of our fractional Brownian motion $B$:
\begin{corollary}\label{cor:def-B-n}
Let $0 < \tau <1$. There exists an orthonormal basis $\{h_k; \, k\ge 1\}$ of $\bch$ such that, setting $B^n_t=\sum_{k=1}^{n} h_k(t) Z_k$, the distribution of the processes $B$ and $B-B^n$ are equivalent on $[0,\tau]$.
\end{corollary}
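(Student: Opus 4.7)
The plan is to transfer the Brownian statement of Proposition \ref{prop:good-karhunen} to the fBm $B$ via the Volterra representation \eqref{eq:volterra-representation}, using the fact that this representation is \emph{causal}: $B\restriction_{[0,\tau]}$ is a measurable functional of $W\restriction_{[0,\tau]}$.

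First I would construct the candidate basis of $\bch$. Let $\{\ell_k\}_{k\ge 1}$ be the basis of $\bch^{W}=W^{1,2}([0,1])$ produced by Proposition \ref{prop:good-karhunen}; then $\{\ell_k'\}_{k\ge 1}$ is an orthonormal basis of $L^2([0,1])$, and the map $\phi\mapsto \int_0^{\cdot}K(\cdot,u)\,\phi(u)\,du$ is an isometric isomorphism from $L^{2}([0,1])$ onto $\bch$ (this is just the definition of the Cameron--Martin space recalled in Section \ref{sec:wiener-space}, since $\crr$ is an isometry from $\ch$ onto $\bch$ and $K_{H}^{*}$ is an isometry from $\ch$ onto $L^{2}([0,1])$). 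Consequently, setting
\begin{equation*}
h_{k}(t):=\int_{0}^{t}K(t,u)\,\ell_{k}'(u)\,du, \qquad k\ge 1,
\end{equation*}
produces an orthonormal basis $\{h_{k}\}_{k\ge 1}$ of $\bch$.

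Next I would identify the corresponding Karhunen--Loeve truncation of $B$. With $Z_{k}=\int_{0}^{1}\ell_{k}'(s)\,dW_{s}$ (so that the $Z_{k}$ are i.i.d.\ standard Gaussians and $W^{n}_{t}=\sum_{k=1}^{n}\ell_{k}(t)Z_{k}$ as in Proposition \ref{prop:good-karhunen}), an application of Fubini together with the Volterra representation gives
\begin{equation*}
\sum_{k=1}^{n}h_{k}(t)\,Z_{k}
=\sum_{k=1}^{n}Z_{k}\int_{0}^{t}K(t,u)\ell_{k}'(u)\,du
=\int_{0}^{t}K(t,u)\,dW^{n}_{u}=:B^{n}_{t},
\end{equation*}
so that $B^{n}$ is precisely the Volterra image of $W^{n}$, and therefore $B-B^{n}$ is the Volterra image of $W-W^{n}$.

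Finally I would transfer the equivalence of laws. Because the kernel $K(t,u)$ vanishes for $u>t$, the process $B\restriction_{[0,\tau]}$ is a deterministic (measurable) functional $\Psi$ of $W\restriction_{[0,\tau]}$, and similarly $(B-B^{n})\restriction_{[0,\tau]}=\Psi(W-W^{n})\restriction_{[0,\tau]}$. Proposition \ref{prop:good-karhunen} says that the laws of $W$ and $W-W^{n}$ on $[0,\tau]$ are equivalent, hence their push-forwards under $\Psi$ are equivalent as well, which is exactly the statement for $B$ and $B-B^{n}$.

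The only genuinely delicate point is the identification $\sum_{k=1}^{n}h_{k}(t)Z_{k}=\int_{0}^{t}K(t,u)\,dW^{n}_{u}$, since for $H<1/2$ the kernel $K(t,\cdot)$ is singular at $u=t$; one has to justify the interchange of the finite sum and the stochastic integral, which is straightforward because $W^{n}$ has smooth paths so $\int_{0}^{t}K(t,u)\,dW^{n}_{u}$ is literally $\int_{0}^{t}K(t,u)\sum_{k=1}^{n}\ell_{k}'(u)Z_{k}\,du$. Everything else is a direct consequence of Proposition \ref{prop:good-karhunen} and the causal, deterministic nature of the Volterra transform.
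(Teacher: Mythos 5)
Your proposal is correct and takes essentially the same approach as the paper: define $h_k(t)=\int_0^t K(t,u)\,\ell_k'(u)\,du$ from the Brownian basis of Proposition~\ref{prop:good-karhunen} and transfer the equivalence of laws through the (causal) Volterra transform. The paper's proof states only the construction of $h_k$ and leaves the remaining steps — that $\{h_k\}$ is an orthonormal basis of $\bch$, the identification $B^n=\int_0^\cdot K(\cdot,u)\,dW^n_u$, and the push-forward of equivalent laws under a $\sigma(W_s,\,s\le\tau)$-measurable map — implicit, which you have supplied explicitly and accurately.
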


\begin{proof}
Take the orthonormal basis $\{\ell_k; \, k\ge 1\}$ of $\bch^{W}$ constructed at Proposition \ref{prop:good-karhunen} and set $h_k(t)=\iot K(t,u) \ell'_{k}(u) \, du$.

\end{proof}

\subsection{Differential equations driven by fractional Brownian motion}
Recall that we consider the following kind of equation:
\begin{equation}
\label{eq:sde} X^{x}_t =x +\int_0^t V_0 (X^x_s)ds+
\sum_{i=1}^d \int_0^t V_i (X^{x}_s) dB^i_s,
\end{equation}
where the vector fields $V_0,\ldots,V_d$ are $\cac_b^\infty$-vector fields on $\R^n$ and $B$ is our driving fBm as defined in \eqref{eq:volterra-representation}.

\subsubsection{Existence, uniqueness and estimates}
Proposition \ref{prop:fbm-rough-path} ensures the existence of a lift of $B$ as a geometrical rough path. The general rough paths theory (see e.g.~\cite{FV-bk,Gu}) allows thus to state  the following proposition:

\begin{proposition}\label{prop:moments-sdes-rough}
Consider equation (\ref{eq:sde}) driven by a $d$-dimensional fBm $B$ with Hurst parameter $H>1/4$, and assume that the vector fields $V$ satisfy Hypothesis \ref{hyp:regularity-V}. Then

\smallskip

\noindent
\emph{(i)}
Equation (\ref{eq:sde}) admits a unique finite $p$-var continuous solution $X^x$ in the rough paths sense, for any $p> 1/H$. 

\smallskip

\noindent
\emph{(ii)}
For any $\lambda>0$ and $\delta<1/p$ we have
\begin{equation}\label{eq:exp-delta-moments}
\me\left[\exp\lambda\left(\sup_{0\leq t\leq T}|X^x_t|^\delta\right)\right]<\infty.
\end{equation}
\end{proposition}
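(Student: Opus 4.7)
\textbf{Proof plan for Proposition \ref{prop:moments-sdes-rough}.}

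For \emph{part (i)}, the plan is simply to invoke Lyons' universal limit theorem for rough differential equations (see e.g. \cite[Thm.~10.26 \& 10.36]{FV-bk}). Hypothesis \ref{hyp:regularity-V} furnishes $C_b^\infty$ coefficients, which in particular are $\text{Lip}^{\gamma}$ with $\gamma>p$, while Proposition \ref{prop:fbm-rough-path} provides the required lift $\mathbf{B}$ of $B$ as a geometric $p$-rough path for any $p>1/H$. Lyons' theorem then yields a unique solution $X^x$ of finite $p$-variation, depending continuously on the driving rough path and the initial condition.

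For \emph{part (ii)}, the strategy is to combine a deterministic rough path estimate with a Gaussian concentration result for $\|\mathbf{B}\|_{p-{\rm var};[0,T]}$. More precisely, the first step is to recall the a priori bound for RDEs with $C_b^\infty$ coefficients (e.g.\ \cite[Thm.~10.36 and Prop.~10.3]{FV-bk}): there exists a constant $C=C(V,T)>0$ such that
\[
\sup_{t\in[0,T]}|X^x_t-x| \;\le\; C\,\max\bigl(\|\mathbf{B}\|_{p-{\rm var};[0,T]},\,\|\mathbf{B}\|_{p-{\rm var};[0,T]}^{\,p}\bigr).
\]
This controls the solution polynomially in the $p$-variation norm of the driver; the exponent $p$ is the price we pay for the iterated integrals entering the rough path.

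The second step is to invoke the Gaussian integrability of $\|\mathbf{B}\|_{p-{\rm var};[0,T]}$. By the Borell-type concentration result for Gaussian rough paths (see \cite[Ch.~15]{FV-bk}), the random variable $\|\mathbf{B}\|_{p-{\rm var};[0,T]}$ enjoys a Gaussian tail, so that
\[
\EE\Bigl[\exp\bigl(\mu\,\|\mathbf{B}\|_{p-{\rm var};[0,T]}^{q}\bigr)\Bigr] <\infty,
\qquad \text{for every } \mu>0 \text{ and every } q<2.
\]
Combining the two steps, for $\delta<1/p$ the deterministic bound gives $(\sup_{[0,T]}|X^x_t|)^{\delta}\le c_\delta(1+\|\mathbf{B}\|_{p-{\rm var};[0,T]}^{p\delta})$ with $p\delta<1<2$, so that the Fernique estimate applies with $q=p\delta$ and yields \eqref{eq:exp-delta-moments} for any $\lambda>0$.

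The main obstacle is the deterministic step: the bound $\sup|X^x_t-x|\lesssim\|\mathbf{B}\|_{p-{\rm var}}^{p}$ is not automatic — it is precisely the boundedness of the $V_i$ together with their derivatives that prevents the solution from blowing up, and it is the rough path nature of the equation that forces the exponent $p$ rather than $1$. The probabilistic step, by contrast, is a standard consequence of Fernique's theorem once one knows (from \cite[Ch.~15]{FV-bk}) that the $p$-variation norm of a Gaussian rough path is a measurable semi-norm on the abstract Wiener space associated with $B$.
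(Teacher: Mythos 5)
The paper itself gives no proof of Proposition \ref{prop:moments-sdes-rough}; it is stated as an immediate consequence of general rough paths theory with a pointer to \cite{FV-bk,Gu}, and the heavier work is deferred to the sharper estimate of Proposition \ref{prop:exp-moments-rdes}. Your fill-in is the expected standard argument and is correct: part (i) is exactly Lyons' universal limit theorem, with the lift supplied by Proposition \ref{prop:fbm-rough-path} and the Lip-$\gamma$ regularity supplied by Hypothesis \ref{hyp:regularity-V}; part (ii) combines the a priori bound $\sup_{t\le T}|X^x_t-x|\le C\max(\|\mathbf{B}\|_{p\text{-var}},\|\mathbf{B}\|_{p\text{-var}}^p)$ (FV-bk Lemma 10.7, also used in the paper's proof of Proposition \ref{prop:exp-moments-rdes}) with the Gaussian tail of $\|\mathbf{B}\|_{p\text{-var};[0,T]}$ from \cite[Ch.~15]{FV-bk}, so that for $\delta<1/p$ one has $p\delta<1<2$ and Fernique applies. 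One small remark: your argument actually proves \eqref{eq:exp-delta-moments} for any $\delta<2/p$, not just $\delta<1/p$, so the proposition as stated is conservative; the optimal integrability (exponent independent of $p$) is what Proposition \ref{prop:exp-moments-rdes} then establishes via Cass--Litterer--Lyons.
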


In fact inequality \eqref{eq:exp-delta-moments} can be improved to get the following exponential bound:
\begin{proposition}\label{prop:exp-moments-rdes}
Under the assumptions of Proposition \ref{prop:moments-sdes-rough}, the following inequality holds true: 
\begin{equation}\label{eq:concentration-X}
\PP\lp \sup_{t\in[0,1]} |X^x_t-x| \ge \xi \rp \le  \exp\lp -\frac{c_{H} \,  \xi^{(2H+1)\wedge 2}}{t^{2H}} \rp.
\end{equation}
\end{proposition}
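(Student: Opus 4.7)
The plan is to combine a deterministic rough path control of $\sup_{s \in [0,t]}|X^x_s - x|$ in terms of a suitable norm of the enhanced driver $\mathbf{B}$ with Gaussian concentration for that norm and with the self-similarity of fBm.

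First I would invoke the standard rough path estimates of Friz-Victoir (Chapter 10), together with the boundedness in Hypothesis \ref{hyp:regularity-V}, to obtain, for any $p > 1/H$, a deterministic bound of the form
\[
\sup_{s \in [0,t]} |X^x_s - x| \;\le\; F\lp \|\mathbf{B}\|_{p\text{-var};[0,t]} \rp,
\]
where $F$ is an explicit nondecreasing function. When $H > 1/2$ this bound is essentially linear in $\|B\|_{H'\text{-H\"ol}}$ for any $H' < H$ (Young theory). In the rough regime $H \in (1/4,1/2)$ the bound is better phrased via the splitting/counting technique of Cass-Litterer-Lyons, namely $\sup_{s\le t}|X^x_s - x| \le C(1+N_\alpha(\mathbf{B}|_{[0,t]}))$, where $N_\alpha$ counts the number of successive sub-intervals on which $\|\mathbf{B}\|_{p\text{-var}}$ accumulates a fixed threshold $\alpha$.

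Second, the self-similarity $(B_{ts})_s \stackrel{d}{=} (t^H B_s)_s$ lifts to the enhanced path via the natural dilation on $G^{\lfloor p \rfloor}(\R^d)$, so that $\|\mathbf{B}\|_{p\text{-var};[0,t]}$ has the same law as $t^H \|\mathbf{B}\|_{p\text{-var};[0,1]}$, and similarly for $N_\alpha$ (up to a rescaling of $\alpha$). This scaling is what produces the $t^{2H}$ in the denominator of the exponent. Third, Gaussian concentration \`a la Borell-Sudakov-Tsirelson applied on the abstract Wiener space of $B$ yields
\[
\PP\lp \|\mathbf{B}\|_{p\text{-var};[0,1]} \ge r \rp \le \exp(-c\, r^2),
\]
(Friz-Victoir, Chapter 15, or Friz-Oberhauser). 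Combining the deterministic estimate, the scaling, and this concentration gives $\exp(-c\xi^2 / t^{2H})$ directly in the Young case $H > 1/2$.

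In the rough case $H \in (1/4,1/2)$ one must apply Borell's inequality to $N_\alpha$ rather than to $\|\mathbf{B}\|_{p\text{-var}}$ itself, and the relevant Cameron-Martin ball is the one associated with $\bch$ rather than with $\ch$. By Proposition \ref{prop:imbed-bar-H}, $\bch \hookrightarrow \cac^{q\text{-var}}$ with $q > (H+1/2)^{-1}$, and a Young-pairing argument internal to the CLL proof converts the Gaussian isoperimetric inequality into a Weibull-type tail for $N_\alpha$ with exponent $2/q' = 2H+1$ (here $q'$ is the H\"older-conjugate to $q$). Together with the $t^H$ scaling of the counting function, this produces exactly the factor $\xi^{2H+1}/t^{2H}$ in the exponent. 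The main obstacle, in my view, lies precisely in this rough regime: one needs to transfer the Gaussian concentration through the CLL counting function while simultaneously tracking the correct $t$-dependence, so that the sharp exponent $2H+1$ in $\xi$ appears jointly with the correct denominator $t^{2H}$. The Young case is considerably easier and provides a useful sanity check at $H = 1/2$, where both candidate exponents coincide.
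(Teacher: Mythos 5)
Your outline follows the paper's proof: both reduce the claim to the bound $\sup_{s\le t}|X^x_s-x|\le c_V\,\alpha\,(N_{\alpha,t,p}+1)$, where $N_{\alpha,t,p}$ is the Cass--Litterer--Lyons greedy counting function, and then invoke the Weibull-type tail $\PP\lp N_{\alpha,t,p}+1>n\rp\lesssim \exp\lp-c\,n^{2/\rho}/t^{2H}\rp$ with $\rho=(H+1/2)^{-1}$, which encodes the $\rho$-variation regularity of $\bch$. Two small points are worth recording. First, the exponent is $2/\rho$, not ``$2/q'$'' with $q'$ the H\"older conjugate of $q$: with $q=(H+1/2)^{-1}$ one has $2/q=2H+1$, whereas the conjugate $q'=(1/2-H)^{-1}$ would give $1-2H$, so $q$ and $q'$ should be swapped in that sentence. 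Second, the Young regime $H>1/2$ does not follow ``directly'' from the deterministic estimate $\sup_{s\le t}|X^x_s-x|\le F(\|\mathbf{B}\|_{p\text{-var};[0,t]})$ combined with Gaussian concentration of $\|\mathbf{B}\|_{p\text{-var};[0,1]}$: since $F(r)\asymp r\vee r^{p}$ with $p>1/H>1$, that route only gives $\exp(-c\,\xi^{2/p})$ with $2/p<2$, which falls short of the claimed quadratic exponent. The paper handles $H>1/2$ by the very same sub-interval decomposition and counting-function bound, with the tail exponent $n^{2/\rho}$ replaced by $n^{2}$; this is what produces the sharp $\xi^2$ in \eqref{eq:concentration-X}.
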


\begin{proof}
Consider first the case $1/4<H<1/2$. Taking up the notation of \cite{CLL} we consider $p>2\rho$ and the control
\begin{equation}\label{eq:def-control-B}
\om_{\bb,p}(s,t)=\|\bb\|_{p-{\rm var};[s,t]}^{p}.
\end{equation}
Then \cite[Lemma 10.7]{FV-bk} states that
\begin{equation}\label{eq:bnd-X-pvar}
\|X^x\|_{p-{\rm var};[s,t]} \le c_V \lp \|\bb\|_{p-{\rm var};[s,t]} \vee \|\bb\|_{p-{\rm var};[s,t]}^{p} \rp
= c_V \lp \lc \om_{\bb,p}(s,t)\rc^{1/p} \vee \om_{\bb,p}(s,t) \rp.
\end{equation}
In particular, for any $t_{i}<t_{i+1}$ we have
\begin{equation}\label{eq:ineq-davies-increments}
|\der X^x_{t_{i}t_{i+1}}| \le 
c_V \lp \lc \om_{\bb,p}(t_{i},t_{i+1})\rc^{1/p} \vee \om_{\bb,p}(t_{i},t_{i+1}) \rp .
\end{equation}
Consider now $\alpha\ge 1$ and construct a partition of $\ot$ inductively in the following way: we set $t_0=0$ and
\begin{equation}\label{eq:def-tau-i}
t_{i+1}= \inf\lcl  u >t_{i} ; \, \|\bb\|^p_{p-{\rm var};[t_{i},u]} \ge \alpha \rcl.
\end{equation}
We then set $N_{\alpha,t,p}=\sup\{n\ge 0;\, t_n < t\}$. Observe that, since we have taken $\alpha\ge 1$, inequality~\eqref{eq:ineq-davies-increments} can be read as $|\der X_{t_{i}t_{i+1}}| \le  c_V \,\om_{\bb,p}(t_{i},t_{i+1}) = c_V \,\alpha$. Hence
\begin{equation}\label{eq:telescopic-increments-X}
|X^x_t-x| \le | X_t^x-X_{N_{\alpha,t,p} } |+ \sum_{i=0}^{N_{\alpha,t,p}-1} |\der X_{t_{i}t_{i+1}}| \le c_V \,\alpha \, (N_{\alpha,t,p}+1).
\end{equation}
Recall now Theorem 6.4 in \cite{CLL}: we have
\begin{equation}\label{eq:concentration-N}
\PP \lp N_{\alpha,t,p} +1> n \rp
\lesssim \exp\lp  -\frac{c_{p,\rho} \, n^{2/\rho}}{t^{2H}}\rp,
\end{equation}
where $\rho=(H+1/2)^{-1}$ is the constant introduced at Proposition \ref{prop:imbed-bar-H}. This easily yields
\begin{equation}\label{eq:concentration-X-2}
\PP\lp \sup_{t\in[0,1]} |X^x_t-x| \ge \xi \rp
\le \PP \lp c_V \,\alpha \, (N_{\alpha,t,p} +1)> \xi \rp
\lesssim \exp\lp -\frac{c_{p,\rho,V} \, \alpha^{2-2/\rho} \xi^{2/\rho}}{t^{2H}} \rp,
\end{equation}
which is our claim. The case $H>1/2$ is handled along the same lines, except that the coefficient $n^{2/\rho}$ in \eqref{eq:concentration-N} is replaced by $n^2$, which reflects into the fact that $\xi^{2/\rho}$ in \eqref{eq:concentration-X-2} is replaced by $\xi^{2}$.
\end{proof}

\subsubsection{Differentiability}

Once equation (\ref{eq:sde}) is solved, the vector $X_t^x$ is a typical example of  random variable which can be differentiated in the Malliavin sense. We shall express this Malliavin derivative in terms of the Jacobian $\bj$ of the equation, which is defined by the relation $\bj_{t}^{ij}=\partial_{x_j}X_t^{x,i}$. Setting $DV_{j}$ for the Jacobian of $V_{j}$ seen as a function from $\R^{n}$ to $\R^{n}$, let us recall that $\bj$ is the unique solution to the linear equation
\begin{equation}\label{eq:jacobian}
\bj_{t} = \id_{n} + \int_0^t DV_0 (X^x_s) \, \bj_{s} \, ds+
\sum_{j=1}^d \int_0^t V_j (X^{x}_s) \, \bj_{s} \, dB^j_s,
\end{equation}
and that the following results hold true (see \cite{CF} and \cite{NS}  for further details):
\begin{proposition}\label{prop:deriv-sde}
Let $X^x$ be the solution to equation (\ref{eq:sde}) and suppose the $V_i$'s satisfy Hypothesis \ref{hyp:regularity-V}. Then
for every $i=1,\ldots,n$, $t>0$, and $x \in \mathbb{R}^n$, we have $X_t^{x,i} \in
\mathbb{D}^{\infty}(\ch)$ and
\begin{equation*}
\mathbf{D}^j_s X_t^{x}= \mathbf{J}_{s,t} V_j (X^x_s) , \quad j=1,\ldots,d, \quad 
0\leq s \leq t,
\end{equation*}
where $\mathbf{D}^j_s X^{x,i}_t $ is the $j$-th component of
$\mathbf{D}_s X^{x,i}_t$, $\mathbf{J}_{t}=\partial_{x} X^x_t$ and $\bj_{s,t}=\bj_{t}\bj_{s}^{-1}$. 
\end{proposition}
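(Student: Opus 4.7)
The plan is to establish Malliavin smoothness and derive the formula simultaneously via a smooth approximation argument. First I would pick a sequence of smooth drivers $B^{n}$ converging to $B$ in the $p$-variation rough path topology; a convenient choice is the Karhunen--Loeve truncation of Corollary \ref{cor:def-B-n} (though piecewise linear interpolations work equally well). For each $n$, the equation obtained by replacing $B$ with $B^n$ in \eqref{eq:sde} is an honest ODE with $C_b^\infty$ coefficients, so the classical chain rule together with standard Wiener space Malliavin calculus gives $X_t^{x,n}\in\md^{\infty}(\ch)$ together with the ``smooth'' version of the target identity,
\begin{equation*}
\mathbf{D}_h X_t^{x,n}
=\int_0^t \mathbf{J}^n_{s,t}\sum_{j=1}^d V_j(X_s^{x,n})\,\dot{h}^{j,n}_s\,ds,
\qquad h\in\ch,
\end{equation*}
where $\mathbf{J}^n$ denotes the Jacobian of the ODE and $h^n$ is the smooth regularisation of $h$ induced by the same approximation scheme.

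Next I would pass to the limit in $n$. The continuity of the It\^o--Lyons map (see \cite{FV-bk}) combined with the exponential integrability of the $p$-variation norm of the enhanced fBm $\mathbf{B}$ (Proposition \ref{prop:fbm-rough-path}) ensures that $X^{x,n}\to X^x$ and $\mathbf{J}^n\to \mathbf{J}$ in every $L^p(\oom)$, with $\mathbf{J}$ the solution of the linear RDE \eqref{eq:jacobian}. Rewriting the smooth identity through the isometry $\crr$ and Proposition \ref{prop:imbed-bar-H}, the right-hand side is recognised as the $\ch$-inner product of $h$ with the process $s\mapsto\mathbf{J}_{s,t} V_j(X_s^x)\mathbf{1}_{[0,t]}(s)$, which is then identified in the limit with $\mathbf{D}^j_s X_t^x$ once the closability of $\mathbf{D}$ is invoked.

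To upgrade convergence of derivatives to a genuine $\md^{\infty}(\ch)$ statement, one iterates the argument: each differentiation of the equation for $X^{x,n}$ produces another linear RDE driven by $B^n$ whose coefficients are smooth functions of $(X^{x,n},\mathbf{J}^n,(\mathbf{J}^n)^{-1})$ and of the previously computed derivatives. Combining Proposition \ref{prop:exp-moments-rdes} applied to the augmented system $(X^x,\mathbf{J},\mathbf{J}^{-1})$ with the linear rough path estimates from \cite{CLL,FV-bk} yields uniform-in-$n$ $L^p(\oom)$ bounds for $\|\mathbf{D}^k X_t^{x,n}\|_{\ch^{\otimes k}}$ for every $k,p\ge 1$. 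Closability of $\mathbf{D}^k$ then gives $X_t^{x,i}\in\md^{k,p}(\ch)$ for all $k,p$, whence $X_t^{x,i}\in\md^{\infty}(\ch)$.

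The main technical obstacle is the uniform-in-$n$ control of the Jacobian $\mathbf{J}^n$ and its inverse, together with the higher order derivatives, in $L^p(\oom)$. In the rough regime $H\in(1/4,1/2)$ the Jacobian equation is linear with coefficients involving $DV_j(X^x)$ in the diffusion, so the naive Gronwall argument is not available and one has to exploit the Cass--Litterer--Lyons partitioning machinery (the stopping times $\{t_i\}$ of \eqref{eq:def-tau-i} and the tail bound \eqref{eq:concentration-N}) applied to the enlarged system. Once this deterministic-in-$\omega$ growth control is in place, the approximation argument closes itself and delivers both the Sobolev membership and the identification $\mathbf{D}^j_s X_t^x=\mathbf{J}_{s,t} V_j(X_s^x)$ on $[0,t]$.
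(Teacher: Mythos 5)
The paper does not actually prove Proposition~\ref{prop:deriv-sde}: immediately before stating it, the text says ``the following results hold true (see \cite{CF} and \cite{NS} for further details),'' so the result is imported from the literature, with the moment control on the Jacobian needed for the $\md^{\infty}$ assertion being Proposition~\ref{prop:moments-jacobian}, itself quoted from \cite{CLL}. Your proposal therefore has to be measured against those sources rather than against an in-paper argument.

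Against those sources, your route is genuinely different and, in its present form, more technical than necessary. The cited proofs (both for $H>1/2$ in \cite{NS} via Young calculus and for $H\in(1/4,1/2)$ in \cite{CF}) never approximate the driving signal by smooth $B^n$. Instead they exploit the structural fact that the enhanced Gaussian rough path $\mathbf{B}$ is invariant under translation by Cameron--Martin elements: since $\bch\subset\cC^{\rho\text{-var}}$ with $\rho<2$ (Proposition~\ref{prop:imbed-bar-H}), the shift $\omega\mapsto\omega+\crr h$ lifts canonically to $\mathbf{B}\mapsto T_{\crr h}(\mathbf{B})$, the It\^o--Lyons map is Fr\'echet differentiable in this direction, and the directional derivative solves the linearised RDE whose variation-of-constants formula gives exactly $\mathbf{D}_h X^x_t=\int_0^t\mathbf{J}_{s,t}V(X^x_s)\,d(\crr h)_s$; identifying the kernel with $\mathbf{J}_{s,t}V_j(X^x_s)$ is then immediate. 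The $\md^\infty$ statement follows from closability plus Fernique and the integrability of $\|\bj\|_{p\text{-var}}$, i.e.\ the CLL estimate. Your smooth-approximation strategy can be made to work, but it reintroduces the very convergence issues that the direct approach is designed to avoid, and as written it contains two concrete gaps. First, you attribute the exponential integrability of $\|\mathbf{B}\|_{p\text{-var}}$ to Proposition~\ref{prop:fbm-rough-path}, but that proposition only asserts existence of the lift; Gaussian tails are a separate Fernique-type input. Second, and more seriously, the uniform-in-$n$ $L^p$ control on $\|\mathbf{D}^k X^{x,n}_t\|_{\ch^{\otimes k}}$ that you invoke requires the CLL tail bound \eqref{eq:concentration-N} to hold \emph{uniformly over the approximations $B^n$}; since the CLL machinery is driven by the $2$-dimensional $\rho$-variation of the covariance of the driver, one must check that $V_\rho(R^{n})$ is bounded uniformly in $n$ for the chosen approximation scheme (Karhunen--Loeve or piecewise linear). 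This is true but is a non-trivial step that you assume silently, and it is precisely the step the direct Cameron--Martin argument sidesteps by never perturbing the noise. There is also a minor notational slip: $\crr h\in\bch$ need not be absolutely continuous for $H<1/2$, so writing $\dot h^{j,n}_s\,ds$ for the Young integral against $d(\crr h)^j_s$ is misleading; the inner product should be read through \eqref{eq:def-R} and Proposition~\ref{prop:imbed-bar-H} directly.
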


\smallskip

Let us now quote the recent result \cite{CLL}, which gives a useful estimate for moments of the Jacobian of rough differential equations driven by Gaussian processes.
\begin{proposition}\label{prop:moments-jacobian}
Consider a  fractional Brownian motion $B$ with Hurst parameter $H\in (1/4,1/2]$ and $p>1/H$. Then for any $\eta\ge 1$, there exists a finite constant $c_\eta$ such that the Jacobian $\bj$ defined at Proposition \ref{prop:deriv-sde} satisfies:
\begin{equation}\label{eq:moments-J-pvar}
\EE\lc  \Vert \bj \Vert^{\eta}_{p-{\rm var}; [0,1]} \rc = c_\eta.
\end{equation}
\end{proposition}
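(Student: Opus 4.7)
The plan is to leverage the greedy-partition machinery already used in the proof of Proposition \ref{prop:exp-moments-rdes}, combined with the fact that $\bj$ solves the \emph{linear} rough equation \eqref{eq:jacobian}. The central observation is that for a linear RDE, the increment and the $p$-variation of the solution on an interval can be bounded multiplicatively in terms of the initial value and the driving signal on that interval. Iterating this bound across a partition on which $\bb$ has controlled $p$-variation produces an estimate which is exponential in the number $N=N_{\alpha,1,p}$ of pieces; since the tail of $N$ recorded in \eqref{eq:concentration-N} decays faster than exponentially for $H>1/4$, this is enough to close the argument.

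More concretely, I would fix $p>1/H$ and $\alpha\ge 1$, and take the partition $\{t_i\}_{0\le i\le N+1}$ defined in \eqref{eq:def-tau-i}, so that $\Vert\bb\Vert_{p\text{-var};[t_i,t_{i+1}]}=\alpha^{1/p}$. On each such subinterval, the standard linear-RDE estimate (see e.g.\ \cite[Theorem 10.53]{FV-bk}) yields
\[
|\bj_{t_{i+1}}-\bj_{t_i}|\le K\,|\bj_{t_i}|,
\qquad
\Vert \bj\Vert_{p\text{-var};[t_i,t_{i+1}]}\le K\,|\bj_{t_i}|,
\]
for a deterministic constant $K=K(\alpha,V,p)$. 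Since $\bj_{t_{i+1}}=\bj_{t_i,t_{i+1}}\bj_{t_i}$, an immediate induction gives $|\bj_{t_i}|\le (1+K)^{i}$, and summing the $p$-variation contributions across the $N+1$ subintervals yields
\[
\Vert\bj\Vert_{p\text{-var};[0,1]}^p \le \sum_{i=0}^{N}\Vert\bj\Vert_{p\text{-var};[t_i,t_{i+1}]}^p
\le K^p \sum_{i=0}^{N}(1+K)^{ip}
\le \exp\bigl(C(N+1)\bigr),
\]
for some $C=C(\alpha,V,p)$.

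It then remains to take expectations. The bound \eqref{eq:concentration-N}, applied with $t=1$, reads $\PP(N+1>n)\lesssim\exp(-c\,n^{2/\rho})$ where $2/\rho=2H+1>3/2$ for $H\in(1/4,1/2]$. Since $n\mapsto \exp(\eta Cn/p)\exp(-cn^{2/\rho})$ is summable for every $\eta\ge 1$, we conclude
\[
\EE\bigl[\Vert\bj\Vert^\eta_{p\text{-var};[0,1]}\bigr]\le \EE\bigl[\exp(\eta C(N+1)/p)\bigr]<\infty,
\]
which is exactly \eqref{eq:moments-J-pvar}.

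The main technical obstacle lies in the single-interval linear-RDE estimate: unlike the nonlinear bound \eqref{eq:bnd-X-pvar} used for $X^x$, one cannot apply Davies' lemma verbatim, because the coefficients of \eqref{eq:jacobian} grow linearly in $\bj$, so the $p$-variation bound on a piece is necessarily proportional to $|\bj_{t_i}|$ rather than uniformly controlled. The appropriate replacement, implemented in \cite{CLL}, is the linear-RDE version of Davies' lemma, which produces a constant depending only on the $p$-variation of $\bb$ and the vector fields, times the initial value\,---\,precisely the structure that drives the multiplicative iteration and the exponential-in-$N$ bound above. Once this pathwise input is accepted, the rest of the argument is a clean application of the sub-Gaussian tail \eqref{eq:concentration-N}.
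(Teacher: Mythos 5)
Your overall strategy---combining the greedy-partition machinery with the multiplicative structure of the linear Jacobian equation, then integrating against the tail estimate for $N_{\alpha,1,p}$---is precisely the route of \cite{CLL}, which is the reference the paper cites for this proposition (the paper gives no independent proof); indeed the pathwise bound $\|\bj\|_{p-{\rm var};[0,t]}\le C\,\|\bb\|_{p-{\rm var};[0,t]}\exp(CN_{\alpha,t,p})$ that you are effectively deriving is used verbatim at \eqref{eq:upp-bnd-J-with-B-p-var} with a citation to \cite[Prop.~4.11]{CLL}. So the approach is the right one, and not a genuinely different route.

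There is, however, one genuine error. Your display
\begin{equation*}
\Vert\bj\Vert_{p-{\rm var};[0,1]}^p \le \sum_{i=0}^{N}\Vert\bj\Vert_{p-{\rm var};[t_i,t_{i+1}]}^p
\end{equation*}
points in the wrong direction for $p>1$: along a fixed partition the \emph{$p$-th power} of the $p$-variation is superadditive, not subadditive (take $\bj$ strictly increasing and nearly linear on $[0,1]$; then the left side is $(\bj_1-\bj_0)^p$ while the right side is roughly $(N+1)^{1-p}(\bj_1-\bj_0)^p$, which is strictly smaller). What does hold, by Minkowski's inequality in $\ell^p$ after refining an arbitrary partition of $[0,1]$ by the points $\{t_i\}$, is subadditivity of the $p$-variation \emph{norm} itself:
\begin{equation*}
\Vert\bj\Vert_{p-{\rm var};[0,1]} \le \sum_{i=0}^{N}\Vert\bj\Vert_{p-{\rm var};[t_i,t_{i+1}]} .
\end{equation*}
Replacing your display by this one and combining with $\Vert\bj\Vert_{p-{\rm var};[t_i,t_{i+1}]}\le K(1+K)^i|\bj_0|$ still yields $\Vert\bj\Vert_{p-{\rm var};[0,1]}\le C\exp\bigl(C'(N_{\alpha,1,p}+1)\bigr)$, so the final integration against the tail \eqref{eq:concentration-N} (which at $t=1$ reads $\PP(N_{\alpha,1,p}+1>n)\lesssim\exp(-c\,n^{2H+1})$ with $2H+1>1$) closes the argument unchanged. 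With this correction the proof is sound.
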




\section{Strict positivity of the density}
\label{sec:positivity-density} 
In this section, we follow the approach developed by Ben Arous and L\'{e}andre \cite{BL} and prove the strict positivity of the density of solutions to equation (\ref{eq:sde}) as stated in Theorem~\ref{thm:strict-positivity-intro}. 
We first present,  at Section \ref{sec:strict-positivity-general}, the general criterion characterizing the set of points where the density is strictly positive for a non-degenerate finite-dimensional random variable $F$. Then we show how to apply this criterion in our fractional SDE context at Section \ref{sec:positive-frac-sdes}.

\subsection{Strict positivity of the density for non-degenerate random variables}
\label{sec:strict-positivity-general}

We borrow the considerations here from \cite{Nu-flour}, for which we refer for further details. Consider $(\oom,\cf,\PP)$ the canonical probability space associated with our fBm $B$.

\smallskip

Let us now introduce, for a given element $\underline{\ell}=(\ell_1,\ldots,\ell_n)\in\ch^n$ and a vector $z\in\mr^n$, the shifted Gaussian process
$$(T_z^{\underline{\ell}}B)(h)=B(h)+\sum_{j=1}^nz_j\langle h,\ell_{j}\rangle_{\ch} ,\quad h\in\ch .$$
Cameron-Martin's theorem of change of measures shows that for any integrable random variable $G$ we have $\me [G]=\me [G(T_z^{\underline{\ell}}B)J_z]$, where
$$J_z=\exp\left(-\sum_{j=1}^n z_jB(\ell_{j})-\frac{1}{2}\left\|\sum_{j=1}^n z_j\ell_{j}\right\|^2_{\ch }\right).$$
With the same $\underline{\ell}=(\ell_1,\ldots,\ell_n)$ as above, for any multi-index $\alpha=(\alpha_1,\ldots,\alpha_k)$ lying in $\{1,2,\ldots,n\}^k$, let $\underline{\ell}_\alpha=(\ell_{\alpha _1},\ldots,\ell_{\alpha _k})$ and define
$$R_{\underline{\ell}_\alpha,p}F=\int_{\{|z|\leq1\}}\left\langle(\mathbf{D}^kF)(T_z^{\underline{\ell}}B), \ell_{\alpha _1}\otimes\cdot\cdot\cdot\otimes \ell_{\alpha _k}\right\rangle^p_{\ch ^{\otimes k}}dz,$$
for some $p>n$ and multi-index $\alpha$ with $|\alpha|=k\geq 0$.

\smallskip

With these notations in mind, our general criterion for positivity of densities can be read as follows:
\begin{theorem}\label{th: main-sufficient}
Let $F=(F^1,\ldots,F^n)$ be a non-degenerate random variable and $\Phi:\ch \to\mr^n$ a $\cac^{\infty}$ functional. Suppose that the following condition holds:

\smallskip

\noindent
\emph{\bf (H1)}
For any $h\in\ch $ there exists a sequence of measurable transformations $T^h_N: \Omega\to\Omega$ such that $\mp\circ(T^h_N)^{-1}$ is absolutely continuous with respect to $\mp$. Moreover, let $\{\mathbf{D}\Phi^{j}(h) ; \, j=1,\ldots, n\}$ be the coordinates of $\Phi(h)$ in $\R^{n}$, and set $\underline{\ell}=(\mathbf{D}\Phi^1(h),\ldots,\mathbf{D}\Phi^n(h))$. Then for every $\varepsilon>0$ we suppose that we have
\begin{enumerate}
\item $\lim_{N\to\infty}\mp\{|F\circ T^h_N-\Phi(h)|>\varepsilon\}=0$;
\item $\lim_{N\to\infty}\mp\{\|(\mathbf{D}F)\circ T^h_N- (\mathbf{D}\Phi)(h)\|_{\ch }>\varepsilon\}=0$; and
\item $\lim_{M\to\infty}\sup_{N}\mp\{(R_{\underline{\ell}_\alpha,p}F)\circ T^h_N>M\}=0$ for some $p>n$ and all multi-index $\alpha$ with $|\alpha|=0,1,2,3.$
\end{enumerate}
Finally, for a fixed $y\in\mr^n$ assume that there exists  an $h\in\ch $ such that $\Phi(h)=y$ and for the deterministic Malliavin matrix $\gamma_{\Phi}(h)$ of $\Phi$ at $h$, one has $\det \gamma_{\Phi}(h)>0$. Then the density of $F$ satisfies $p(y)>0$.
\end{theorem}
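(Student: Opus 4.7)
The plan is to use the classical Ben Arous--L\'eandre strategy: combine the approximating transformations $T^h_N$ (which localize near the skeleton point $\Phi(h)=y$) with small Cameron--Martin shifts $T^{\underline{\ell}}_z$ along $\underline{\ell}=(\mathbf{D}\Phi^1(h),\dots,\mathbf{D}\Phi^n(h))$, in order to produce a non-degenerate perturbation of $F$ whose image sweeps out a neighborhood of $y$. Since the density $p$ of the non-degenerate vector $F$ is continuous (Proposition \ref{density}), it suffices to establish a uniform ball lower bound $\PP(F\in B(y,\delta))\gtrsim \delta^n$ for all sufficiently small $\delta>0$; continuity of $p$ then rules out $p(y)=0$.

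I would obtain this lower bound as follows. For each $N$ and $z\in\R^n$, set
\begin{equation*}
G_{N,z} := F\circ T^h_N\circ T^{\underline{\ell}}_z.
\end{equation*}
A first-order Taylor expansion in $z$, combined with hypothesis (H1)(2), yields
\begin{equation*}
G_{N,z} \;=\; F\circ T^h_N \,+\, \gamma_\Phi(h)\, z \,+\, R_N(z),
\end{equation*}
the linear term having coefficient matrix $\gamma_\Phi(h)$ because the $(i,j)$-entry of $\gamma_\Phi(h)$ equals $\langle \mathbf{D}\Phi^i(h),\mathbf{D}\Phi^j(h)\rangle_\ch$, which is the in-probability limit of $\mathbf{D}F^i(\mathbf{D}\Phi^j(h))\circ T^h_N$ by (H1)(2). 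By (H1)(1), $F\circ T^h_N$ is close to $y$ in probability, and since $\det\gamma_\Phi(h)>0$, setting $z=\gamma_\Phi(h)^{-1}w$ and letting $w$ range over a small ball around $0$ forces $G_{N,z}$ to cover (to first order, diffeomorphically) a neighborhood of $y$. Integrating against the Cameron--Martin Radon--Nikodym density of $T^{\underline{\ell}}_z$, changing variables from $z$ to $w$, and using absolute continuity of $\PP\circ(T^h_N)^{-1}$ with respect to $\PP$, one recovers the desired lower bound on $\PP(F\in B(y,\delta))$.

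The hard part will be controlling the remainder $R_N(z)$ and the Cameron--Martin weight uniformly in $N$ and in small $z$. The remainder is a Taylor integral involving higher-order Malliavin derivatives of $F$ paired with tensor products $\ell_{\alpha_1}\otimes\cdots\otimes\ell_{\alpha_k}$; when averaged over a small $z$-ball these are precisely the functionals $R_{\underline{\ell}_\alpha,p} F$ appearing in hypothesis (H1)(3), which is what explains the somewhat unusual form of that hypothesis. The order $|\alpha|\le 3$ there accommodates the cumulative effect of a second-order Taylor remainder in $z$, differentiation of the Cameron--Martin weight $J_z$, and differentiation of the inverse Malliavin matrix that features in the integration by parts behind the density formula of Proposition \ref{density}. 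Once the uniform-in-$N$ estimates on $R_N$ and on the perturbed Malliavin matrix $\gamma_{G_{N,z}}$ are in place via (H1)(3), passing to the limit $N\to\infty$ in the lower bound above closes the argument and gives $p(y)>0$.
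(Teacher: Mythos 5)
The paper itself gives no proof of this theorem; its ``proof'' is a citation of Proposition 4.2.2 in \cite{Nu-flour} (the Ben Arous--L\'eandre positivity criterion), together with a remark that the definition of $R_{\underline{\ell}_\alpha,p}F$ has been slightly modified. At the level of strategy you have the right picture: shift by Cameron--Martin directions $\underline{\ell}=(\mathbf{D}\Phi^1(h),\ldots,\mathbf{D}\Phi^n(h))$, use the approximating transformations $T^h_N$ to localize near the skeleton point, exhibit a local diffeomorphism $z\mapsto G_{N,z}$ with Jacobian tied to $\gamma_\Phi(h)$, and push Lebesgue measure through. This is indeed what \cite{Nu-flour} does.

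The step that will not close as written is the last one: ``using absolute continuity of $\PP\circ(T^h_N)^{-1}$ with respect to $\PP$, one recovers the desired lower bound on $\PP(F\in B(y,\delta))$.'' Absolute continuity only transfers null sets; from a $\delta^n$-lower bound on $\PP(F\circ T^h_N\in B(y,\delta))$ you can infer that $\PP(F\in B(y,\delta))>0$, but not that it is $\gtrsim\delta^n$, and mere positivity of $\PP(F\in B(y,\delta))$ for all $\delta$ plus continuity of $p$ does not rule out $p(y)=0$. The correct use of absolute continuity is at the level of events, not of the final estimate: let $B$ be the (fixed, $N$-independent) set of $\omega$ on which $z\mapsto F(T^{\underline{\ell}}_z\omega)$ is a diffeomorphism from a small $z$-ball onto a neighborhood of $y$ with quantitative bounds on the inverse Jacobian and on higher derivatives (this is where the $R_{\underline{\ell}_\alpha,p}$ functionals and the Sobolev condition $p>n$ enter). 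Hypotheses (H1)(1)--(3) put $(T^h_N)^{-1}(B)$ at positive $\PP$-probability for $N$ large, and absolute continuity of $\PP\circ(T^h_N)^{-1}$ with respect to $\PP$ then forces $\PP(B)>0$. The change of variables, Fubini in $z$, and Cameron--Martin are then carried out on $B$ directly under $\PP$, with no composition by $T^h_N$ in the integrand; this is what produces a genuine lower bound on $p(y)$. A second, smaller imprecision: the linear coefficient in your expansion of $G_{N,z}$ is not $\gamma_\Phi(h)$ but the random matrix with entries $\langle\mathbf{D}F^i\circ T^h_N,\ell_j\rangle_\ch$, which (H1)(2) only makes converge to $\gamma_\Phi(h)$ in probability; the estimates must keep it random and uniform over $z$ in the working ball, which is another place (H1)(3) with $|\alpha|=2$ is needed.
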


\begin{proof}
The theorem is borrowed from \cite{Nu-flour}, with a slight modification of the definition of  $R_{\underline{\ell}_\alpha,p}F$. The legitimacy of making such modification is seen directly from the proof of Proposition 4.2.2 in \cite{Nu-flour}. 
\end{proof}

\subsection{Strict positivity of the density for solutions to fractional SDE's}
\label{sec:positive-frac-sdes}
This section is devoted to the proof of Theorem \ref{thm:strict-positivity-intro}. The idea is to apply the general Theorem \ref{th: main-sufficient} to $F=X_t^x$ for each fixed $t>0$, where $X^x$ is the solution to equation \eqref{eq:sde} and where we still work under Hypotheses \ref{hyp:regularity-V} and \ref{hyp:ben-arous-leandre}. 
In this context, some natural definitions of the maps $T^h_N$ and of the functional $\Phi$ are as follows: 

\smallskip

\noindent
\textbf{(i)} For any $h\in\ch$, we simply define $T^h_N$ by the identity
$$ 
T^h_N(B)=B-B^N+ \crr h,
$$
where $B^N$ has been defined at Proposition \ref{prop:good-karhunen} and Corollary \ref{cor:def-B-n} and with $\crr h^i$ defined by~\eqref{eq:def-R}.

\smallskip

\noindent
\textbf{(ii)}
The map $\Phi$ is defined as the evaluation of a function at $t\in(0,1]$. Namely, $\Phi(h)$ is solution to the ordinary differential equation
\begin{equation}\label{eq:skeleton}
\Phi(h)_t=x+\int_0^tV_0(\Phi(h)_s)ds+\sum_{i=1}^d\int_0^tV_i(\Phi(h)_s)d \crr h^i_s,
\end{equation}
understood in the {($p$-var)} Young sense.

\smallskip

\noindent
In  what follows, we need to check the above $\Phi$ and $T^h_N$ satisfy condition $\bf(H1)$ in Theorem~\ref{th: main-sufficient}.

\smallskip

Recall that, according to Proposition \ref{prop:fbm-rough-path},  $B$ admits a lift to $G^{\lfloor p \rfloor}(\R^d)$  as a geometric rough path for any fixed $p>1/H$. If $B^N$ is the Karhunen-Loeve type approximation of $B$ discussed above, denote by $\tilde{\mathbf{B}}^N$ the lift of $\tilde{B}^N=B-B^N$ to $G^{\lfloor p \rfloor}(\R^d)$. We have
\begin{proposition}\label{th: rough-approx1}
There exists constant $\eta>0$ depending on $p, \rho$ and the process $B$ such that
$$\sup_N\me \lc \exp\left(\eta\|\tilde{\mathbf{B}}^N\|^2_{p-{\rm var}; [0,1]}\right) \rc <\infty.$$
Moreover, for all $q\geq 1$, 
$$\|\tilde{\mathbf{B}}^N\|_{p-{\rm var}; [0,1]}\to 0\quad\mathrm{in}\ L^q(\mp)\ \mathrm{as}\ N\to\infty.$$ 
\end{proposition}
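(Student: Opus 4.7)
The plan is to deduce both claims from Friz–Victoir's Fernique-type estimate for Gaussian rough paths combined with a uniform control of the 2D $\rho$-variation of the covariance of the residual process $\tilde{B}^N=B-B^N$. I would then upgrade convergence in probability to $L^q$-convergence via uniform integrability coming from the exponential bound.

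First, I would identify the covariance of $\tilde{B}^N$. Since $B^N=\sum_{k=1}^{N} h_k(\cdot)\, Z_k$ is constructed from a basis $\{h_k\}$ that is orthonormal in $\bch$ and the $Z_k$ are i.i.d.\ standard Gaussian, $\tilde{B}^N$ is centered Gaussian with covariance $\tilde{R}^N(s,t)=R(s,t)-\sum_{k=1}^{N} h_k(s)\,h_k(t)$. Interpreting $B^N$ as the orthogonal projection of $B$ (inside the first Wiener chaos) onto the span of $\{h_k,\,1\le k\le N\}$, the residual $\tilde{B}^N$ is orthogonal to $B^N$ in $L^2(\Omega)$. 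Exploiting this orthogonality together with Proposition~\ref{prop:fbm-rough-path}, I would show that $V_{\rho}(\tilde{R}^N)\le V_{\rho}(R)<\infty$ uniformly in $N$, for $\rho=1/(2H)<2$. This is the Karhunen–Loeve analog of the standard 2D-variation estimates from Friz–Victoir (Chapter 15).

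With this uniform bound in hand, I would invoke the Fernique-type theorem for Gaussian rough paths (cf.\ \cite[Theorem 15.33]{FV-bk}): any centered Gaussian process $X$ on $[0,1]$ whose covariance satisfies $V_{\rho}(R_X)\le K$ admits a rough path lift $\mathbf{X}$ of order $p$ for any $p>2\rho$, whose $p$-variation norm has sub-Gaussian tails with constants depending only on $K,p,\rho$. Applying this uniformly to $\tilde{B}^N$ yields
\begin{equation*}
\sup_N \EE\!\left[\exp\!\lp\eta\|\tilde{\mathbf{B}}^N\|_{p-{\rm var};[0,1]}^{2}\rp\right] < \infty
\end{equation*}
for a single $\eta>0$ depending on $K,p,\rho$, which is the first claim. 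In particular, $\sup_N \EE[\|\tilde{\mathbf{B}}^N\|_{p-{\rm var}}^{2q}]<\infty$ for every $q\ge 1$, so the family $\{\|\tilde{\mathbf{B}}^N\|_{p-{\rm var}}^{q}\}_N$ is uniformly integrable.

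For the $L^q$-convergence, it then suffices to show convergence in probability $\|\tilde{\mathbf{B}}^N\|_{p-{\rm var};[0,1]}\to 0$. I would rely on the general convergence result for Gaussian rough paths in \cite[Chapter 15]{FV-bk}: if $(B^N,B)$ is a jointly Gaussian approximating sequence with $B^N\to B$ in $L^2$ at each time and with uniformly controlled $(1/(2H))$-variation of the mixed covariance, then the corresponding lifts converge in $p$-variation for any $p>1/H$. Since the Karhunen–Loeve partial sums satisfy these conditions, we obtain the desired convergence, and combining with the uniform integrability above yields $\|\tilde{\mathbf{B}}^N\|_{p-{\rm var}}\to 0$ in $L^q(\PP)$.

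The main technical obstacle is the uniform 2D $\rho$-variation bound on $\tilde{R}^N$: naive subadditivity is insufficient because $V_\rho(R_N)$ need not be bounded in $N$. The right argument uses that $B^N$ is the projection of $B$ onto a finite-dimensional subspace of its Cameron–Martin space, so the rectangular increments of $R-R_N$ can be expressed as inner products of (complementary) projections of the indicator functions $\mathbf{1}_{[s,t]}$ and controlled directly by the corresponding rectangular increments of $R$. Once this is settled, the remaining steps are off-the-shelf applications of the Gaussian rough path machinery.
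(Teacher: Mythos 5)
Your proposal follows essentially the same route as the paper, which simply cites the Karhunen--Lo\`eve machinery in Friz--Victoir (Lemma~15.46 and Proposition~15.22 for the Fernique-type tail bound, Theorem~15.47 for the convergence) -- your sketch just unpacks what those results contain. The one point deserving care is the claimed inequality $V_\rho(\tilde R^N)\le V_\rho(R)$: it is not immediate from the projection structure alone, and what the FV lemma actually supplies is a uniform bound $V_\rho(\tilde R^N)\le C\,V_\rho(R)$ with a constant $C$, which is of course all that the Fernique estimate requires.
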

\begin{proof}
The Gaussian tail of $\|\tilde{\mathbf{B}}^N\|_{p-{\rm var}; [0,1]}$ follows from Lemma 15.46 as well as Proposition~15.22 in \cite{FV-bk}. The rest of the statement is the content of Theorem 15.47 in \cite{FV-bk}.

\end{proof}

We also need the following lemma which is a restatement of Theorem 9.33 and Corollary~9.35 in~\cite{FV-bk}.
\begin{lemma}\label{th: rough-couple-conti}
For any $1\leq q\leq p$ so that $p^{-1}+ q^{-1}>1$, let $(\mathbf{x}, h)\in \cC^{p-{\rm var}}([0,1], G^{\lfloor p \rfloor}(\R^d))\times \cC^{q-{\rm var}}([0,1], \R^d)$. The translation of $\mathbf{x}$ by $h$, denoted by $T_h(\mathbf{x})\in \cC^{p-{\rm var}}([0,1], G^{\lfloor p \rfloor}(\R^d))$, is defined to be the lift of $\pi_1(\mathbf{x})+h$ to $G^{\lfloor p \rfloor}(\R^d)$. We have
\begin{enumerate}
\item There is some constant $C$ depending only on $p$ and $q$,
$$\|T_h(\mathbf{x})\|_{p-{\rm var};[0,1]}\leq C(\|\mathbf{x}\|_{p-{\rm var};[0,1]}+\|h\|_{q-{\rm var};[0,1]}).$$
\item The rough path translation $(\mathbf{x}, h)\mapsto T_h(\mathbf{x})$ as a map from
$$\cC^{p-{\rm var}}([0,1], G^{\lfloor p \rfloor}(\R^d))\times \cC^{q-{\rm var}}([0,1], \R^d)\to \cC^{p-{\rm var}}([0,1], G^{\lfloor p \rfloor}(\R^d))$$
is uniformly continuous on bounded sets.
\end{enumerate}
\end{lemma}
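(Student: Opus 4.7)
The plan is to construct $T_h(\mathbf{x})$ level by level via a shuffle-type expansion, estimate each piece by Young's integration theorem, and transfer everything from the smooth case by a density argument. Since the lemma is a restatement of results in Friz--Victoir, the scheme follows Chapter~9 there.

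First, for smooth $x,h$ one has the tautology $T_h(\mathbf{x}) = S_{\lfloor p \rfloor}(x+h)$, and expanding $d(x+h)^{\otimes m}$ by linearity gives, at each level $m \le \lfloor p \rfloor$, a shuffle decomposition
\[
\pi_m(T_h(\mathbf{x}))_{s,t} = \sum_{\epsilon \in \{1,2\}^m} \int_{\Delta_{st}^m} d y^{\epsilon_1}_{u_1}\otimes \cdots \otimes d y^{\epsilon_m}_{u_m},
\]
where $y^{1}=\pi_1(\mathbf{x})$ and $y^{2}=h$. The purely-$x$ summand is $\pi_m(\mathbf{x})_{s,t}$, the purely-$h$ summand is the $m$-fold Young iterated integral of $h$, and the remaining $2^m-2$ cross terms are defined inductively as Young integrals. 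This interpretation is legitimate precisely because, at each inductive step, the integrand is a mixed iterated integral whose variation is controlled by either $p$ or $q$, and the integrator likewise; the hypothesis $p^{-1}+q^{-1}>1$ is exactly what is needed for Young's condition.

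Second, item (1) is obtained by applying the Young bound $|\int_s^t f\,dg| \le C \|f\|_{r\text{-var};[s,t]} \|g\|_{r'\text{-var};[s,t]}$ (valid for $r^{-1}+r'^{-1}>1$) to each of the $2^m$ summands, combined with the standard multiplicative control $\|\pi_k(\mathbf{x})\|_{(p/k)\text{-var}} \le c_k \|\mathbf{x}\|^k_{p\text{-var}}$ and its $q$-variation analog for $h$; summing over shuffles and over the finitely many levels $m\le \lfloor p\rfloor$ yields the stated bound. Item (2) exploits the fact that each cross integral is \emph{bilinear} in its one-forms, so that expanding $T_h(\mathbf{x}) - T_{h'}(\mathbf{x}')$ telescopically reduces uniform continuity to the continuity of Young's integration map on bounded sets, which is itself a quantitative consequence of the same Young estimate.

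The main obstacle is showing that $T_h(\mathbf{x})$, which is \emph{a priori} only a multiplicative functional valued in $T^{\lfloor p\rfloor}(\R^d)$, actually lies in the group $G^{\lfloor p\rfloor}(\R^d)$, i.e.\ is a \emph{geometric} rough path. I would handle this by approximation: select smooth $x^n$ and $h^n$ with $S_{\lfloor p\rfloor}(x^n)\to \mathbf{x}$ in $p$-variation and $h^n\to h$ in $q$-variation, so that $T_{h^n}(S_{\lfloor p\rfloor}(x^n)) = S_{\lfloor p\rfloor}(x^n+h^n)$ lies tautologically in $G^{\lfloor p\rfloor}(\R^d)$; the continuity estimate from (1), applied to the differences, forces the convergence $T_{h^n}(S_{\lfloor p\rfloor}(x^n)) \to T_h(\mathbf{x})$ in $p$-variation, and the closedness of $G^{\lfloor p\rfloor}(\R^d)$ inside $T^{\lfloor p\rfloor}(\R^d)$ transfers group membership to the limit, which is the non-routine part of the argument.
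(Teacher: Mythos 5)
The paper offers no independent proof of this lemma: the text explicitly states that it is a restatement of Theorem 9.33 and Corollary 9.35 of \cite{FV-bk}, which is exactly the source you are reconstructing, so your route and the paper's coincide. Your sketch is sound, but two points deserve more care were you to expand it. First, the assertion that at each step the integrand has variation ``controlled by either $p$ or $q$'' compresses the essential bookkeeping: iterating the cross integrals naively from the inside out would, for a shuffle such as $dx\otimes dh\otimes dx\otimes dx$, appear to require $2/p>1$ in order to Young-integrate against the outer $dx$. One must instead keep the pure-$x$ blocks bundled as the given levels $\pi_k(\mathbf{x})_{u,\cdot}$ --- which, by Chen's relation, have $p$-variation in the free endpoint --- and always arrange to Young-integrate a $p$-variation integrand against $dh$ of $q$-variation, so that only $1/p+1/q>1$ is ever invoked. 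Second, in the closing approximation step, item (1) yields only a uniform bound; to force $T_{h^n}(S_{\lfloor p\rfloor}(x^n))\to T_h(\mathbf{x})$ in $p$-variation and then pass geometricity to the limit, you actually need the Lipschitz-type quantitative estimate that underlies item (2), established at the level of $T^{\lfloor p\rfloor}(\R^d)$-valued multiplicative functionals before group membership is known, so the two items should really be proved together rather than (1) being invoked on its own for the convergence.
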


Now we can state the main approximating result that we need in the rough path topology on $\cC^{p-{\rm var}}([0,1], G^{\lfloor p \rfloor}(\R^d))$.

\begin{theorem}\label{th: rough-approx2}
With the notations introduced above, consider $T_h(\tilde{\mathbf{B}}^N)$. There exists a constant $\eta>0$ depending on $p, H, \|h\|_{\ch}$ and the process $B$ such that
$$\sup_N\me \lc\exp\left(\eta\|T_h(\tilde{\mathbf{B}}^N)\|^2_{p-{\rm var};[0,1]}\right)\rc < \infty.$$
Moreover, for all $q\geq 1$,
$$d_{p-{\rm var};[0,1]}(T_h(\tilde{\mathbf{B}}^N), \mathbf{h})\to 0\quad\mathrm{in}\ L^q(\mp)\ \mathrm{as}\ N\to\infty.$$ 
In the statement above, $\mathbf{h}$ is the lift of $h$ to $G^{\lfloor p \rfloor}(\R^d)$.
\end{theorem}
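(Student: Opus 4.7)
The proof splits naturally into two parts, both of which rest on the translation Lemma~\ref{th: rough-couple-conti} together with the properties of $\tilde{\mathbf{B}}^N$ collected in Proposition~\ref{th: rough-approx1}. The first preparatory task is to pick exponents $p > 1/H$ and $q > (H+1/2)^{-1}$ with $p^{-1}+q^{-1}>1$; these three conditions are simultaneously satisfiable exactly when $H > 1/4$. Proposition~\ref{prop:imbed-bar-H} then yields $\|h\|_{q-{\rm var};[0,1]} \leq (V_q(R))^{1/2}\|h\|_{\ch} < \infty$, so Lemma~\ref{th: rough-couple-conti} applies to the pair $(\tilde{\mathbf{B}}^N, h)$.

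For the exponential integrability, I would combine part~(1) of Lemma~\ref{th: rough-couple-conti} with the elementary bound $(a+b)^2 \leq 2a^2+2b^2$ to get the deterministic inequality $\|T_h(\tilde{\mathbf{B}}^N)\|_{p-{\rm var};[0,1]}^2 \leq 2C^2 \|h\|_{q-{\rm var};[0,1]}^2 + 2C^2\|\tilde{\mathbf{B}}^N\|_{p-{\rm var};[0,1]}^2$. The exponential of the first summand is a finite deterministic constant depending only on $p$, $H$ and $\|h\|_\ch$, while the exponential of the second summand is controlled in expectation, uniformly in $N$, by Proposition~\ref{th: rough-approx1}, provided $\eta$ is chosen small enough that $2C^2\eta$ falls below the Gaussian-tail parameter furnished by that proposition.

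For the $L^q$ convergence, I would first establish convergence in probability and then upgrade it using uniform integrability. One first notes that $T_h(\mathbf{0}) = \mathbf{h}$, since the first level of the trivial rough path $\mathbf{0}$ is the zero path and $T_h$ lifts $0+h = h$. Given $\varepsilon > 0$ and $R > 0$, I would dominate $\mp(d_{p-{\rm var}}(T_h(\tilde{\mathbf{B}}^N), \mathbf{h}) > \varepsilon)$ by the sum of $\mp(\|\tilde{\mathbf{B}}^N\|_{p-{\rm var}} > R)$ and $\mp(d_{p-{\rm var}}(T_h(\tilde{\mathbf{B}}^N), T_h(\mathbf{0})) > \varepsilon,\ \|\tilde{\mathbf{B}}^N\|_{p-{\rm var}} \leq R)$: the first term is small uniformly in $N$ for large $R$ thanks to the Gaussian tail of $\tilde{\mathbf{B}}^N$, while the second vanishes as $N\to\infty$ by the uniform continuity of $T_h$ on bounded sets (Lemma~\ref{th: rough-couple-conti}(2)) combined with $\|\tilde{\mathbf{B}}^N\|_{p-{\rm var}} \to 0$ in probability. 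The uniform exponential bound from the first part then provides uniform integrability of $d_{p-{\rm var}}(T_h(\tilde{\mathbf{B}}^N), \mathbf{h})^q$, promoting convergence in probability to convergence in $L^q(\mp)$.

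The only real technical point is the exponent bookkeeping that renders the translation Lemma applicable: everything hinges on the existence of a $q$ satisfying $q > (H+1/2)^{-1}$ together with $p^{-1}+q^{-1} > 1$ for some $p > 1/H$. The gain in regularity coming from the 2-d variation bound for the covariance $R$ (Proposition~\ref{prop:imbed-bar-H}), which is essentially twice as strong as the available H\"older regularity of $h$, is precisely what makes this feasible for $H > 1/4$.
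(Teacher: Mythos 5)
Your proposal is correct and follows essentially the same route as the paper: part one combines Proposition~\ref{th: rough-approx1} with Lemma~\ref{th: rough-couple-conti}(1), and part two establishes convergence in probability via Lemma~\ref{th: rough-couple-conti}(2) and then upgrades to $L^q$ by uniform integrability coming from part one. The paper's own proof is considerably terser and omits the exponent bookkeeping ($p>1/H$, $q>(H+1/2)^{-1}$, $p^{-1}+q^{-1}>1$, feasible iff $H>1/4$), the appeal to Proposition~\ref{prop:imbed-bar-H} to get $\|h\|_{q\textnormal{-var}}<\infty$, and the truncation argument behind convergence in probability; you have filled these in accurately.
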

\begin{proof}
The first statement follows from Proposition \ref{th: rough-approx1} and Lemma \ref{th: rough-couple-conti} item (1). Moreover, note that Proposition \ref{th: rough-approx1}  and Lemma \ref{th: rough-couple-conti} item (2) imply that $d_{p-{\rm var};[0,1]}(T_h(\tilde{\mathbf{B}}^N), \mathbf{h})\to 0$ in probability, while 
$$\sup_N\me \lc \exp\left(\eta\|T_h(\tilde{\mathbf{B}}^N)\|^2_{p-{\rm var};[0,1]}\right) \rc
<\infty
$$
 implies that $d_{p-{\rm var};[0,1]}(T_h(\tilde{\mathbf{B}}^N), \mathbf{h})^q$ is uniformly integrable for any $q\geq1$.  We conclude that $d_{p-{\rm var};[0,1]}(T_h(\tilde{\mathbf{B}}^N), \mathbf{h})\to 0$ in $L^q(\mp)$ for any $q\geq 1$. This completes the proof of the second statement.
\end{proof}

We are now ready to prove the main theorem of this section.
\begin{proof}[Proof of Theorem \ref{thm:strict-positivity-intro}]
Recall that $\Phi$ is defined by \eqref{eq:skeleton}, and that the solution $X_t^x$ to equation~(\ref{eq:sde}) can be seen as $X_t^x=\Phi(\crr^{-1}B)_t$. With the definition of $T_N^h$ and that of the translation map $T_h$ in Lemma \ref{th: rough-couple-conti}, we have
$$X_t^x\circ T^h_N= \Phi(T_h(\tilde{\mathbf{B}}^N))\quad\mathrm{and} \ \ \mathbf{D}^kX_t^x\circ T^h_N=\mathbf{D}^k\Phi(T_h(\tilde{\mathbf{B}}^N)),\ \mathrm{for\ all}\ k\in\mn.$$
In the above, we consider $T_h(\tilde{\mathbf{B}}^N)$ as a geometric rough path that drives the equation for $\Phi$. Now it follows from Theorem \ref{th: rough-approx2} and the continuity of $\Phi$ and $\mathbf{D}\Phi$ in the rough path topology that
$$X_t^x\circ T^h_N\to \Phi(h),\quad \mathrm{and}\quad \mathbf{D}X_t^x\circ T^h_N\to \mathbf{D}\Phi(h)$$
in probability.  This shows that ({\bf{H1}}) items (1) and (2) is satisfied. 

\smallskip

For ({\bf{H1}}) item (3), recall  that $\underline{\ell}=(\mathbf{D}\Phi^1(h),\ldots,\mathbf{D}\Phi^n(h))$ and that we have set $\underline{\ell}_\alpha=(\ell_{\alpha _1},\ldots,\ell_{\alpha _k})$ for any multi-index $\alpha=(\alpha_1,\ldots,\alpha_k)\in\{1,2,\ldots,n\}^k$. By standard analysis, it suffices to show that for each multi-index $\alpha$ with  $|\alpha|=0,1,2,3,$

\begin{align*}
(R_{\underline{\ell}_\alpha,p}X^x_t)\circ T^h_N&=\int_{\{|z|\leq1\}}\left\langle(\mathbf{D}^kX^x_t)(T_z^{\underline{\ell}}B)\circ T^h_N, \ell_{\alpha _1}\otimes\cdot\cdot\cdot\otimes \ell_{\alpha _k}\right\rangle^p_{\ch ^{\otimes k}}dz\\
&=\int_{\{|z|\leq1\}}\big\langle \mathbf{D}^k\Phi(T_z^{\underline{\ell}}T_h(\tilde{\mathbf{B}}^N)), \ell_{\alpha _1}\otimes\ldots\otimes \ell_{\alpha _k}\big\rangle^p_{\ch ^{\otimes k}}dz
\end{align*}
converges to some deterministic quantity in probability.  Let
$$\hat{h}=h+\sum_{j=1}^n z^j(\mathbf{D}\Phi^j)(h).$$
The above is then reduced to show that:
$$\langle \mathbf{D}^k\Phi(T_{\hat{h}}\tilde{\mathbf{B}}^N), \ell_{\alpha _1}\otimes\ldots\otimes \ell_{\alpha _k}\rangle_{\ch ^{\otimes k}}\to \langle \mathbf{D}^k\Phi(\hat{h}), \ell_{\alpha _1}\otimes\ldots\otimes \ell_{\alpha _k}\rangle_{\ch ^{\otimes k}} $$
in probability and uniformly in $z$ for $|z|\leq 1$, which follows from Theorem \ref{th: rough-approx2}, continuity of $\mathbf{D}^k_{\ell_{\alpha _1}\ldots \ell_{\alpha _k}}\Phi(\cdot)$ in the rough path topology and the fact that $z$ takes values in a compact set.  The proof is completed.
\end{proof}

\section{Upper bounds for the density}
\label{sec:upper-bounds}

The aim of this section is to study upper bounds for the density of the solution to equation~(\ref{eq:sde}), where $B$ is a fractional Brownian motion with Hurst parameter
$H>\frac14$. Specifically, we shall prove Theorem \ref{thm:upper-bnd-density} under our elliptic Hypothesis \ref{hyp:elliptic}. 

\smallskip

Our starting point here is the integration by parts type formula given at Proposition~\ref{density}. According to this relation applied to $F=X_t^x$ and $\sigma=\{i \in \{1,\ldots,n\}: y^i \geq 0\}$, and applying inequality (\ref{Holder}) with $k=n, p=2, r=q=4$, we obtain
the following general upper bound for the density $p_t$ of $X_t^x$:
\begin{equation} \label{bound}
p_t(y) \leq c  \, \mp(\vert X^x_t-x \vert \geq \vert y-x \vert )^{1/2} \, \Vert \gamma^{-1}_t \Vert^n_{n, 2^{n+2}}  \, \Vert \bd X_t^x\Vert^n_{n, 2^{n+2}}, \; \;  \text{for all } y \in \R^n,
\end{equation}
where $\gamma_t$ denotes the Malliavin matrix of $X_t^x$.
We shall bound separately the 3 terms in relation \eqref{bound}: first, a direct application of inequality \eqref{eq:concentration-X} yields
\begin{equation}\label{eq:concentration-X-regular}
 \mp(\vert X^x_t -x\vert \geq \vert y-x \vert ) \leq \exp \left(-\frac{\vert y-x \vert^{2H+1 \wedge 2}}{c \, t^{2H} }  \right).
\end{equation}
Next, we prove that there exist constants $c_3$ and $c_4$ such that for all $m \in \mathbb{N}$ and $p>1$,
\begin{align} 
 \label{derivative}
 \Vert \bd X_t^x\Vert_{m, p} &\leq c_3 \,  t^{H}
 \\ \label{gamma}
\Vert \gamma^{-1}_t \Vert_{m, p} &\leq c_4 \, t^{-2H}. 
\end{align}
Plugging relations \eqref{eq:concentration-X-regular}-\eqref{gamma} into (\ref{bound}), this will conclude  the proof of Theorem \ref{thm:upper-bnd-density}.

We start with the estimate \eqref{derivative}.

\begin{lemma}\label{th:DX mp norm}
Let $H>\frac{1}{4}$. Denote by $X_t^x$ the solution to equation \textnormal{(\ref{eq:sde})}.  One has
$$\|\bd X_t^x\|_{m,p}\leq c_{m,p} t^{H},
$$
for some constant $c_{m,p}>0$. 
\end{lemma}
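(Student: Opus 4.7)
The plan is to bound $\|\mathbf{D}^{k+1} X_t^x\|_{\mathcal{H}^{\otimes(k+1)}}$ in $L^p(\Omega)$ by $c_{k,p}\, t^{(k+1)H}$ for every $k=0,\ldots,m$, and then conclude with the trivial observation that $t^{(k+1)H}\le t^H$ for $t\in(0,1]$ and $k\ge 0$, which already gives the desired estimate.

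First I would handle the case $k=0$. From Proposition \ref{prop:deriv-sde} we have
$$
\mathbf{D}_s^j X_t^x = \mathbf{J}_{s,t}\, V_j(X_s^x)\, \mathbf{1}_{[0,t]}(s),\qquad j=1,\ldots,d,
$$
so that
$$
\|\mathbf{D} X_t^x\|_{\mathcal{H}}^2 = \sum_{j=1}^d \bigl\| s\mapsto \mathbf{J}_{s,t} V_j(X_s^x)\mathbf{1}_{[0,t]}(s)\bigr\|_{\mathcal{H}}^2.
$$
I would then apply the upper bounds of Proposition \ref{interpolation} term by term. In the regular regime $H>1/2$, this gives
$$
\|\mathbf{D} X_t^x\|_{\mathcal{H}}^2 \le c\, t^{2H}\sum_{j=1}^d \sup_{s\le t}|\mathbf{J}_{s,t} V_j(X_s^x)|^2 \le c\,t^{2H}\, \|V\|_\infty^2\,\|\mathbf{J}\|_{\infty}^2\,\|\mathbf{J}^{-1}\|_{\infty}^2.
$$
In the irregular regime $H\in(1/4,1/2]$, Proposition \ref{interpolation} would require a $\gamma$-H\"older bound with $\gamma>1/2-H$; note that $H>1/4$ ensures the existence of such a $\gamma$ strictly smaller than $H$, and the integrand $s\mapsto \mathbf{J}_{s,t}V_j(X_s^x)$ is $\gamma$-H\"older by the rough-paths estimate \eqref{eq:bnd-X-pvar} combined with Hypothesis \ref{hyp:regularity-V}. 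In either regime, the pathwise norms that appear have $L^p(\Omega)$-moments of any order by Propositions \ref{prop:exp-moments-rdes} and \ref{prop:moments-jacobian} (applied both to $\mathbf{J}$ and to the analogous equation satisfied by $\mathbf{J}^{-1}$, which has the same structure with $C_b^\infty$ coefficients). Taking $L^p$ norms then yields $\|\mathbf{D} X_t^x\|_{L^p(\Omega;\mathcal{H})}\le c_p\,t^H$.

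For higher orders $k\ge 1$, I would differentiate the RDE \eqref{eq:sde} iteratively: $\mathbf{D}^{k+1} X_t^x$ is given, after sorting $s_1\le\cdots\le s_{k+1}$, by an explicit expression in terms of the partial Jacobians $\mathbf{J}_{s_i,s_{i+1}}$ and of the first $k+1$ derivatives of the $V_j$'s evaluated along $X^x$. All these auxiliary processes solve linear RDEs driven by $B$ with $C_b^\infty$ coefficients, hence by (variants of) Proposition \ref{prop:moments-jacobian} their $p$-variation norms have moments of all orders. Applying Proposition \ref{interpolation} once for each of the $k+1$ variables $s_i$, each integration in $\mathcal{H}$ produces a factor $t^{2H}$, yielding
$$
\mathbb{E}\bigl[\,\|\mathbf{D}^{k+1} X_t^x\|_{\mathcal{H}^{\otimes (k+1)}}^{p}\bigr]^{1/p}\;\le\; c_{k,p}\, t^{(k+1)H}\;\le\; c_{k,p}\, t^{H},
$$
where the last step uses $t\in(0,1]$. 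Summing in $k=0,\ldots,m$ gives the claim.

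The main technical obstacle is the iterated differentiation step: one must make sure, for $H\in(1/4,1/2]$, that each intermediate process to which Proposition \ref{interpolation} is applied is H\"older continuous of some exponent $\gamma>1/2-H$, and that this H\"older norm has the right moments. This is really a matter of unwinding the formulas for $\mathbf{D}^{k+1}X^x_t$ and invoking, at each step, the rough-paths moment estimates of Propositions \ref{prop:exp-moments-rdes}--\ref{prop:moments-jacobian} applied to the relevant linear RDEs; the only genuinely new input compared to the case $k=0$ is bookkeeping, not a new analytic ingredient.
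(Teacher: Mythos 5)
Your proposal takes a genuinely different route from the paper. For the first Malliavin derivative your plan is sound: writing $\mathbf{D}_s^j X_t^x=\mathbf{J}_{s,t}V_j(X_s^x)\mathbf{1}_{[0,t]}(s)$, applying the upper bound of Proposition \ref{interpolation} to the single-variable function $s\mapsto\mathbf{J}_{s,t}V_j(X_s^x)$, and then invoking the moment bounds of Propositions \ref{prop:exp-moments-rdes} and \ref{prop:moments-jacobian} (and the observation that $1/2-H<H$ iff $H>1/4$) gives the $t^H$ factor cleanly and arguably more transparently than the paper does.

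However, there is a genuine gap in your treatment of the higher derivatives, and it is precisely the obstacle that drives the paper to use Inahama's device of an independent copy $\hat B$. Proposition \ref{interpolation} is a statement about $\|f\mathbf{1}_{[0,t]}\|_{\mathcal H}$ for a single-variable $\gamma$-H\"older function $f$; it says nothing about $\|K\|_{\mathcal H^{\otimes(k+1)}}$ for a multi-variable kernel $K$. The kernels $\mathbf{D}^{k+1}_{s_1,\ldots,s_{k+1}}X^x_t$ are \emph{not} tensor products in the $s_i$, so you cannot literally ``apply Proposition \ref{interpolation} once per variable.'' For $H>1/2$ this can be repaired by hand using the double-integral representation \eqref{eq:inner-pdt-H-smooth} of the inner product, because $|u-v|^{2H-2}$ is an honest positive kernel and one obtains $\|K\mathbf{1}_{[0,t]^{k+1}}\|_{\mathcal H^{\otimes(k+1)}}\le c\,t^{(k+1)H}\|K\|_\infty$ by Tonelli. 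But for $H\in(1/4,1/2]$ the inner product in $\mathcal H$ is a Young integral, the norm is controlled via $p$-variation (Proposition \ref{prop:imbed-bar-H}), and extending this control to $\mathcal H^{\otimes(k+1)}$ requires a genuine multi-parameter $p$-variation/fractional-derivative analysis of the kernel in each of the $k+1$ variables simultaneously. Dismissing this as ``only bookkeeping'' is where your argument breaks down: it is exactly the non-trivial step.

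The paper sidesteps this entirely. Inahama's trick replaces $\|\mathbf{D}^{k}X^x_t\|_{\mathcal H^{\otimes k}}$ by $\hat{\mathbb E}[|\Xi_k(t)|^2]^{1/2}$, where $\Xi_k(t)$ is a rough integral driven by the enlarged rough path $(B,\hat B,X^x,\mathbf{J},\mathbf{J}^{-1},\ldots)$; the Hilbert-space norm is thus converted into a pathwise, scalar quantity, and the factor $t^H$ is extracted by the scaling of $\|\mathbf{B}\|_{p\text{-var};[0,t]}$ and $\|\hat{\mathbf{B}}\|_{p\text{-var};[0,t]}$. This works uniformly for all $k$ and all $H>1/4$ without ever having to estimate a tensor-power norm directly. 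Note also that the paper only obtains the rate $t^H$ for each order $k$, not your aspirational $t^{(k+1)H}$; the weaker bound is all the lemma needs, and it is what Inahama's scaling argument actually delivers without further work. So: your approach is a valid and cleaner alternative for $m=0$ (and, with some extra arguments, for $m\ge1$ in the regime $H>1/2$), but for $m\ge 1$ and $H\le 1/2$ you need to replace the bookkeeping hand-wave with a real argument or switch to the independent-copy technique.
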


\begin{proof}

We use a method by Inahama \cite{Inahama} to which we refer for more details. For simplicity, we assume $V_0=0$, and first show for $m=1,2$. The case $V_0 \neq 0$ is treated similarly. Recall $\mathbf{J}$ is the Jacobian process. 

Let $\hat{B}=(\hat{B}_1,...,\hat{B}_d)$ be an independent copy of $B$ and consider $2d$-dimensional fractional Brownian motion $(B, \hat{B}).$ The expectation with respect to $B$ and $\hat{B}$ are denoted by $\me$ and $\hat{\me}$.
Set
$$\Xi_1(t)=\mathbf{J}_t\int_0^t\mathbf{J}^{-1}_sV(X_s^x)d\hat{B}_s,$$
and
\begin{align*}
\Xi_2(t)=&\mathbf{J}_t\int_0^t\mathbf{J}^{-1}_s\{D^2V(X_s^x)\langle \Xi_1(s), \Xi_1(s), dB_s\rangle +2DV(X_t^x)\langle \Xi_1(s), d\hat{B}_s\rangle\}.
\end{align*}
More generally, we can construct a $\Xi_m$ by induction (see \cite{Inahama}). 
Then one can show that,
$$\|\bd X_t^x\|_{\mathcal{H}\otimes\mr^n} \le C \hat{\me}(\vert\Xi_1(t)\vert^2)^{1/2},$$
$$\|\bd^2 X_t^x\|_{\mathcal{H}\otimes\mathcal{H}\otimes\mr^n}\le C \hat{\me} (\vert \Xi_2(t) \vert^2)^{1/2}.$$
We now estimate $\Xi_1$ and $\Xi_2$ by using rough paths theory. Let 
\begin{align}\label{rough M}
M=(B, \hat{B}, X^x, \mathbf{J}, \mathbf{J}^{-1}).
\end{align}
 This is a $p$-rough path, $p>1/H$. The integral $\int \mathbf{J}^{-1}_sV(X_s^x)d\hat{B}_s$ is a rough integral of the type $\int f(M) d\mathbf{M}$, where $f$ has a polynomial growth. We deduce  the bound
\[
| \Xi_1 (t) -\Xi_1(s)| \le C (1+\| \mathbf{M}\|_{p-var,[0,1]})^r \| \mathbf{M}\|_{p-var,[s,t]}.
\]
We now estimate  $\|\mathbf{M}\|_{p-var,[s,t]}$. Denote by $D(t)$ a subdivision of the interval $[0,t]$. Define
\begin{align*}
\mathcal{M}_{\alpha,t,p}=\sup_{D(t)=(t_i); \| \mathbf{B}\|^p_{p-var,[t_i,t_{i+1}]}\leq \alpha}\sum_{i: t_i\in D(t)} \|\mathbf{B}\|^p_{p-var,[t_i,t_{i+1}]}.
\end{align*}
Then the Jacobian $\bj$ satisfies the following growth-bound:
\begin{align*}
\|\bj\|_{p-{\rm var}; [0,t]} +\|\bj^{-1}\|_{p-{\rm var}; [0,t]} \leq C \, \| \mathbf{B}\|_{p-var,[0,t]}\exp\left({C \mathcal{M}_{\alpha,t,p}}\right).
\end{align*}
For some constant $c$ (cf Proposition 4.11 in \cite{CLL}), we have $M_{\alpha,t,p}\leq c(N_{\alpha,t,p}+1)\alpha.$
Hence we obtain a bound for $\Vert \bj \Vert_{p-{\rm var};[0,t]}$ of the form:
\begin{equation}\label{eq:upp-bnd-J-with-B-p-var}
\|\bj\|_{p-{\rm var}; [0,t]} +\|\bj^{-1}\|_{p-{\rm var}; [0,t]} \leq C \, \| \mathbf{B}\|_{p-var,[0,t]}\exp\left({C N_{\alpha,t,p}}\right).
\end{equation}
We eventually  deduce a bound of the form
\[
| \Xi_1 (t) | \le C (1+\| \mathbf{M}\|_{p-var,[0,1]})^r (\| \mathbf{B}\|_{p-var,[0,t]}+\| \hat{\mathbf{B}}\|_{p-var,[0,t]})\exp\left({C N_{\alpha,t,p}}\right).
\]
By scaling we have $\| \mathbf{B}\|_{p-var,[0,t]}+\| \hat{\mathbf{B}}\|_{p-var,[0,t]}\stackrel{law}{=} t^H (\| \mathbf{B}\|_{p-var,[0,1]}+\| \hat{\mathbf{B}}\|_{p-var,[0,1]})$.
The proof is thus completed for the case $m=1$. In the same way, we estimate $\Xi_2$ as a rough integral of the type $\int \phi (M_1) d\mathbf{M}_1$ where $\phi$ has polynomial growth and $M_1$ is the rough path
\[
M_1 =(B, \hat{B}, X^x, \mathbf{J}, \mathbf{J}^{-1},\Xi_1)
\]
Arguing as before and using previous estimates we obtain then a bound of the same type:
\[
| \Xi_2 (t) | \le C (1+\| \mathbf{M}\|_{p-var,[0,1]})^r (\| \mathbf{B}\|_{p-var,[0,t]}+\| \hat{\mathbf{B}}\|_{p-var,[0,t]})\exp\left({C N_{\alpha,t,p}}\right).
\]
Higher order Malliavin derivative are treated similarly.
\end{proof}

\subsection{The regular case}
\label{sec:upper-bnd-regular}

In this section we treat the case where $B$ is a fractional Brownian motion with Hurst parameter $H >\frac12$. In this situation, the stochastic integral in (\ref{eq:sde})
can be seen as a Young integral instead of the general rough paths type integral invoked at Proposition \ref{prop:moments-sdes-rough}. Moreover, the proof of our upper bound can be summarized as follows:

\begin{proof}[Proof of Theorem \ref{thm:upper-bnd-density} in the regular case]
Recall that under the elliptic Hypothesis \ref{hyp:elliptic} and assuming $H>1/2$ we wish to show that
\begin{equation}\label{eq:exp-bound-regular}
p_t(y) \leq c_{2} \, t^{-nH} \exp \left(-\frac{\vert y-x \vert^2}{c_{1} t^{2H} }  \right), \; \;  \text{for all } y \in \R^n.
\end{equation}

\smallskip

The proof of  (\ref{derivative}) is treated in a uniform way for both the regular and irregular cases in Lemma \ref{th:DX mp norm}.  Hence let let us concentrate here on the proof of (\ref{gamma})  for $0 < t \le 1$. Let 
\[
C_t= \int_0^t \int_0^t \mathbf{J}_{u}^{-1} V(X_u^x)V(X_u^x)^*(\mathbf{J}_{u}^{-1} )^* |u-v|^{2H-2} du dv.
\]
Our bound \eqref{gamma} is now reduced to prove that 
\begin{equation}\label{eq:low-bnd-Sigma-t}
y^{*} C^{-1}_t y \le M t^{-2H} \, |y|^{2}, 
\quad \text{for} \quad
y\in\R^{n},
\end{equation}
for a given random variable $M$ admitting moments of any order. To this aim, notice first that
\begin{equation*}
y^{*} C_t y = 
\int_0^1 \int_0^1 
\lla f_{u} , \, f_{v} \rra_{\R^{n}}
|u-v|^{2H-2} \, du dv,
\quad \text{with} \quad
f_{u}\equiv \mathbf{1}_{[0,t]}(u) V(X_{u}^x)^*(\mathbf{J}_{ u}^{-1} )^* y.
\end{equation*}
Furthermore, thanks to the interpolation inequality  of Proposition \ref{interpolation} applied with $\gamma >H-\frac{1}{2}$, we have 
\begin{equation}\label{eq:interpolation-H-L2}
\int_0^1 \int_0^1 \langle f_{u}, f_{v} \rangle |u-v|^{2H-2}dudv  \ge ct^{2H} \frac{\min_{[0,1]} | f|^4 }{\|f \|_\infty^2+\| f \|^2_{\gamma}},
\end{equation}
where $\| f \|_{\gamma}$ is the $\gamma$-H\"older norm of $f$ on the interval $[0,1]$ as defined at \eqref{eq:def-holder-norms}. Furthermore, since the uniform ellipticity condition $| V(x) y |^2 \ge \lambda | y |^2$ holds true, it is readily checked that
\begin{equation}\label{eq:bnd-f}
|f_v|^{2} \ge \la \, |\mathbf{J}_{ v}^{-1} y |^{2}\ge \la \, \|\mathbf{J}_{ v}\|^{-2} | y |^{2},
\quad \text{and} \quad
\|f \|_\infty+ \| f \|_{\gamma} \le c\, (1+\|X\|_{\gamma}) (1+\|\bj^{-1}\|_{\gamma}) | y |.
\end{equation}
Plugging these relations into \eqref{eq:interpolation-H-L2} we deduce that for every  $y \in \mathbb{R}^n$,
\[
 y^*C^{-1}_t y \le c t^{-2H}  \,  (1+\|X\|_{\gamma})^{2} (1+\|\bj^{-1}\|_{\gamma})^{2} (1+ \| \bj\|_{\gamma})^4
\,  \vert y\vert^2,
\]
from which \eqref{eq:low-bnd-Sigma-t}, and thus (\ref{gamma}), are easily deduced.

\smallskip

For the bound of Malliavin derivatives of $\gamma_t^{-1}$, note that we have 
\begin{align}\label{D gamma}
\bd(\gamma_t^{-1})^{ij}=-\sum_{k,l=1}^d(\gamma_t^{-1})^{ik}(\gamma_t^{-1})^{lj}\bd\gamma_t^{kl}.
\end{align}
Therefore
\begin{align*}
\|\bd(\gamma_t^{-1})^{ij}\|_\ch\leq |(\gamma_t^{-1})^{ik}(\gamma_t^{-1})^{lj}|(\|\bd X_t\|_\ch+\|\bd^2X_t\|_{\ch^{\otimes 2}})^2.
\end{align*}
Together with the estimates for $\|\bd X_t\|_{m,p}$ and $\|\gamma_t^{-1}\|_{p}$ that have been established above, we have
$$\|\bd\gamma^{-1}_t\|_{1,p}\leq c \, t^{-2H}.
$$
Similarly, by using equation (\ref{D gamma}) repeatedly, we  conclude that  for each $m\in\mathbb{N}$ and $p>1$ there exists a constant $c_{m,p}$  such that 
$$\|\bd\gamma^{-1}_t\|_{m,p}\leq c_{m,p} \, t^{-2H}.$$
\end{proof}

\subsection{The irregular case }

The aim of this section is to extend the results of the last section to the case where $B$ is a fractional Brownian motion with Hurst parameter $H \in(\frac14,\frac12)$.
For this, tools of rough paths theory are required to obtain the  sub-Gaussian bound \eqref{eq:exp-bound-irregular}.  

From the discussion above it is clear that, in order to conclude the correct asymptotic behavior (as $t\downarrow 0$) in the upper bound for the density function, we need to establish (\ref{derivative})  and (\ref{gamma}) for the irregular case.   We first prove (\ref{derivative}) in both the regular and irregular cases.

The counterpart of (\ref{gamma}) in the rough case is the content of the following lemma. 

\begin{lemma}\label{th:gamma 0p norm}
Let $\frac{1}{4}<H<\frac{1}{2}$. Denote by $X_t^x$ the solution to equation \textnormal{(\ref{eq:sde})}, and $\gamma_t$ the Malliavin matrix of $X_t^x$. Under Hypothesis \textnormal{\ref{hyp:elliptic}}, there exists a constant $c_{m,p}>0$ such that for all $t\in(0,1]$ one has
$$\|\gamma^{-1}_t\|_{m,p}\leq c_{m,p} \, t^{-2H}.
$$
\end{lemma}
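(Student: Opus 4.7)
The plan is to parallel the regular-case argument given above for Theorem \ref{thm:upper-bnd-density}, replacing the use of the first bullet of Proposition \ref{interpolation} by its $H\leq 1/2$ counterpart. As in that proof, I would reduce the bound $\|\gamma_t^{-1}\|_{m,p}\leq c_{m,p} t^{-2H}$ to two sub-statements: (a) a pointwise quadratic-form bound
\[
y^{*} \gamma_t y \;\geq\; c\, t^{2H}\, R_t^{-1}\, |y|^{2}, \qquad y \in \R^n,
\]
where $R_t$ is a positive random variable admitting all moments uniformly in $t \in (0,1]$; together with (b) iterated control of $\bd(\gamma_t^{-1})$ via the identity $\bd(\gamma_t^{-1})^{ij} = -\sum_{k,l}(\gamma_t^{-1})^{ik}(\gamma_t^{-1})^{lj}\bd\gamma_t^{kl}$ combined with Lemma~\ref{th:DX mp norm}. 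Step (b) is handled exactly as in the regular case, so I would concentrate on (a).

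For the pointwise bound, using Proposition \ref{prop:deriv-sde} I would write $y^{*}\gamma_t y = \|\mathbf{1}_{[0,t]}(\cdot)\,V(X_{\cdot}^{x})^{*}\bj_{\cdot,t}^{*} y\|_{\ch}^{2}$ where the $\mathcal{H}$-norm is taken on $\R^d$-valued functions and the integrand is $f_s := V(X_s^x)^{*}\bj_{s,t}^{*} y$. Choosing $\gamma \in (1/2-H, H)$, which is non-empty precisely because $H>1/4$, the function $f$ is almost surely $\gamma$-H\"older on $[0,1]$, since both $X^{x}$ and the inverse Jacobian $\bj^{-1}$ enjoy this regularity. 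Proposition \ref{interpolation} in the $H\leq 1/2$ case, extended componentwise to $\R^d$-valued functions by applying the scalar inequality $\|f\|_{\ch}^{2}\geq c\|f\|_{L^{2}}^{2}$ coordinate-by-coordinate and summing, yields after the usual rescaling argument
\[
y^{*}\gamma_t y \;\geq\; c_{1}\, t^{2H}\, \min_{s\in[0,1]} |V(X_s^x)^{*}\bj_{s,t}^{*} y|^{2}.
\]
Invoking the uniform ellipticity of Hypothesis \ref{hyp:elliptic} together with the identity $\bj_{s,t}^{-1}=\bj_s\bj_t^{-1}$ and the operator-norm bound $|\bj_{s,t}^{*}y|\ge \|\bj_{s,t}^{-1}\|^{-1}|y|$ then gives
\[
y^{*}\gamma_t y \;\geq\; c_1\lambda\, t^{2H}\, \|\bj_t^{-1}\|^{-2}\,\bigl(\sup_{s\in[0,1]}\|\bj_s\|\bigr)^{-2}|y|^{2},
\]
so the natural choice is $R_t := \|\bj_t^{-1}\|^{2}\bigl(\sup_{s\in[0,1]}\|\bj_s\|\bigr)^{2}$.

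It then remains to verify that $R_t$ has moments of every order, uniformly in $t \in (0,1]$. The factor $\sup_{s}\|\bj_s\|$ is controlled directly by Proposition \ref{prop:moments-jacobian}, since $\sup_s\|\bj_s\|\leq 1 + \|\bj\|_{p\textnormal{-var};[0,1]}$. The main obstacle I anticipate lies in the treatment of $\|\bj_t^{-1}\|$: the inverse Jacobian solves an RDE of the same structural type as $\bj$, and the rough-paths argument used to derive Proposition \ref{prop:moments-jacobian} adapts to yield $\EE\bigl[\|\bj^{-1}\|_{p\textnormal{-var};[0,1]}^{\eta}\bigr]<\infty$ for every $\eta\geq 1$; this is in fact implicitly used in Lemma \ref{th:DX mp norm} through the estimate \eqref{eq:upp-bnd-J-with-B-p-var}. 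Once $\sup_{t\in(0,1]}\EE[R_t^q]<\infty$ is established for every $q\geq 1$, combining it with the pointwise bound of step (a) produces $\|\gamma_t^{-1}\|_{L^p}\leq c_p t^{-2H}$, and the Malliavin-derivative estimate follows from the iteration of step (b), completing the proof.
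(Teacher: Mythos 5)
Your argument is correct and matches the paper's strategy: apply the $H\le 1/2$ interpolation bound of Proposition \ref{interpolation} (the rescaling yielding the $t^{2H}$ factor), use the ellipticity condition and operator-norm bounds on the Jacobian to peel off a random factor $R_t$ with all moments uniformly in $t$, and then iterate \eqref{D gamma} together with Lemma \ref{th:DX mp norm} to handle $m\ge 1$. The only cosmetic difference is that you bound $\gamma_t$ directly, so $f_s = V(X_s^x)^{*}\bj_{s,t}^{*}y$ and $\|\bj_t^{-1}\|$ appears explicitly in $R_t$, whereas the paper works with the reduced Malliavin matrix $C_t=\bj_t^{-1}\gamma_t(\bj_t^{-1})^{*}$ (so $f_s=V(X_s^x)^{*}(\bj_s^{-1})^{*}y$) and leaves the conjugation by $\bj_t$ implicit; both routes rely on the same moment controls for $\bj$ and $\bj^{-1}$.
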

\begin{proof}
We first prove the lemma for $m=0$.   As before the bound we want to prove is reduced to prove that 
\begin{equation}\label{eq:low-bnd-Sigma-t2}
y^{*} C^{-1}_t y \le M t^{-2H} \, |y|^{2}, 
\quad \text{for} \quad
y\in\R^{n},
\end{equation}
for a given random variable $M$ admitting moments of any order, where, again, $C$ is the reduced Malliavin matrix defined by
\begin{equation*}
y^{*} C_t y = \| f \|_\ch^2
\quad \text{with} \quad
f_{u}\equiv \mathbf{1}_{[0,t]}(u) V(X_{u}^x)^*(\mathbf{J}_{ u}^{-1} )^* y.
\end{equation*}
From the inequality  of Proposition \ref{interpolation} and the uniform ellipticity assumption, we have thus,
\[
 y^*C^{-1}_t y \le c t^{-2H}   (1+ \| \bj\|_{\gamma})^2
\,  \vert y\vert^2,
\]

This yields the claimed result when $m=0$.

\smallskip

For $m\geq1$, note that by Lemma \ref{th:DX mp norm} and what we have just proved,  there exist constants $c_{m,p}$ and $c_{p}$ such that $ \Vert \bd X_t^x\Vert_{m,p} \leq c_{m,p}t^H$ and $\Vert \gamma^{-1}_t \Vert_{p} \leq c_{p} t^{-2H}$.
Putting this together with relation \eqref{D gamma} and along the same lines as in the smooth case, we can conclude that there exists a constant $c_{m,p}$ such that
$\|\bd\gamma^{-1}_t\|_{m,p}\leq c_{m,p} t^{-2H},$
for all $m\in\mathbb{N}$ and $p> 1$. 

\end{proof}

We can now prove our sub-Gaussian upper bound for the density $p_t(\cdot)$ of $X_t^x$ in the rough case:

\begin{proof}[Proof of Theorem \ref{thm:upper-bnd-density} in the irregular case]
Owing to inequality \eqref{eq:concentration-X}, we have
\begin{equation*}
 \mp(\vert X^x_t -x\vert \geq \vert y-x \vert ) \leq \exp \left(-\frac{2\vert y-x \vert^{2H+1}}{c \, t^{2H} }  \right).
\end{equation*}
Now the proof follows from (\ref{bound}),  and Lemmas \ref{th:DX mp norm} and \ref{th:gamma 0p norm} just as in the smooth case.

\end{proof}

\begin{remark}
In order to prove Theorem \ref{thm:upper-bnd-density}, we could also have used the new expression for the density of a non-degenerate random vector obtained recently by Bally and Caramellino in \cite{BC}. This expression involves the Poisson kernel, and only requires the random vector to be twice differentiable in the Malliavin sense, in comparison with Proposition \ref{density} where higher derivatives are involved. However, we have not included the details of this strategy here, since it yields some slightly non optimal coefficients in relation \eqref{eq:exp-bound-irregular}.
\end{remark}

\section{Hitting probabilities and capacities}
\label{sec:hitting-probabilities}

We now turn to the evaluation of hitting probabilities for our differential system \eqref{eq:sde-intro}, that is the proof of relation \eqref{eq:bnd-hitting-regular} in Theorem \ref{thm:hitting-capacity-X}. It should be noticed that the upper and lower bounds in those relations require a different methodology, and this is why they shall be studied in two separate sections.

\subsection{Lower bounds on hitting probabilities}
As established in  \cite[Theorem 2.1]{DKN}, the lower bound in \eqref{eq:bnd-hitting-regular} can be derived from a general result for the hitting probabilities of a continuous stochastic process in terms of its finite-dimensional density functions. We shall prove this general relation in our fBm context for the sake of clarity.

\smallskip

Specifically, suppose that $(u_{t}, t\geq 0)$ is a continuous stochastic process in $\R^n$, such that the random vector $(u_{t}, u_{s})$  
has a joint probability density function $p_{s,t}(\cdot\,,\cdot)$,
for all $s,t>0$ such that $s \neq t$. As in the previous sections, we will also denote by $p_t(\cdot)$ the density of $u_{t}$, for all $t>0$. We work under the following set of hypotheses:

\smallskip

\noindent\textbf{(A1)}
For all $0<a<b$ and $M>0$, there exists a positive constant $C
        = C(a,b,M,n)$ such that for all $z\in[-M\,,M]^n$,
        \begin{equation*}
             \int_a^b p_{t}(z) dt \geq C.
        \end{equation*}

\smallskip

\noindent\textbf{(A2)}
There exist $\beta>0$, $H \in (0,1)$ and $p>\beta$ such that for all
        $0<a<b$, $M>0$, one can find a constant $c = c(a,b,\beta,H,M,n,p)>0$
        such that for all $s,t \in [a,b]$
        with $s \neq t$,
        and for every $z_1,z_2\in[-M\,,M]^n$,
\begin{equation*} 
p_{s,t}(z_1, z_2) \leq  \frac{c}{\vert t-s\vert^{H \beta}} \left(
            \frac{\vert t-s \vert^H}{\vert x-y \vert} \wedge 1\right)^p.
\end{equation*}

\smallskip

With these assumptions in hand, our general result on hitting probabilities is the following:
\begin{theorem}\label{thm:cap:LB}
    Suppose {\bf (A1)} and {\bf (A2)} are met, and fix $0<a<b$ and $M >0$. 
Then there exists a strictly positive constant $c=c(a,b,\beta,H,M,n)$ such that for all compact sets $A\subseteq[-M\,,M]^n$,
            \begin{equation}\label{eq:low-bnd-cap-general}
                \mathbb{P}(u([a,b]) \cap A
                \neq\varnothing) \geq c\, 
                \textnormal{Cap}_{\alpha}(A).
            \end{equation}
where $\alpha=\beta-\frac{1}{H}$.
\end{theorem}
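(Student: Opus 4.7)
The plan is to establish~\eqref{eq:low-bnd-cap-general} via the classical second-moment (Paley--Zygmund) scheme for hitting probabilities, in the spirit of~\cite{DKN}. It is enough to treat the case $\textnormal{Cap}_\alpha(A)>0$, and then by~\eqref{eq:def-capacity} I would fix a probability measure $\mu\in\mathcal{P}(A)$ with $\mathcal{E}_\alpha(\mu)\le 2/\textnormal{Cap}_\alpha(A)$. I would introduce the approximate identity $\phi_\varepsilon:=|B(0,\varepsilon)|^{-1}\mathbf{1}_{B(0,\varepsilon)}$ and the mollified occupation variable
\[
J_\varepsilon = \int_a^b \int_{\R^n} \phi_\varepsilon\lp u_t-z\rp \mu(dz)\,dt,
\]
whose support is chosen so that $\{J_\varepsilon>0\}\subseteq\{u([a,b])\cap A^{(\varepsilon)}\ne\varnothing\}$, with $A^{(\varepsilon)}$ the $\varepsilon$-neighborhood of $A$. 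By continuity of $u$ and compactness of $A$, the events on the right decrease to $\{u([a,b])\cap A\ne\varnothing\}$ as $\varepsilon\downarrow 0$, so it suffices to bound $\mathbb{P}(J_\varepsilon>0)$ from below uniformly in small $\varepsilon$.

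The first step is the first-moment estimate: after Fubini,
\[
\EE[J_\varepsilon] = \int_{A} \int\phi_\varepsilon(y-z)\lp\int_a^b p_t(y)\,dt\rp dy\,\mu(dz),
\]
and for $\varepsilon\le M$ the support of $\phi_\varepsilon(\cdot-z)$ lies inside $[-2M,2M]^n$, so Assumption~\textbf{(A1)} applied on this enlarged cube yields $\EE[J_\varepsilon]\ge c_1>0$ uniformly in $\varepsilon$. The heart of the proof is the matching second-moment bound. Expanding $J_\varepsilon^2$ and invoking Assumption~\textbf{(A2)} reduces the question to estimating
\[
\int_a^b\!\int_a^b |t-s|^{-H\beta}\Bigl(\tfrac{|t-s|^H}{|y_1-y_2|}\wedge 1\Bigr)^{\!p}\!ds\,dt,
\]
which I would evaluate by substituting $u=|t-s|$ and splitting the range at $u=|y_1-y_2|^{1/H}$; this produces a constant multiple of the Newtonian kernel $K_\alpha(|y_1-y_2|)$ with $\alpha=\beta-1/H$, where the three regimes $H\beta>1$, $H\beta=1$, $H\beta<1$ match precisely the three cases in~\eqref{kernel}, and the assumption $p>\beta$ is exactly what makes the near-diagonal piece integrable. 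A short convolution (or Fourier) argument then identifies the remaining integration as $\mathcal{E}_\alpha(\phi_\varepsilon\ast\mu)\le \mathcal{E}_\alpha(\mu)$, giving $\EE[J_\varepsilon^2]\le c_2\,\mathcal{E}_\alpha(\mu)$ uniformly in $\varepsilon$.

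Combining these two moment bounds via Paley--Zygmund yields $\mathbb{P}(J_\varepsilon>0)\ge c_1^2/(c_2\,\mathcal{E}_\alpha(\mu))$, and letting $\varepsilon\downarrow 0$ and then infimizing over $\mu\in\mathcal{P}(A)$ produces~\eqref{eq:low-bnd-cap-general}. The main obstacle I anticipate is the uniform-in-$\varepsilon$ second-moment estimate handled simultaneously across the three sign regimes of $\alpha$: one must verify that mollification does not increase the $\alpha$-energy, where Fourier monotonicity takes care of the case $\alpha>0$, the logarithmic case $\alpha=0$ has to be treated by hand, and $\alpha<0$ is immediate because $K_\alpha\equiv 1$. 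Once this potential-theoretic ingredient is in place, everything else is a standard transcription of the second-moment scheme.
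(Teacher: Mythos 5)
Your proposal is correct and follows essentially the same second-moment scheme as the paper: mollified occupation functional, first-moment lower bound via \textbf{(A1)}, second-moment upper bound via \textbf{(A2)} combined with a pointwise time-integral estimate $\int_a^b\int_a^b |t-s|^{-H\beta}\lp\tfrac{|t-s|^H}{r}\wedge 1\rp^p ds\,dt\le C\,{\rm K}_\alpha(r)$, energy-monotonicity under mollification (\cite[Theorems B.1--B.2]{DKN}), and Paley--Zygmund. The only organizational difference is that the paper handles $\alpha<0$ as a separate case using a point mass $\delta_z$, whereas you fold it into the unified argument with $\mu\in\mathcal P(A)$ --- both are fine, since ${\rm K}_\alpha\equiv 1$ makes the energy bound trivial there.
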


\begin{proof}
We start by proving a technical lemma that gives
the relationship between the upper bound in {\bf (A2)} and the
Newtonian kernel ${\rm K}_{\alpha}$ defined by (\ref{kernel}).
\begin{lemma} \label{le}
Let $N>0$, $\beta>0$, $p>\beta$, $0 \leq a < b$, and $H \in (0,1)$ be fixed.
Then, there exists a positive constant $C=C(a,b,\beta,N,H,p)$ such 
that for all $r \in [0,N]$
\begin{equation}\label{eq:bnd-intg-expo}
\int_a^b  \int_a^b \frac{1}{\vert t-s \vert^{H \beta}} \left(
            \frac{\vert t-s\vert^H}{r} \wedge 1\right)^p ds dt\leq C \, 
            {\rm K}_{\alpha}( r ),
\end{equation}
where $\alpha=\beta-\frac{1}{H}$.
\end{lemma}

\begin{proof}
Fix $r \in [0, N]$ and use
the change of variables $u=t-s$, to see that
\begin{equation*}
\int_a^b \int_a^b   \frac{1}{\vert t-s \vert^{H \beta}} \left(
            \frac{\vert t-s\vert^H}{r} \wedge 1\right)^p ds dt
            \leq 2(b-a) \int_0^{b-a}  u^{-H \beta} \left(
            \frac{u^H}{r} \wedge 1\right)^p  du.
\end{equation*}
Next, the change of variables $v= \frac{u^H}{r}$ implies that the right hand side
equals
\begin{equation*}
C r^{-\alpha} F(m),
\quad\text{where}\quad
F(m):=\int_0^{m} 
v^{-\beta-1+\frac{1}{H}} (v \wedge 1)^p dv,
\end{equation*}
with the notation $m:=\frac{(b-a)^{H}}{r}$.
Observe that $m \geq m_1:=\frac{(b-a)^{H}}{N}>0$.
Hence, we can split $F(m)$ into $F(m)=F(m_1)+[F(m)-F(m_1)]$. 
Now clearly, we have $F(m_1) \leq c$, and if $\beta \neq \frac{1}{H}$, then
$$
F(m)-F(m_1)\leq \frac{m^{\frac{1}{H}-\beta}-m_1^{\frac{1}{H}-\beta}}{\frac{1}{H}-\beta}.
$$
Hence, if $\beta > \frac{1}{H}$, we get $F(m)-F(m_1) \leq c$;
if $\beta < \frac{1}{H}$, then 
$F(m)-F(m_1) \leq C r^{\beta-\frac{1}{H}}$; and if $\beta = \frac{1}{H}$, some similar elementary computations show that
$$
F(m)-F(m_1) \leq C \log(m)=c+ c' \log \left( \frac{1}{r} \right).
$$
Therefore, putting together these considerations we conclude the proof of relation \eqref{eq:bnd-intg-expo}, provided that the constant
$N_0$ in (\ref{kernel}) is sufficiently large.

\end{proof}

Let us now go back to the proof of Theorem \ref{thm:cap:LB}: fix a compact set $A\subseteq[-M\,,M]^n$ and observe that whenever $\textnormal{Cap}_{\alpha}(A)=0$, inequality \eqref{eq:low-bnd-cap-general} is trivially satisfied. In the remainder of the proof we thus assume $\textnormal{Cap}_{\alpha}(A)>0$. In particular,
this implies that $A \neq \varnothing$. We now consider three different cases:

\smallskip

\noindent
{\it Case 1:} $\beta<\frac{1}{H}$.  Then $\alpha<0$ and thus $\textnormal{Cap}_{\alpha}(A)=1$. Therefore,
it suffices to prove that there exists a positive constant $c=c(a,b,M,H, \beta,n)$ such that
\begin{equation} \label{step1}
\mathbb{P}(u([a,b]) \cap A
                \neq\varnothing) \geq c.
                \end{equation}
Towards this aim, for all $\epsilon \in (0,1)$ and $z \in \R^n$, consider the random variable
$$
J_{\epsilon}(z)=\frac{1}{(2\epsilon)^n} \int_a^b  {\bf 1}_{\tilde{B}(z, \epsilon)}(u_{t}) dt,
$$
where $\tilde{B}(z, \epsilon)=\{y \in \R^n: \vert z-y \vert < \epsilon\}$
and $\vert z \vert=\max_{1 \leq i \leq n} \vert z_i \vert$.
Assume now that $z \in A$. Our first aim is to prove that $\mathbb{P} ( J_{\epsilon}(z) >0) \geq C$, for a strictly positive constant $C$ independent of $\epsilon$. Indeed, Hypothesis {\bf (A1)} implies that there exists a positive constant $C(a,b,M,H,n)$ such that for all $\epsilon \in (0,1)$,
$$
\mathbb{E} \left[ J_{\epsilon}(z) \right]=\frac{1}{(2\epsilon)^n} \int_a^b
\int_{\tilde{B}(z, \epsilon)} p_t(v) dv dt \geq C.
$$
On the other hand, Hypothesis {\bf (A2)}  and Lemma \ref{le} imply that
there exists a positive constant $C(a,b,M,H,\beta, n)$ such that for all $\epsilon \in (0,1)$,
\begin{equation*} 
\begin{split}
\mathbb{E} \left[ J^2_{\epsilon}(z) \right]
&=\frac{1}{(2\epsilon)^{2n}} \int_a^b \int_a^b \int_{\tilde{B}(z, \epsilon)}
\int_{\tilde{B}(z, \epsilon)} p_{s,t}(z_1,z_2) dz_1 dz_2 ds dt  \\
&\le  \frac{c}{(2\epsilon)^{2n}} \int_{\tilde{B}(z, \epsilon)} \int_{\tilde{B}(z, \epsilon)}
K_{\alpha}(z_2-z_1) \, dz_1 dz_2
\leq c,
\end{split}
\end{equation*}
where the last inequality is due to the fact that $K_{\alpha}\equiv 1$ whenever $\alpha<0$. Therefore, from Paley-Zygmund inequality (cf. \cite[(2.26)]{DKN}), we conclude that
\begin{equation}\label{eq:low-bnd-P-J-eps}
\mathbb{P} \left( J_{\epsilon}(z) >0\right) \geq 
\frac{\mathbb{E} \left[ J_{\epsilon}(z) \right]^2}{\mathbb{E} \left[ J^2_{\epsilon}(z) \right]} \geq C,
\end{equation}
where $C$ is independent of $\epsilon$. Moreover, the left-hand side of \eqref{eq:low-bnd-P-J-eps} is bounded above by $\mathbb{P}(u([a,b]) \cap A_{\epsilon}
                \neq\varnothing)$, where 
                $A_{\epsilon}$ denotes the closed $\epsilon$-enlargement
                of $A$. Then we let $\epsilon \downarrow 0$ and use
                the continuity of the trajectories of $u$ to conclude that
                (\ref{step1}) holds true.

\smallskip

\noindent
{\it Case 2:} $\beta>\frac{1}{H}$. For all $\epsilon \in (0,1)$ and $\mu \in \mathcal{P}(A)$, consider the random variable
$$
J_{\epsilon}(\mu)=\frac{1}{(2\epsilon)^n} \int_{\R^n} \int_a^b  {\bf 1}_{\tilde{B}(z, \epsilon)}(u_{t}) \, dt \, \mu(dz).
$$
Then {\bf (A1)} implies the existence of a positive constant $C(a,b,M,H,n)$ such that
$$
\mathbb{E} \left[ J_{\epsilon}(\mu) \right] \geq C.
$$
In order to estimate the second moment of  $J_{\epsilon}(\mu)$,
we consider the function $$g_{\epsilon}(z)=(2\epsilon)^{-n}  {\bf 1}_{\tilde{B}(0, \epsilon)}(z),$$
so that we can write
$$
J_{\epsilon}(\mu)= \int_a^b  [g_{\epsilon} \ast \mu](u_{t}) dt .
$$
It is readily checked that
\begin{equation*}
\EE\lc J^2_{\epsilon}(\mu) \rc
= \int_{\R^{n}\times\R^{n}} [g_{\epsilon} \ast \mu](z_1) \, [g_{\epsilon} \ast \mu](z_2)
\lp \int_{[a,b]^{2}} p_{s,t}(z_1,z_2) \, ds dt \rp \, dz_1 dz_2,
\end{equation*}
and thus, owing to Hypothesis {\bf (A2)}  and Lemma \ref{le} we obtain that
there exists a positive constant $c=c(a,b,M,H,\beta,n)$ such that
$$
\mathbb{E} \left[ J^2_{\epsilon}(\mu) \right]
\leq c \, \mathcal{E}_{\alpha}(g_{\epsilon} \ast \mu),
$$
where we recall that the energy functional $\ce_{\alpha}$ has been defined by relation \eqref{eq:def-E-beta-mu}.
We now choose $\mu \in \mathcal{P}(A)$ such that $\mathcal{E}_{\alpha}(\mu)
\leq \frac{2}{\textnormal{Cap}_{\alpha}(A)}$. We also recall that, thanks to the general result~\cite[Theorem B.1]{DKN}, we have $\mathcal{E}_{\alpha}(g_{\epsilon} \ast \mu)\le \mathcal{E}_{\alpha}( \mu)$ for all $\epsilon \in (0,1)$. We thus obtain that:
$$
\mathbb{E} \left[ J^2_{\epsilon}(\mu) \right]
\leq  \frac{2c}{\textnormal{Cap}_{\alpha}(A)}.
$$
Therefore, from Cauchy-Schwarz inequality, we conclude that
\begin{equation}\label{eq:low-bnd-P-J-mu-eps}
\mathbb{P} \left[ J_{\epsilon}(\mu) >0\right] \geq 
\frac{\mathbb{E} \left[ J_{\epsilon}(\mu) \right]^2}{\mathbb{E} \left[ J^2_{\epsilon}(\mu) \right]} \geq  \frac{c}{\textnormal{Cap}_{\alpha}(A)},
\end{equation}
where the positive constant $c$ is independent of $\mu$.
As for the first case, the left-hand side of~\eqref{eq:low-bnd-P-J-mu-eps} is upper bounded by $\mathbb{P}(u([a,b]) \cap A_{\epsilon}
                \neq\varnothing)$, where 
                $A_{\epsilon}$ denotes the closed $\epsilon$-enlargement
                of $A$. Then we let $\epsilon \downarrow 0$ and use
                the continuity of the trajectories of $u$ to assert that \eqref{eq:low-bnd-cap-general} holds true in our Case 2.

\smallskip

\noindent
{\it Case 3:} $\beta=\frac{1}{H}$. This case follows exactly along the same lines as Case 2, except for the fact that we appeal to \cite[Theorem B.2]{DKN} instead of  \cite[Theorem B.1]{DKN}.

\end{proof}

From the definition of capacity and as a consequence of Theorem \ref{thm:cap:LB}, we have the following result
on hitting points for the process $u$.
\begin{corollary} \label{cor11}
Under the hypotheses of Theorem \ref{thm:cap:LB2}, if $\beta <\frac{1}{H}$, the process $u$ hits points
in $\R^n$ with strictly positive probability, that is, 
$$
  \mathbb{P}(\exists \, t > 0 : u_{t}=x)>0 \quad \text{ for all } x \in \R^n.
$$
\end{corollary}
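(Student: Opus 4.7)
The plan is to deduce this corollary directly from Theorem \ref{thm:cap:LB} by specializing the Borel set $A$ to a singleton. Fix $x \in \R^n$ and choose $M>0$ large enough so that $x \in [-M,M]^n$; fix also any $0 < a < b$. The compact set $A = \{x\}$ then satisfies the hypotheses of Theorem \ref{thm:cap:LB}, which gives
\[
\mathbb{P}\bigl( u([a,b]) \cap \{x\} \neq \varnothing \bigr) \;\geq\; c \, \textnormal{Cap}_{\alpha}(\{x\}),
\]
with $\alpha = \beta - 1/H$ and $c = c(a,b,\beta,H,M,n) > 0$.

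Next, I would invoke the elementary observation stated in the text right after the definition \eqref{eq:def-capacity} of the Newtonian capacity: for any point $x \in \R^n$, one has $\textnormal{Cap}_{\alpha}(\{x\}) > 0$ if and only if $\alpha < 0$. This is immediate from \eqref{kernel}--\eqref{eq:def-capacity}, since for $\alpha < 0$ the kernel $\textnormal{K}_{\alpha} \equiv 1$ and hence the unique probability measure $\delta_x \in \mathcal{P}(\{x\})$ has finite energy $\mathcal{E}_{\alpha}(\delta_x) = 1$, giving $\textnormal{Cap}_{\alpha}(\{x\}) = 1$.

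Under the assumption $\beta < 1/H$ we have $\alpha = \beta - 1/H < 0$, so the displayed lower bound yields
\[
\mathbb{P}\bigl( u([a,b]) \cap \{x\} \neq \varnothing \bigr) \;>\; 0.
\]
Since the event $\{u([a,b]) \cap \{x\} \neq \varnothing\}$ is contained in $\{\exists\, t > 0 : u_t = x\}$, this is exactly the desired conclusion. Because $x$ was arbitrary, the statement holds for every $x \in \R^n$.

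There is no real technical obstacle: the entire content is packaged in Theorem \ref{thm:cap:LB} together with the standard characterization of when singletons carry positive Newtonian capacity. The only small care is that $M$ may depend on the fixed point $x$ to ensure $\{x\} \subseteq [-M,M]^n$, but this is harmless because we only need positivity of the probability, not a uniform constant.
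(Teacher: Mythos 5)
Your proof is correct and follows essentially the same route as the paper: apply Theorem \ref{thm:cap:LB} with $A=\{x\}$, note that $\beta<1/H$ forces $\alpha<0$ so $\textnormal{Cap}_\alpha(\{x\})=1$, and conclude. The paper additionally decomposes $(0,\infty)=\cup_m[1/m,m]$ before applying the theorem, but as you correctly observe this is superfluous for the stated conclusion of strict positivity — a single interval $[a,b]$ already suffices.
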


\begin{proof}
Observe that we have $\alpha<0$ whenever $\beta<\frac{1}{H}$. Thus, in this case, $\textnormal{Cap}_{\alpha}(\{x\})=1$ for any $x\in\R^{n}$. On the other hand, we write
$(0, \infty)=\cup_{m \in \mathbb{N}} [\frac{1}{m}, m]$.
Then by Theorem \ref{thm:cap:LB}, for all $m \geq 1$,
there is $c>0$ depending on $m$ such that
$$
  \mathbb{P} \left(\exists \, t \in \left[\frac{1}{m}, m\right]: u_t=x \right) \geq c \,
 \textnormal{Cap}_{\alpha}(\{x\})=c>0.
$$
Since this holds for all $m$, the desired result holds.
\end{proof}

\subsection{Bivariate density bound}
We will now apply apply the general result of Theorem \ref{thm:cap:LB} to the $n$-dimensional process solution to  equation \textnormal{(\ref{eq:sde})}. In order to achieve this goal, the main remaining technical difficulty consists in proving the upper bound for the bivariate density stated at condition {\bf (A2)}. In this case, our strategy hinges on conditional integration by parts in the Malliavin calculus sense, which turns out to be much easier to express in terms of the underlying Wiener process $W$ induced by the Volterra representation \eqref{eq:volterra-representation}. This idea is also present in \cite{BKT}, and it forces us to introduce some additional notation.

\smallskip

We shall manipulate Malliavin derivatives with respect to both $B$ and $W$. In order to distinguish them, the Malliavin derivatives with respect to $W$ will be denoted by $D$ and the Sobolev spaces by $D^{k,p}$.
The relationship between the two kinds of derivatives are recalled in the following:

\begin{proposition} \label{prop:relation-D-DW}
Let ${D}^{1,2}$ be the Malliavin-Sobolev space corresponding to
the Wiener process $W$. Then $\D^{1,2}=(K^{*})^{-1}{D}^{1,2}$ and for
any $F\in {D}^{1,2}$ we have ${\mathrm D} F = K^{*} \bd F$
whenever both members of the relation are well defined. 
\end{proposition}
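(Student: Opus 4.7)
The plan is to verify the identity first on the dense class of cylindrical functionals, then to close the argument by using the isometric property of $K^{*}:\mathcal{H}\to L^{2}([0,1])$ to show that the two Sobolev norms $\|\cdot\|_{1,2}$ (with respect to $B$) and $\|\cdot\|_{D^{1,2}}$ (with respect to $W$) coincide on this common dense class. This essentially reduces the proposition to a chain rule computation plus a density argument.

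First, I would start from a cylindrical $F=f(B_{t_{1}},\ldots,B_{t_{m}})$ with $f\in C_{b}^{\infty}(\mathbb{R}^{m})$. Writing each $B_{t_{i}}=\int_{0}^{t_{i}}K(t_{i},u)\,dW_{u}$ by the Volterra representation \eqref{eq:volterra-representation}, the random variable $F$ is also cylindrical with respect to $W$, and a direct application of the definitions gives
\begin{equation*}
\mathbf{D} F \;=\; \sum_{i=1}^{m}\partial_{i}f(B_{t_{1}},\ldots,B_{t_{m}})\,\mathbf{1}_{[0,t_{i}]}\ \in\ \mathcal{H},
\qquad
\mathrm{D}_{s}F \;=\; \sum_{i=1}^{m}\partial_{i}f(B_{t_{1}},\ldots,B_{t_{m}})\,K(t_{i},s).
\end{equation*}
Since by construction $K^{*}\mathbf{1}_{[0,t_{i}]}=K(t_{i},\cdot)$, the two displays coincide, i.e.\ $\mathrm{D} F=K^{*}\mathbf{D} F$ for every cylindrical $F$.

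Next, because $K^{*}$ is an isometry between $\mathcal{H}$ and its image in $L^{2}([0,1])$, we obtain the norm identity
\begin{equation*}
\|\mathbf{D} F\|_{\mathcal{H}}\;=\;\|K^{*}\mathbf{D} F\|_{L^{2}}\;=\;\|\mathrm{D} F\|_{L^{2}},
\end{equation*}
which, combined with $\|F\|_{L^{2}(\Omega)}$ being the same for both calculi, shows that the cylindrical $\|\cdot\|_{1,2}$ and $\|\cdot\|_{D^{1,2}}$ Sobolev norms coincide exactly. The standard Malliavin closability argument then gives that the closures $\mathbb{D}^{1,2}$ and $D^{1,2}$ of the cylindrical class with respect to these (equal) norms coincide as sets of random variables, with the derivatives related via the extension-by-continuity of the identity $\mathrm{D} F=K^{*}\mathbf{D} F$; this yields $\mathbb{D}^{1,2}=(K^{*})^{-1}D^{1,2}$ and the pointwise formula on this common space.

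The only point that requires mild care — and the place where I expect the main (modest) obstacle to lie — is the rigorous comparison of the two classes of cylindrical random variables and the verification that $K^{*}$ takes the $B$-cylindrical derivative into an $L^{2}$ function that genuinely agrees with the $W$-cylindrical derivative. This is essentially bookkeeping around the Volterra kernel (in particular for $H<1/2$, where $K^{*}$ is given by the fractional differentiation formula \eqref{eq:def-K-star-less-half} rather than the smoother \eqref{eq:def-K-star}), and is handled by applying the identity first on indicator-of-interval test functions, and then extending by linearity and the density of step functions in $\mathcal{H}$.
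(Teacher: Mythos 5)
The paper states this proposition without proof; it is a standard ``transfer principle'' imported from the literature (cf.\ \cite{NS} and \cite[Chapter 5]{Nu06}), so there is no internal argument against which to compare yours. Your proposal is correct and is essentially the canonical proof: one checks the identity $\mathrm{D}F=K^{*}\mathbf{D}F$ on the cylindrical core using $K^{*}\mathbf{1}_{[0,t_i]}=K(t_i,\cdot)$, transfers the Sobolev norms via the isometry, and closes. Two facts that you use implicitly deserve to be made explicit. First, $K^{*}$ is in fact a \emph{unitary} from $\mathcal{H}$ onto $L^{2}([0,1])$, not merely an isometry onto its range: the range is closed because $\mathcal{H}$ is complete and $K^{*}$ is isometric, and it is dense because the closed linear span of $\{K(t,\cdot):t\in[0,1]\}$ exhausts $L^{2}([0,1])$; this surjectivity is precisely what upgrades the norm identity on the core into an \emph{equality} of Sobolev spaces rather than a one-sided inclusion. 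Second, for the closure argument to recover all of $D^{1,2}$ (and not merely the closure of the $B$-cylindrical class in the $D^{1,2}$-norm), you need the $B$-cylindrical functionals to be dense in $D^{1,2}$; this relies on the equality of $\sigma$-fields $\sigma(B)=\sigma(W)$ recorded at the end of Section~\ref{sec:wiener-space}. With those two remarks supplied, your argument is complete and rigorous.
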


In particular,  we can compute the Malliavin derivative  of $(X_t^x)_{t \ge 0}$ with respect to $W$ as follows:

\begin{proposition}\label{prop:deriv-sdeW} 
Let $X^x$ be the solution to equation (\ref{eq:sde}) and suppose the $V_i$'s satisfy Hypothesis \ref{hyp:regularity-V}. Then
for every $i=1,\ldots,n$, $t>0$, and $x \in \mathbb{R}^n$, we have $X_t^{x,i} \in
D^{\infty}$ and
\begin{equation*}
D^j_s X_t^{x}= \mathbf{J}_{t} Q^j_{st}, \quad j=1,\ldots,d, \quad 
0\leq s \leq t,
\end{equation*}
where $D^j_s X^{x,i}_t $ is the $j$-th component of
$D_s X^{x,i}_t$, $\mathbf{J}_{t}=\partial_{x} X^x_t$ is defined at Proposition \ref{prop:deriv-sde},   and
 \begin{align}\label{malliavin-W2}
 Q^j_{st}=
 \begin{cases}
 \int_s^t \partial_u K(u,s) \mathbf{J}^{-1}_{u} V_j(X_u) du, \quad H >1/2 \\
K(t,s)  \mathbf{J}^{-1}_{s} V_j(X_s) + \int_s^t  \lp  \mathbf{J}^{-1}_{r} V_j(X_r)- \mathbf{J}^{-1}_{s} V_j(X_s) \rp  \, \partial_r K(r,s)  dr, \quad H \le 1/2.
\end{cases}
\end{align}
\end{proposition}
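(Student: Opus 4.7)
The plan is to reduce the statement to Proposition \ref{prop:deriv-sde} via the change-of-basis formula for Malliavin derivatives recorded in Proposition \ref{prop:relation-D-DW}. From Proposition \ref{prop:deriv-sde} we already know that $X_t^{x,i}\in\mathbb{D}^\infty(\ch)$ with
\[
\mathbf{D}^j_r X_t^x = \mathbf{J}_t \,\mathbf{J}_r^{-1}V_j(X_r^x)\,\mathbf{1}_{[0,t]}(r).
\]
Since Proposition \ref{prop:relation-D-DW} asserts $\mathbb{D}^{1,2}=(K^{\ast})^{-1}D^{1,2}$ and $D F = K^{\ast}\mathbf{D} F$, an iteration of this identity for higher order derivatives immediately promotes the membership $X_t^{x,i}\in\mathbb{D}^\infty(\ch)$ to $X_t^{x,i}\in D^\infty$, and reduces the identification of $D_s X_t^x$ to an explicit computation of $[K^{\ast}\mathbf{D} X_t^x]_s$ using the representations \eqref{eq:def-K-star}--\eqref{eq:def-K-star-less-half}.

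For $H>1/2$ I would substitute $\varphi_r = \mathbf{J}_t\mathbf{J}_r^{-1}V_j(X_r^x)\mathbf{1}_{[0,t]}(r)$ into $[K^{\ast}\varphi]_s = \int_s^1 \varphi_r\,\partial_r K(r,s)\,dr$ and factor out $\mathbf{J}_t$; the indicator truncates the integral at $t$, yielding directly $\mathbf{J}_t\int_s^t \partial_r K(r,s)\,\mathbf{J}_r^{-1}V_j(X_r^x)\,dr = \mathbf{J}_t Q^j_{st}$. For $H\le 1/2$ I would instead apply the compensated form $[K^{\ast}\varphi]_s = K(1,s)\varphi_s + \int_s^1(\varphi_r-\varphi_s)\,\partial_r K(r,s)\,dr$ and split $\int_s^1 = \int_s^t + \int_t^1$. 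On $[t,1]$ we have $\varphi_r=0$, so its contribution is $-\varphi_s\int_t^1\partial_r K(r,s)\,dr = -\varphi_s(K(1,s)-K(t,s))$; combined with the boundary term $K(1,s)\varphi_s$ this telescopes to $K(t,s)\,\mathbf{J}_t\mathbf{J}_s^{-1}V_j(X_s^x)$. The remaining piece $\int_s^t(\varphi_r-\varphi_s)\,\partial_r K(r,s)\,dr$ produces the integral term in the definition of $Q^j_{st}$, and pulling out $\mathbf{J}_t$ yields the claimed expression.

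The main delicacy lies in justifying the pointwise manipulations of $K^{\ast}$ when $H<1/2$: the derivative $\partial_r K(r,s)$ is singular as $r\downarrow s$ while $r\mapsto \mathbf{J}_r^{-1}V_j(X_r^x)$ is only $(H-\varepsilon)$-H\"older, which is precisely why the compensated form \eqref{eq:def-K-star-less-half} is indispensable (the difference $\varphi_r-\varphi_s$ absorbs the singularity). The H\"older regularity of $\mathbf{J}$, $\mathbf{J}^{-1}$ and $X^x$ needed to make the compensated integral absolutely convergent is provided by Proposition \ref{prop:moments-jacobian} together with Hypothesis \ref{hyp:regularity-V}. Strictly speaking one should first verify the identity on smooth cylindrical approximations of $B$ (for which $K^{\ast}\mathbf{D}$ coincides with $D$ by construction) and then pass to the limit in $D^{1,p}$, which is justified by the $L^p$-continuity of the rough path solution map and its Jacobian in the sense of Propositions \ref{prop:moments-sdes-rough}--\ref{prop:moments-jacobian}.
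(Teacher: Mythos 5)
Your proposal is correct and supplies exactly the argument that the paper leaves implicit (the paper states Proposition \ref{prop:deriv-sdeW} without proof). Reducing to Proposition \ref{prop:deriv-sde} via $D F = K^{*}\mathbf{D}F$ from Proposition \ref{prop:relation-D-DW}, and then plugging $\varphi_r=\mathbf{J}_t\mathbf{J}_r^{-1}V_j(X_r^x)\mathbf{1}_{[0,t]}(r)$ into \eqref{eq:def-K-star} (for $H>1/2$) and \eqref{eq:def-K-star-less-half} (for $H\le 1/2$) is the natural and presumably intended route. Your telescoping of $K(1,s)\varphi_s$ against $-\varphi_s\int_t^1\partial_r K(r,s)\,dr$ to produce $K(t,s)\varphi_s$ is the correct way to truncate the compensated representation at $t$, and the remark that the H\"older exponent of $r\mapsto\mathbf{J}_r^{-1}V_j(X_r^x)$ (any $\gamma<H$) exceeds the threshold $1/2-H$ precisely when $H>1/4$ is the right technical justification for absolute convergence; the suggestion to verify on cylindrical approximations and pass to the limit in $D^{1,p}$ is a standard and adequate way to close the argument rigorously.
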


Recall that we have chosen to express our conditional integration by parts formula in terms of the underlying Wiener process $W$, because projections on subspaces are easier to describe in a $L^{2}$ type setting. 
We now state this conditional integration by parts formula: For a random variable $F$ and $t\in [0,1]$, let $\|F\|_{m,p,t}$ and
  $\Gamma_{F,t}$ be the quantities defined (for $m\ge 0$, $p>0$) by:
\begin{equation}\label{eq:def-conditional-sobolev}
\|F\|_{m,p,t}=\left( \mathbb{E}_t\left[
F^{p}\right]
+\sum_{j=1}^m \mathbb{E}_t\left[ \left\| {D}^j F\right\|
_{(L^{2}_{t})^{\otimes j}}^{p}\right] \right) ^{\frac{1}{p}},
\text{ and }
{\Gamma}_{F,t}=
\lp\langle D F^i, D F^j\rangle_{L^{2}_{t}}\rp_{1\leq i,\,j\leq n},
\end{equation}
where we have set $L^{2}_{t}\equiv L^{2}([t,1])$ and $\mathbb{E}_t=\mathbb{E}( \cdot | \mathcal{F}_t)$.
With this notation in hand, the following formula is borrowed from
\cite[Proposition 2.1.4]{Nu06}:

\begin{proposition} \label{prop:int-parts-cond-W}
Fix $k\geq 1$. Let $F,\,Z_s,\,G\in({D}^{\infty})^n$ be three
random vectors where $Z_s\in\cf_s$-measurable and 
$(\det_{{\Gamma}_{F+Z_s}})^{-1}$ has finite moments of
all orders. Let $g\in\cac_p^\infty(\mr^d)$. Then, for any
multi-index
$\alpha=(\alpha_1,\,\ldots,\,\,\alpha_k)\in\{1,\,\ldots,\,n\}^k$,
there exists a r.v. ${H}^s_\alpha(F,G)\in\cap_{p\geq
1}\cap_{m\geq 0} {D}^{m,p}$ such that
\begin{equation}
\label{eq:int-parts-cond-W}
\EE_s\lc(\partial_\alpha g)(F+Z_s) \, G\rc
=
\EE_s\lc g(F+Z_s) \, {H}_\alpha^s(F,G)|\rc,
\end{equation}
where ${H}_\alpha^s(F,G)$ is
recursively defined by
\begin{equation*}
{H}_{(i)}^s(F,G)=\sum_{j=1}^n {\delta}_s\lp
G\lp{{\Gamma}}_{F,s}^{-1}\rp_{ij}D F^j\rp,
\quad
{H}_{\alpha}^s(F,G)={H}^s_{(\alpha_k)}(F,{H}^s_{(\alpha_1,\,\ldots,\,\alpha_{k-1})}(F,G)).
\end{equation*}
Here ${\delta}_s$ denotes the Skorohod integral with respect
to the Wiener process $W$ on the interval $[s,1]$.
Furthermore, the following norm estimates hold true:
\begin{equation} \label{eq:bnd-H-alpha-condit}
\Vert  H_{\alpha}^{s}(F,G) \Vert_{p,s} \leq 
c_{p,q} \Vert \Gamma_{F,s}^{-1}  \, D F\Vert^k_{k, 2^{k-1}r,s}\Vert G\Vert^k_{k, q,s},
\end{equation}
 where $\frac1p=\frac1q+\frac1r$.
\end{proposition}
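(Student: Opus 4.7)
The plan is to adapt the standard (unconditional) integration by parts formula from \cite[Proposition 2.1.4]{Nu06} to the conditional setting, exploiting the fact that once we fix $s\in\ou$, the Wiener process $(W_{s+u}-W_s)_{0\le u\le 1-s}$ is a Brownian motion independent of $\cf_s$. Under this conditioning, the Sobolev spaces associated with $L^{2}_{s}=L^{2}([s,1])$ and the Skorohod integral $\delta_s$ constitute the Malliavin calculus of a genuine Wiener space, so the only new ingredient is checking that the shift $Z_s$ is invisible from the point of view of derivatives on $[s,1]$.

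First, for the base case $k=1$, I would use that $Z_s$ is $\cf_s$-measurable, hence $D_r Z_s=0$ for $r\in[s,1]$. The chain rule then reads, for $r\in[s,1]$,
\begin{equation*}
D_r g(F+Z_s)=\sum_{j=1}^n (\partial_j g)(F+Z_s)\,D_r F^j.
\end{equation*}
Taking the $L^{2}_{s}$ inner product with $(\Gamma_{F,s}^{-1})_{ij}DF^j$ and summing in $j$ gives, in view of the definition \eqref{eq:def-conditional-sobolev} of $\Gamma_{F,s}$,
\begin{equation*}
(\partial_i g)(F+Z_s)=\sum_{j=1}^n \langle Dg(F+Z_s),\,(\Gamma_{F,s}^{-1})_{ij}DF^j\rangle_{L^{2}_{s}}.
\end{equation*}
Multiplying by $G$, applying $\EE_s$ and invoking the conditional duality between $D$ and $\delta_s$ on $[s,1]$ yields the identity \eqref{eq:int-parts-cond-W} with $H_{(i)}^s(F,G)=\sum_j \delta_s(G(\Gamma_{F,s}^{-1})_{ij}DF^j)$. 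A subtlety here is that $(\Gamma_{F,s}^{-1})_{ij}DF^j$ must belong to the domain of $\delta_s$; this follows from the hypothesis that $(\det\Gamma_{F+Z_s,s})^{-1}=(\det\Gamma_{F,s})^{-1}$ has all finite moments (the equality uses again $\cf_s$-measurability of $Z_s$), together with the smoothness of $F,G$.

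Second, the case $k\geq 2$ is obtained by induction: apply the $k=1$ identity with $g$ replaced by $\partial_{\alpha_1\ldots\alpha_{k-1}}g$ and $G$ replaced by $H^{s}_{(\alpha_1,\ldots,\alpha_{k-1})}(F,G)$, which by the inductive hypothesis lies in $\cap_{p,m}D^{m,p}$. This directly produces the recursive formula in the statement.

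Third, for the norm bound \eqref{eq:bnd-H-alpha-condit}, I would iterate Meyer's inequality in its conditional form on $[s,1]$, which controls $\|\delta_s(u)\|_{p,s}$ by $\|u\|_{1,p,s}$. Each application of $\delta_s$ produces one factor of $\Gamma_{F,s}^{-1}DF$ in the $(\cdot)_{1,\,2r,\,s}$-norm and increases the integrability loss by a factor of two, leading after $k$ iterations to the exponent $2^{k-1}r$. A final H\"older inequality with $p^{-1}=q^{-1}+r^{-1}$ separates the $G$-dependent factor and yields exactly \eqref{eq:bnd-H-alpha-condit}.

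The main obstacle is purely notational: making rigorous the conditional version of duality and Meyer's inequality on $L^{2}_{s}$. However, once one uses the regular conditional distribution of $(W_{s+u}-W_s)_{0\le u\le 1-s}$ given $\cf_s$ and identifies it with an independent Brownian motion, all steps reduce to the unconditional statement of \cite[Proposition 2.1.4]{Nu06} applied on this auxiliary Wiener space, with parameters in $\cf_s$ treated as constants. This is precisely the viewpoint adopted in \cite{BKT}, to which we may refer for the technical verifications.
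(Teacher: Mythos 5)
Your argument is correct and is exactly the natural conditional adaptation of \cite[Proposition 2.1.4]{Nu06}; the paper itself gives no proof of this proposition and simply borrows the statement from that reference. The key observation you isolate — that $D_r Z_s = 0$ for $r\in[s,1]$ because $Z_s$ is $\cf_s$-measurable, so $\Gamma_{F+Z_s,s}=\Gamma_{F,s}$ and the chain rule on $[s,1]$ sees only $F$ — is precisely what reduces the conditional formula to the unconditional one on the auxiliary Wiener space generated by $(W_{s+u}-W_s)_{u\ge 0}$, after which the duality, the induction in $k$, and the Meyer-type estimate for \eqref{eq:bnd-H-alpha-condit} go through verbatim.
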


\smallskip

In order to get our bivariate density bound, we shall also need to work on weighted norms on the interval $[s,t]$. For instance, when $H>1/2$, we have the following uniform scale invariant inequalities:

\begin{lemma}\label{kjh} Assume $H>1/2$.
Let $0 <\varepsilon <1$ and $\gamma> H-\frac{1}{2}$. There exist two constants $C_1,C_2 >0$ such that for any continuous $f:[0,1]\to \mathbb{R}^{n}$,  and $\varepsilon \le  s<t \le 1$, we have:
\begin{equation}\label{eq:ineq-norm-H-in-s-t}
C_1 (t-s)^{2H} \frac{\min_{[0,1]} |f|^4}{\| f\|_{\infty}^2+ \| f \|_{\gamma}^2} 
\le \int_s^t \left|  \int_u^t \partial_v K(v,u) f(v)dv\right|^2 du 
\end{equation}
\end{lemma}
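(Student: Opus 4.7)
My plan is to reduce the inequality to a fractional-integral variant of the interpolation bound of Proposition~\ref{interpolation} via rescaling to $[0,1]$. Using the explicit form $\partial_v K(v,u) = c_H u^{1/2-H}(v-u)^{H-3/2}v^{H-1/2}$ together with the substitutions $u = s+(t-s)\tilde u$ and $v = s+(t-s)\tilde v$, and setting $g(\tilde v):=f(s+(t-s)\tilde v)$, the left hand side becomes $c_H^2(t-s)^{2H}$ times an integral on $[0,1]$ weighted by $[s+(t-s)\tilde u]^{1-2H}$ (outer) and $[s+(t-s)\tilde v]^{H-1/2}$ (inner). The assumption $s\ge\varepsilon$ and $t \le 1$ keeps both weights uniformly bounded above and below by constants depending only on $\varepsilon$ and $H$; after absorbing them into the overall constant, it suffices to prove
\[
\int_0^1\left[\int_{\tilde u}^1(\tilde v-\tilde u)^{H-3/2}g(\tilde v)\,d\tilde v\right]^2 d\tilde u\;\ge\; c\,\frac{\min_{[0,1]}|g|^4}{\|g\|_\infty^2+\|g\|_\gamma^2}.
\]

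To establish this, denote by $T$ the operator inside the squared brackets and write the decomposition $Tg(\tilde u) = g(\tilde u)\phi(\tilde u) + R(\tilde u)$, where $\phi(\tilde u):=(1-\tilde u)^{H-1/2}/(H-1/2)$ and $R(\tilde u):=\int_{\tilde u}^1(\tilde v-\tilde u)^{H-3/2}[g(\tilde v)-g(\tilde u)]\,d\tilde v$. The condition $\gamma > H-1/2$ guarantees $|R(\tilde u)|\le C\|g\|_\gamma(1-\tilde u)^{H-1/2+\gamma}$. The elementary inequality $(a+b)^2 \ge a^2/2 - 2b^2$ then yields the global bound $\|Tg\|_{L^2}^2\ge C_1\min|g|^2-C_2\|g\|_\gamma^2$, which already suffices in the regime $\|g\|_\gamma\lesssim\min|g|$ because $\min|g|^2\ge\min|g|^4/(\|g\|_\infty^2+\|g\|_\gamma^2)$. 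In the complementary regime I would localize the argument to the subinterval $[1-\delta,1]$ with $\delta\sim(\min|g|/\|g\|_\gamma)^{1/\gamma}$, where the leading term $g\phi$ still dominates $R$ and the same quadratic inequality applied on this subinterval produces an analogous local bound matching the required scaling.

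Passing back from $g$ to $f$ is then routine, using $\min_{[0,1]}|g| = \min_{[s,t]}|f| \ge \min_{[0,1]}|f|$, $\|g\|_\infty \le \|f\|_\infty$, and $\|g\|_\gamma \le (t-s)^\gamma\|f\|_\gamma \le \|f\|_\gamma$. The main obstacle will be the bulk fractional-integral estimate for $T$: matching the exponents through the two-regime case analysis so that the interpolation scaling $\min|g|^4/(\|g\|_\infty^2+\|g\|_\gamma^2)$ emerges with a uniform constant independent of $g$, $s$ and $t$.
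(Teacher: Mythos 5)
Your Step~1 (reducing $\partial_v K$ to the pure fractional kernel) takes a genuinely different and more elementary route than the paper. You use the closed form $\partial_v K(v,u)=c_H u^{1/2-H}(v-u)^{H-3/2}v^{H-1/2}$ (valid for $H>1/2$) and the fact that both algebraic weights are bounded above and below on $[\varepsilon,1]$, whereas the paper first shifts the interval and then invokes the equivalence in distribution of the Volterra processes $\int_0^\cdot(\cdot-s)^{H-1/2}dW_s$ and $\int_0^\cdot K(\cdot+1,s+1)dW_s$ via \cite{BN1} to compare reproducing-kernel norms. Your route is fine, with one caveat that you should make explicit: the inner weight $[s+(t-s)\tilde v]^{H-1/2}$ sits inside the $\tilde v$-integral against a not-necessarily-signed $g$, so it cannot be pulled out as a scalar; you must absorb it into the test function, setting $\tilde g=[s+(t-s)\cdot]^{H-1/2}g$ and checking that $\min|\tilde g|\gtrsim\min|g|$, $\|\tilde g\|_\infty\lesssim\|g\|_\infty$, $\|\tilde g\|_\gamma\lesssim\|g\|_\gamma+\|g\|_\infty$ with constants depending only on $\varepsilon,H,\gamma$, which keeps the interpolation quotient comparable.

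Your Step~2, however, has a real gap in the range $\gamma\in(H-\tfrac12,H)$, which is exactly the range where the lemma is nontrivial. The decomposition $Tg=g\phi+R$ and the global quadratic bound correctly handle the regime $\|g\|_\gamma\lesssim\min|g|$. In the complementary regime your localization to $[1-\delta,1]$ with the (optimal) choice $\delta\sim(\min|g|/\|g\|_\gamma)^{1/\gamma}$ gives $\int_{1-\delta}^1|g\phi|^2\gtrsim\min|g|^2\delta^{2H}$ and $\int_{1-\delta}^1|R|^2\lesssim\|g\|_\gamma^2\delta^{2H+2\gamma}$, hence a lower bound of order $\min|g|^2\theta^{2H/\gamma}$ with $\theta=\min|g|/\|g\|_\gamma\le 1$. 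But the target there is controlled by $\min|g|^4/\|g\|_\gamma^2=\min|g|^2\theta^2$, so you need $\theta^{2H/\gamma}\gtrsim\theta^2$ uniformly as $\theta\to 0$, which forces $2H/\gamma\le 2$, i.e. $\gamma\ge H$. For $\gamma<H$ your local bound vanishes faster than the target, so the claim that it ``produces an analogous local bound matching the required scaling'' does not go through. This is precisely why the paper does not reprove the interpolation estimate from scratch but cites \cite[Lemma~4.4]{BH} for the inequality on $[0,1]$ (and then rescales); to complete your argument you would need to supply the argument that closes the range $(H-\tfrac12,H)$, which the naive $[1-\delta,1]$-localization does not.
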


\begin{proof}

For notational sake, we prove our lemma for real valued functions only, leaving the obvious extension to $f:[0,1]\to \mathbb{R}^{n}$ to the patient reader. We now proceed in several steps.

\smallskip

\noindent
\textit{Step 1:} We first prove that
\begin{align}\label{rich}
\alpha \int_s^t \left(  \int_u^t (v-u)^{H-3/2} f(v)dv\right)^2 du & \le \int_s^t \left(  \int_u^t \partial_v K(v,u) f(v)dv\right)^2 du   
\end{align}
Using the change of variable $u=s+sx$ and $v=s+sy$, and the scaling property of the kernel $K$, we just need to prove that for $t \le T$,
\begin{align*}
\alpha \int_0^t \left(  \int_u^t (v-u)^{H-3/2} f(v)dv\right)^2 du & \le \int_0^t \left(  \int_u^t \partial_v K(v+1,u+1) f(v)dv\right)^2 du  
\end{align*}
Up to a constant, the norm $ \int_0^t \left(  \int_u^t (v-u)^{H-3/2} f(v)dv\right)^2 du$ is the norm of the reproducing Hilbert space of the Gaussian process
\begin{equation}\label{eq:def-process-Y}
Y_t=d_H \int_0^t (t-s)^{H-1/2} dW_s,
\end{equation}
where $d_H (H-1/2)=c_H$ and the norm $\int_0^t \left(  \int_u^t \partial_v K(v+1,u+1) f(v)dv\right)^2 du$ is the norm of the reproducing Hilbert space of the Gaussian process
\[
Z_t=\int_0^t K(t+1,s+1) dW_s.
\]
So, to prove \eqref{rich}, according to Lemma 2 in \cite{BN1}, we just need to prove that $(Y_t)_{0 \le t \le T}$ and $(Z_t)_{0 \le t \le T}$ are equivalent in distribution. From Theorem 7 in \cite{BN1}, we have to prove that there exists a square integrable kernel $L$ such that
\[
K(t+1,s+1)=d_H (t-s)^{H-1/2} +d_H \int_s^t(t-r)^{H-1/2} L(r,s) dr.
\]
Since $H>1/2$, we can differentiate both members of the above equation with respect to $t$ and we obtain
\[
 (s+1)^{1/2 -H} (t-s)^{H-3/2}(t+1)^{H-1/2}- (t-s)^{H-3/2}=\int_s^t(t-r)^{H-3/2} L(r,s) dr.
\]
Hence, it suffices to take
\[
L(t,s)=\frac{1}{\Gamma\left( H-\frac{1}{2} \right)} D^{H-1/2}_{s+} \left[ (\cdot -s)^{H-3/2}\left( \left( \frac{\cdot+1 }{s+1 }\right)^{H-1/2}-1 \right) \right](t),
\]
which is easily seen to be square integrable.

\smallskip

\noindent
\textit{Step 2:} Thanks to the previous step, our result boils down to show \eqref{eq:ineq-norm-H-in-s-t} when $\partial_v K(v,u)$ is replaced by $(v-u)^{H-3/2}$. Towards this aim, notice first that by using the same arguments as in \cite[Lemma 4.4]{BH}, we obtain the interpolation inequality
\[
\int_0^1 \left(  \int_u^1 (v-u)^{H-3/2} f(v)dv\right)^2 du\ge C \frac{\min_{[0,1]} f^4}{\|f \|^2_{\infty}+\| f \|_{\gamma}^2} ,
\]
which is easy to rescale:
\begin{align*}
\int_s^t \left(  \int_u^t (v-u)^{H-3/2} f(v)dv\right)^2 du & =(t-s)^{2H}\int_0^1 \left(  \int_u^1 (v-u)^{H-3/2} f(s+(t-s)v)dv\right)^2 du \\
 & \ge C (t-s)^{2H} \frac{\min_{[0,1]} f_{st} ^4}{\|f_{st} \|^2_{\infty}+\| f_{st} \|_{\gamma}^2} \\
 &\ge C (t-s)^{2H} \frac{\min_{[0,1]} f^4}{\|f \|^2_{\infty}+\| f \|_{\gamma}^2},
\end{align*}
where we have set $f_{st}(u)=f(s+(t-s)u)$.
\end{proof}

In the case $H \le 1/2$, the scale invariant inequalities we have are the following:

\begin{lemma}\label{lem:interpol-st-less-1/2}
Assume $H \le 1/2$.
Let $0 <\varepsilon <1$ . There exists constants $c_1,c_2 >0$ such that for any $ f \in C^\gamma([0,1];\R^{n})$, with $ \gamma >1/2 -H$  and $\varepsilon \le  s<t \le 1$, we have:
\begin{align*}
c_1 (t-s)^{2H} \min_{[0,1]} |f|^2  &  \le 
\int_s^t \left|K(t,u)  f(u) + \int_u^t \lp  f( r ) -f(u) \rp  \, \partial_r K(r,u)  dr\right|^2 du  
\end{align*}
\end{lemma}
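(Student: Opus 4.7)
I would adapt the approach of Lemma \ref{kjh} to the regime $H\le 1/2$, in three steps. The central observation is that the integrand admits a clean interpretation as a Marchaud-type fractional derivative, which can then be analyzed via rescaling and a Sobolev embedding.

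First, starting from the fractional representation \eqref{eq:def-K-star-less-half} of the isometry $K^*$ and evaluating $[K^*(f\mathbf{1}_{[s,t]})](u)$ explicitly (splitting the outer integral at $r=t$ and absorbing the contribution from $[t,1]$ into the $(t-u)^{-(1/2-H)}$ term), I would show that for $u\in[s,t]$,
\[
K(t,u)f(u)+\int_u^t (f(r)-f(u))\partial_r K(r,u)\,dr \;=\; d_H\,u^{1/2-H}\,[D^{1/2-H}_{t-}\psi](u),
\]
where $\psi(r):=r^{H-1/2}f(r)$ and $D^{1/2-H}_{t-}$ is the right-sided Marchaud derivative of order $1/2-H$ on $[u,t]$. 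Since $s\ge\varepsilon$ and $H\le 1/2$, we have $u^{1-2H}\ge\varepsilon^{1-2H}$ on $[s,t]$, so it suffices to lower bound $\int_s^t|D^{1/2-H}_{t-}\psi(u)|^2\,du$.

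Second, the change of variables $u=s+(t-s)\tilde u$, $r=s+(t-s)\tilde r$, which is well adapted to the homogeneity of the Marchaud kernel, yields the scaling identity
\[
\int_s^t |D^{1/2-H}_{t-}\psi(u)|^2\,du \;=\; (t-s)^{2H}\int_0^1 |D^{1/2-H}_{1-}\hat\psi(\tilde u)|^2\,d\tilde u, \qquad \hat\psi(\tilde u):=\psi(s+(t-s)\tilde u).
\]
Setting $\alpha:=1/2-H\in(0,1/2)$, the Hardy-Littlewood-Sobolev inequality applied via the Riesz inversion $\hat\psi=I^\alpha_{1-}(D^\alpha_{1-}\hat\psi)$ (valid since our hypothesis $\gamma>1/2-H$ gives $\hat\psi\in C^\gamma$ with $\gamma>\alpha$) gives $\|\hat\psi\|_{L^{1/H}([0,1])}\le C\|D^\alpha_{1-}\hat\psi\|_{L^2([0,1])}$. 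Since $1/H\ge 2$, H\"older's inequality on the unit interval yields $\|\hat\psi\|_{L^2}\le\|\hat\psi\|_{L^{1/H}}$, so
\[
\int_0^1 |D^\alpha_{1-}\hat\psi|^2\,d\tilde u \;\ge\; c\min_{[0,1]}|\hat\psi|^2 \;\ge\; c\min_{[0,1]}|f|^2,
\]
where the last step uses $|\hat\psi(\tilde u)|=u^{H-1/2}|f(u)|\ge|f(u)|$ on $[\varepsilon,1]$ (since $H-1/2\le 0$ and $u\le 1$). Chaining the three estimates yields the asserted lower bound.

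The main technical obstacle lies in the fractional-calculus identification of the first step, which demands careful bookkeeping of the power factors $u^{H-1/2}$ produced by the isometry $K^*$ and of the truncation of the Marchaud kernel at $r=t$ rather than $r=1$. A potentially more transparent alternative---closer in spirit to the Gaussian-process equivalence technique of Lemma \ref{kjh}---would be to first establish equivalence in distribution between the conditional Volterra process $\tilde B^{(s)}_u=\int_s^u K(u,w)\,dW_w$ on $[s,1]$ and a suitable Riemann-Liouville process (via a Hitsuda-type kernel representation as in Lemma 2 of \cite{BN1}), and then deduce the lower bound by the same scaling and Sobolev embedding.
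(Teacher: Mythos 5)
Your plan is correct and, in places, more explicit than the paper's own sketch, but it does diverge from the paper at the crucial reduction step. The paper's Step 1 does not use the fractional-derivative representation \eqref{eq:def-K-star-less-half}; instead it invokes (as in Lemma \ref{kjh}) the equivalence in distribution of the two Gaussian processes $\int_0^\cdot K(\cdot+1,v+1)\,dW_v$ and $d_H\int_0^\cdot(\cdot-v)^{H-1/2}\,dW_v$ (the Hitsuda kernel argument from \cite{BN1}) to replace $K$ by the power kernel $\hat L(t,s)=(t-s)^{H-1/2}$, and only then recognizes the resulting quantity as a Marchaud derivative on $[s,t]$. You instead identify the integrand directly: checking that for $u\in(s,t)$ the Lemma's integrand is exactly $[K^*(f\mathbf{1}_{[s,t]})](u)$, and then that splitting the Marchaud integral in \eqref{eq:def-K-star-less-half} at $r=t$ absorbs the $[t,1]$-contribution into the boundary term to give $d_H u^{1/2-H}[D^{1/2-H}_{t-}(v^{H-1/2}f(v))](u)$. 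This is an exact identity and bypasses the Gaussian-equivalence machinery; since $u\in[\varepsilon,1]$ and $1-2H\ge 0$ the weight $u^{1-2H}$ is bounded below, so the reduction is at least as clean as the paper's (and you correctly flag the paper's route as the alternative). For the remaining steps you agree with the paper: the Marchaud scaling $\int_s^t|D^\alpha_{t-}\psi|^2\,du=(t-s)^{2H}\int_0^1|D^\alpha_{1-}\hat\psi|^2\,d\tilde u$ with $\alpha=1/2-H$ is right, and you make explicit what the paper dismisses as ``some fractional calculus arguments,'' namely the chain $\|D^\alpha_{1-}\hat\psi\|_{L^2}^2\gtrsim\|\hat\psi\|_{L^{1/H}}^2\ge\|\hat\psi\|_{L^2}^2\ge\min_{[0,1]}|\hat\psi|^2\ge\min_{[0,1]}|f|^2$ via HLS and the Riesz inversion $\hat\psi=I^\alpha_{1-}D^\alpha_{1-}\hat\psi$ (legitimate here because $\hat\psi\in C^\gamma$ with $\gamma>\alpha=1/2-H$ and $\alpha<1/2$ keeps the Marchaud boundary term in $L^2$). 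The only thing I would tighten is to state the Riesz inversion carefully for functions not vanishing at the right endpoint (the Marchaud form handles this, but it deserves a sentence), since that is precisely the gap the paper leaves implicit.
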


\begin{proof}
Some elements of the proof are pretty similar to the proof of Lemma \ref{kjh}, so we only sketch the main arguments. We also focus here on the case of real valued functions for sake of readability.
\smallskip

\noindent
\textit{Step 1}. Set $\hat{L}(t,s)=(t-s)^{H-1/2}$. Along the same line as for Lemma \ref{kjh}, it is readily checked that:
\begin{align*}
 &C  \int_s^t \left(\hat{L}(t,u)  f(u) + \int_u^t \lp  f( r ) -f(u) \rp  \, \partial_r \hat{L}(r,u)  dr\right)^2 du  \\
\le & \int_s^t \left(K(t,u)  f(u) + \int_u^t \lp  f( r ) -f(u) \rp  \, \partial_r K(r,u)  dr\right)^2 du,
\end{align*}
by using a scaling argument and the equivalence in distribution of the two processes $\int_0^t K(t+1,s+1)dW_s$ and $d_H \int_0^t (t-s)^{H-1/2} dW_s$.

\smallskip

\noindent
\textit{Step 2}.  Some fractional calculus arguments show that the following lower bound holds true:
\[
 \int_0^1 \left(\hat{L}(1,u)  f(u) + \int_u^1 \lp  f( r ) -f(u) \rp  \, \partial_r \hat{L}(r,u)  dr\right)^2 du \ge C \int_0^1 f(u)^2 du \ge C \min_{[0,1]} f^2 ,
\]
which can be rescaled to get the lower bound of our claim.
\end{proof}

\smallskip

As a last preliminary step before the proof of our bivariate density bound, let us mention that we shall express some of our Malliavin derivatives bounds in terms of H\"older norms on the interval $[s,t]$. However, it will be more convenient to work with Besov norms rather than H\"older's because Besov norms are smooth in the Malliavin calculus sense. This is why we introduce the following quantities: if $Y$ is a process which is $\gamma$-H\"older, $1/2<\gamma <H$, set
\[
\cn_{\gamma,p}^{s,t}(Y)=\int_{s}^{t}\int_{s}^{t}
\frac{|Y_v-Y_u|^{2p}}{|v-u|^{2\gamma p+2}}\,dudv,
\]
where $\gamma <H$ and $p>0$.
Then from the Besov-H\"older embedding we have
\[
\|Y\|_{s,t,\gamma} \le C\left( \cn_{\gamma,p}^{s,t}(Y)\right)^{1/2p}, \quad 0 \le s \le t \le 1.
\]
From the Garsia-Rodemich-Rumsey inequality in Carnot groups, this embedding extends to the rough paths case. More precisely,  if $Y$ is a $\gamma$-rough path with lift $\mathbf{Y}$, then,
\[
\| \mathbf{Y}\|_{s,t,\gamma} \le C\left( \cn_{\gamma,p}^{s,t}(\mathbf{Y})\right)^{1/2p}, \quad 0 \le s \le t \le 1,
\]
where now,
\[
\cn_{\gamma,p}^{s,t}(\mathbf{Y})=\int_{s}^{t}\int_{s}^{t}
\frac{|\Gamma_{u,v}|^{2p}}{|v-u|^{2\gamma p+2}}\,dudv.
\]
with 
\[
\Gamma_{s,t}=\sum_{k=1}^{[1/\gamma]} \left\|  \int_{\Delta^k [s,t]}  dY^{\otimes k} \right\|^{1/k}.
\]
With this notation in mind, using the interpolation inequalities we just proved and arguing as in Section 4 we obtain then the following estimates.

\begin{proposition} \label{estimate bivar mall} Let $\varepsilon \in (0,1)$, and consider $H\in(1/4,1)$. Recall that the Malliavin matrix $\Gamma_{F}$ of a random variable $F$ with derivatives taken with respect to the Wiener process $W$ are defined by \eqref{eq:def-conditional-sobolev}. Then there exist constants $C,r >0$ such that for $ \varepsilon \le s \le t \le 1$ the following bounds hold true for $\gamma <H$:
 \begin{eqnarray*}
 \Vert \Gamma_{X^x_{t}-X^x_{s},s}^{-1}   \Vert^n_{n, 2^{n+2},s} 
 &\le& 
 \frac{C}{(t-s)^{2nH}} \, \mathbb{E}_s^{\frac{n}{2^{n+2}}} \!\left[ (1+  \cn_{\gamma,p}^{0,1} (\mathbf{M}))^r  \right]  \\
 \Vert D(X^x_{t}-X^x_{s}) \Vert^n_{n, 2^{n+2},s} 
 &\le& 
 C  \, (t-s)^{nH} \mathbb{E}_s^{\frac{n}{2^{n+2}}} \!\left[ (1+  \cn_{\gamma,p}^{0,1} (\mathbf{M}) )^r  \right],
\end{eqnarray*}
where 
\[
M=(B, \hat{Y}, X^x, \mathbf{J}, \mathbf{J}^{-1}),
\]
with $\hat{Y}_t=\int_0^t (t-s)^{H-1/2} d\hat{W}_s$ where $\hat{W}$ is a Brownian motion independent from $W$.
\end{proposition}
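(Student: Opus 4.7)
The plan is to mirror the unconditional estimates established in Lemmas \ref{th:DX mp norm} and \ref{th:gamma 0p norm}, working conditionally on $\mathcal{F}_s$ and on the time increment $[s,t]$, while tracking the precise power of $(t-s)$. The starting observation is that for $u > s$, the random variable $X^x_s$ is $\mathcal{F}_s$-measurable, so $D_u X^x_s = 0$; hence $D_u(X^x_t - X^x_s) = D_u X^x_t = \mathbf{J}_t Q_{ut}$ by Proposition \ref{prop:deriv-sdeW}, while $Q_{ut}=0$ for $u>t$. Consequently
\[
\Gamma_{X^x_t - X^x_s,\,s} \,=\, \sum_{k=1}^{d}\int_s^t \bigl(\mathbf{J}_t Q^k_{ut}\bigr)\bigl(\mathbf{J}_t Q^k_{ut}\bigr)^{*}\,du,
\]
and the conditional $L^2_s$-norm of $D(X^x_t-X^x_s)$ reduces to an integral over $[s,t]$.

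For the upper bound on $\|D(X^x_t-X^x_s)\|_{n,2^{n+2},s}$, I would estimate $|\mathbf{J}_t Q^k_{ut}|$ from the explicit formula \eqref{malliavin-W2} together with the scaling $\partial_v K(v,\cdot) \simeq (v-\cdot)^{H-3/2}$ of the Volterra kernel, producing a factor $(t-s)^{2H}$ after integration in $u$, the remaining constant being controlled by $\|X^x\|_{s,t,\gamma}$, $\|\mathbf{J}\|_{s,t,\gamma}$ and $\|\mathbf{J}^{-1}\|_{s,t,\gamma}$. For higher-order Malliavin derivatives I would iterate the Inahama scheme already used in Lemma \ref{th:DX mp norm}, lifting iterated derivatives to rough integrals $\int \phi(M_k)\,d\mathbf{M}_k$ driven by an augmented rough path whose independent Gaussian components are absorbed into the process $\hat{Y}$ of the statement (here the use of the $H-\tfrac12$-Riemann--Liouville Gaussian $\hat{Y}$ instead of an independent fBm is what allows one to work with derivatives with respect to $W$ rather than $B$). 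The Besov embedding into rough paths supplied by Garsia--Rodemich--Rumsey in Carnot groups then bounds all $p$-variation seminorms on $[s,t]$ by a constant multiple of $\bigl(\mathcal{N}^{0,1}_{\gamma,p}(\mathbf{M})\bigr)^{1/2p}$, and, after conditional $L^{2^{n+2}/n}$-integration, yields the claimed bound.

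For the lower bound on the Malliavin matrix I would evaluate the quadratic form: for $y \in \mathbb{R}^n$, setting $f^k_u = V_k(X^x_u)^{*}(\mathbf{J}_u^{-1})^{*}\mathbf{J}_t^{*} y$,
\[
y^{*}\Gamma_{X^x_t-X^x_s,\,s}\,y \,=\, \sum_k \int_s^t \left|\int_u^t \partial_v K(v,u)\,f^k_v\,dv\right|^2 du\qquad (H>1/2),
\]
with the analogous expression involving $K(t,u)\,f^k_u + \int_u^t(f^k_v-f^k_u)\,\partial_v K(v,u)\,dv$ in the range $H\le 1/2$. Applying Lemma \ref{kjh} (respectively Lemma \ref{lem:interpol-st-less-1/2}) componentwise, and using Hypothesis \ref{hyp:elliptic} in the form $\sum_k |V_k(x)^{*} z|^2 \ge \lambda |z|^2$ with $z=(\mathbf{J}_u^{-1})^{*}\mathbf{J}_t^{*} y$, one obtains
\[
y^{*}\Gamma_{X^x_t-X^x_s,\,s}\,y \,\ge\, c\,(t-s)^{2H}\,|y|^2 \cdot R^{-1},
\]
with $R$ a positive random quantity depending polynomially on $\|X^x\|_\gamma$, $\|\mathbf{J}\|_\gamma$ and $\|\mathbf{J}^{-1}\|_\gamma$. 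Inverting this pointwise estimate and propagating Malliavin derivatives through $\Gamma^{-1}$ by repeated use of the identity \eqref{D gamma}, exactly as in Lemma \ref{th:gamma 0p norm}, delivers the desired $(t-s)^{-2nH}$ scaling.

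The main obstacle will be the conditional bookkeeping: one must verify that the random quantity $R$, together with all iterated Jacobians and their Malliavin derivatives arising in the recursive control of $\Gamma^{-1}$, admits a conditional moment bound by the single scalar $\mathcal{N}^{0,1}_{\gamma,p}(\mathbf{M})$. This requires combining the conditional analogue of \eqref{eq:upp-bnd-J-with-B-p-var}, the Besov--rough path embedding, and the key fact that the auxiliary Brownian motion $\hat{W}$ generating $\hat{Y}$ is independent of $\mathcal{F}_s$, so that conditional expectations with respect to $\mathcal{F}_s$ reduce to expectations over $\hat{W}$ while $B$ is frozen on $[0,s]$. Once this bookkeeping is in place, the two estimates follow exactly the routes of Section \ref{sec:upper-bounds}, with $[0,t]$ replaced by $[s,t]$ and the scale-invariant interpolation lemmas of Section \ref{sec:hitting-probabilities} replacing Proposition \ref{interpolation}.
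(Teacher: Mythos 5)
Your proposal follows essentially the same route as the paper's proof. You correctly reduce the conditional Malliavin matrix $\Gamma_{X^x_t-X^x_s,\,s}$ to an integral of $\mathbf{J}_t Q^k_{ut}$ over $[s,t]$, invoke the scale-invariant interpolation Lemmas \ref{kjh} and \ref{lem:interpol-st-less-1/2} together with Hypothesis \ref{hyp:elliptic} for the lower bound on the quadratic form, use Inahama's scheme with the auxiliary Riemann--Liouville process $\hat{Y}$ for the derivative estimate (and you correctly identify why $\hat{Y}$ replaces the independent fBm of Lemma \ref{th:DX mp norm} once one differentiates with respect to $W$), and control all rough-path/H\"older quantities via the Besov embedding by $\mathcal{N}^{0,1}_{\gamma,p}(\mathbf{M})$. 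The only cosmetic difference is bookkeeping: the paper factors $\mathbf{J}_t$ out to work with a reduced Malliavin matrix as in Section \ref{sec:upper-bounds}, while you absorb $\mathbf{J}_t^*$ into the test function $f^k$, which is equivalent. You could be a bit more explicit that the passage from the true Volterra kernel $K$ to the translation-invariant kernel $\hat{L}(t,s)=(t-s)^{H-1/2}$, justified by the equivalence in law of the two associated Gaussian processes (Steps 1 of Lemmas \ref{kjh}--\ref{lem:interpol-st-less-1/2}), is what makes the exact scaling factor $(t-s)^H$ appear in $\tilde\Theta_1$; but your outline points at all the right ingredients.
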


\begin{proof}
Taking into account the interpolation inequalities of Lemmas \ref{kjh} and \ref{lem:interpol-st-less-1/2}, the bound
\[
 \Vert \Gamma_{X^x_{t}-X^x_{s},s}^{-1}   \Vert^n_{n, 2^{n+2},s}  \le \frac{C}{(t-s)^{2nH}} \, \mathbb{E}_s^{\frac{n}{2^{n+2}}} \!\left[ (1+  \cn_{\gamma,p}^{0,1} (\mathbf{M}))^r  \right] 
\]
 follows along the same lines as in Section \ref{sec:upper-bounds}. We now turn to the upper bound for the Malliavin derivative. Again, we use the method by Inahama \cite{Inahama}.
 Set
 \[
 \Theta_1(t)=\mathbf{J}_t \int_s^t K_t^*( \mathbf{J}^{-1} V(X))(u) d\hat{W}(u)
 \]
 where
  \begin{align*}
  K_t^* ( \mathbf{J}^{-1} V(X))(v) =
 \begin{cases}
 \int_v^t \partial_u K(u,s) \mathbf{J}^{-1}_{u} V_j(X_u) du, \quad H >1/2 \\
K(t,v)  \mathbf{J}^{-1}_{v} V_j(X_v) + \int_v^t  \lp  \mathbf{J}^{-1}_{r} V_j(X_r)- \mathbf{J}^{-1}_{s} V_j(X_s) \rp  \, \partial_r K(r,s)  dr, \quad H \le 1/2.
\end{cases}
\end{align*}
As in Inahama \cite{Inahama}, we have
\[
\| D(X^x_t-X^x_s)\|_{L^2_s} \le C \hat{\mathbb{E}} (| \Theta_1(t)|^2 )^{1/2}.
\]
From the previous lemmas, we can estimate
\[
 \hat{\mathbb{E}} (| \Theta_1(t)|^2 )^{1/2} \le C  \hat{\mathbb{E}} (| \tilde{\Theta}_1(t)|^2 )^{1/2},
\]
where
 \[
 \tilde{\Theta}_1(t)=\mathbf{J}_t \int_s^t \hat{L}_t^*( \mathbf{J} V(X))(u) d\hat{W}(u),
 \]
 with, as before, $\hat{L}(t,s)=(t-s)^{H-1/2}$. We can now write $\tilde{\Theta}$ as a rough paths integral,
 \[
 \tilde{\Theta}_1(t)=\mathbf{J}_t \int_s^t \mathbf{J}^{-1}_u V(X_u) d\hat{Z}(u),
 \]
 where
 \[
 \hat{Z}(u)=\int_s^u  (s-v)^{H-1/2}d\hat{W}(v).
 \]
 The advantage of working with the kernel $(s-v)^{H-1/2}$ is that it is translation invariant, so it is easily seen that we have in distribution with respect to $\hat{\mathbb{P}}$ (that is $W$ is fixed),
 \[
 \tilde{\Theta}_1(t)=(t-s)^{H} \mathbf{J}_t \int_0^1 \mathbf{J}^{-1}_{s+(t-s)u}  V(X_{s+(t-s)u}) d\hat{Y}(u),
 \]
 where $\hat{Y}$ is an independent copy of the process $Y$ defined by \eqref{eq:def-process-Y}.
 Using rough paths theory, as in Section 4, we get an upper bound of the form $(1+  \cn_{\gamma,p}^{0,1} (\mathbf{M}) )^r$ for the integral $\int_0^1 \mathbf{J}^{-1}_{s+(t-s)u}  V(X_{s+(t-s)u}) d\hat{Y}(u)$. Thus we get
 \begin{align*}
\mathbb{E}_s\left( \| D(X^x_t-X^x_s)\|^n_{L^2_s}\right)^{1/n} & \le C (t-s)^{H} \mathbb{E}_s \left( \hat{\mathbb{E}} \left(  (1+  \cn_{\gamma,p}^{0,1} (\mathbf{M}) )^{2r}\right)^{n/2} \right)^{1/n} \\
 & \le  C (t-s)^{H} \mathbb{E}_s \left(  \left(  1+  \cn_{\gamma,p}^{0,1} (\mathbf{M}) \right)^{rn}\right)^{1/n}.
 \end{align*}
 Higher order derivatives are treated similarly.
\end{proof}

We are finally  ready for the proof of Condition {\bf (A2)}.

\begin{proof}[Proof that Condition {\bf (A2)} holds with $\beta=n$]

In all the proof the range  of the parameters $s,t$ will be $\varepsilon <  s \le t \le 1$ where $0<\varepsilon <1$. Also $C$ will denote a deterministic constant  that varies from line to line but which is independent from $s,t$ (however it may depend on other parameters like $n,p,V_i,\varepsilon$). 

Consider the joint probability density function of the $2n$-dimensional random vector $(X^x_t,$ $X^x_s)$ with $s<t$ denoted $p_{s,t}(z_1, z_2)$ (the fact that it exists as a smooth function is a consequence of Proposition \ref{estimate bivar mall}).
We then write
$$
p_{s,t}(z_1, z_2)=\hat{p}_{s, t-s}(z_1, z_2-z_1), \quad \text{ for } z_1, z_2 \in \R^n,
$$
where $\hat{p}_{s, t-s}(\cdot, \cdot)$ denotes the density of the random vector $(X^x_s, X^x_t-X^x_s)$. We now bound the function $\hat{p}_{s, t-s}$, which shall be expressed as
\begin{align*}
\hat{p}_{s, t-s}(\xi_{1},\xi_{2})
& = \EE\lc \delta_{\xi_1}(X_s^x) \, \delta_{\xi_2}(X_t^x-X_s^x) \,  \rc,
\quad\text{for}\quad
\xi_{1},\xi_{2}\in\R^{n},\\
 & = \EE\lc \delta_{\xi_1}(X_s^x) \, \EE_{s}\lc \delta_{\xi_2}(X_t^x-X_s^x)  \rc \rc.
\end{align*}
The idea is now to bound $M_{st}= \EE_{s}\lc \delta_{\xi_2}(X_t^x-X_s^x)  \rc $ by using first the conditional integration by parts formula in Proposition \ref{prop:int-parts-cond-W} and then Cauchy-Schwarz inequality. We obtain
\begin{equation} \label{term0}
|M_{s,t}| \le  C \Vert \Gamma_{X^x_{t}-X^x_{s},s}^{-1}   \Vert^n_{n, 2^{n+2},s} \, \Vert D(X^x_{t}-X^x_{s}) \Vert^n_{n, 2^{n+2},s}  \EE_{s}^{1/2}\lc \1_{(X_t^x-X_s^x > \xi_{2})}  \rc.
\end{equation}
Thus, owing to Proposition \ref{estimate bivar mall} we obtain:
\begin{equation}\label{eq:first-bnd-bivariate}
\hat{p}_{s, t-s}(\xi_{1},\xi_{2}) \le  
\frac{C}{(t-s)^{nH}}   
\EE\lc \delta_{\xi_1}(X_s^x) \,
\mathbb{E}_s^{\frac{n}{2^{n+1}}}\! \left[ (1+  \cn_{\gamma,p}^{0,1} (\mathbf{M}))^r  \right] \,
\EE_{s}^{1/2}\!\left[ \1_{(X_t^x-X_s^x > \xi_{2})} \right] \rc
\end{equation}
Furthermore, it is readily checked that
\begin{equation*}
|X_t^x-X_s^x| \le C\, |t-s|^\gamma 
\cn_{\ga,2p}^{1/2p}(\mathbf{M}),
\end{equation*}
and thus, for $q$ arbitrarily large, we have
\begin{equation*}
\EE_{s}\left[ \1_{(X_t^x-X_s^x > \xi_{2})} \right] \le 
 C \, \left( 1 \wedge \frac{|t-s|^{\gamma q}}{\xi^q_2} \EE_{s}\left[ \cn_{\ga,2p}^{q}(\mathbf{M})\right] \right).
\end{equation*}
Plugging this inequality into \eqref{eq:first-bnd-bivariate}, we end up with:
\begin{equation}\label{eq:bnd-density-conditional}
\hat{p}_{s, t-s}(\xi_{1},\xi_{2}) \le  \frac{C}{(t-s)^{nH}}   \,
\EE\left[ \delta_{\xi_1}(X_s^x) \, \Psi_1 \, 
\left( 1 \wedge \frac{|t-s|^{\gamma q} }{\xi^q_2} \, \Psi_2 \right) \right]
\end{equation}
where $\Psi_1$ and $\Psi_2$ are two random variables which are smooth in the Malliavin calculus sense. We can now integrate \eqref{eq:bnd-density-conditional} safely by parts in order to regularize the term $\delta_{\xi_1}(X_s^x)$, which finishes the proof.

\end{proof}

\subsection{Lower bound on hitting probabilities}

We now apply Theorem \ref{thm:cap:LB}, which yields the lower bound of Theorem \ref{thm:hitting-capacity-X}.

\begin{theorem}\label{thm:cap:LB2}
 Let $X_t^x$ denote the solution to equation \textnormal{(\ref{eq:sde})} where $B$ is a fractional Brownian motion with Hurst parameter
$H >\frac14$ and where the vector fields $V_1,\ldots,V_d$ satisfy Hypothesis \ref{hyp:elliptic}. Fix $0<a<b\le 1$ and $M >0$. Then there exists a positive constant $c=c(a,b,H,M,n)$ such that for all compact sets $A\subseteq[-M\,,M]^n$,
            \begin{equation*}
                \mathbb{P}(X_t^x([a,b]) \cap A
                \neq\varnothing) \geq c\, 
                \textnormal{Cap}_{n-\frac{1}{H}}(A).
            \end{equation*}
\end{theorem}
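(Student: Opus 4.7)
The plan is to verify hypotheses (A1) and (A2) of the general Theorem \ref{thm:cap:LB} for the process $u_t = X_t^x$ with exponent $\beta = n$, so that the conclusion of that theorem yields the announced lower bound with $\alpha = \beta - 1/H = n - 1/H$.

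Condition (A2), which requires an upper bound on the joint density $p_{s,t}(z_1,z_2)$, has already been established in the preceding subsection via conditional Malliavin calculus based on the underlying Wiener process $W$, precisely with exponent $\beta = n$. Thus the only remaining task is to verify Condition (A1), namely the uniform lower bound $\int_a^b p_t(z)\,dt \ge C(a,b,M,n)$ for $z\in[-M,M]^n$. The starting point is to observe that the uniform ellipticity Hypothesis \ref{hyp:elliptic} implies the Ben Arous--L\'eandre Hypothesis \ref{hyp:ben-arous-leandre}: the geometric sufficient condition recalled after Hypothesis \ref{hyp:ben-arous-leandre} is trivially satisfied because under \eqref{eq:hyp-elliptic} the vectors $V_1(x),\ldots,V_d(x)$ already span $\R^n$ at every $x$; moreover the skeleton ODE \eqref{eq:skeleton} is controllable, so any target $y\in\R^n$ can be reached by some $h\in\ch$, and the derivative of $\Phi$ at $h$ is surjective since its image is $\mathbf{J}_t$ times the span of $\{V_j(\Phi(h)_\cdot);\,1\le j\le d\}$ integrated against admissible directions. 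Theorem \ref{thm:strict-positivity-intro} therefore guarantees $p_t(z) > 0$ for every $t\in(0,1]$ and every $z\in\R^n$.

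To upgrade this pointwise positivity to the required uniform integral estimate, I would establish joint continuity of $(t,z)\mapsto p_t(z)$ on $[a,b]\times\R^n$. From the probabilistic representation of Proposition \ref{density},
$$p_t(z) = (-1)^{n-|\sigma|}\,\EE\!\left[\mathbf{1}_{\{X_t^{x,i}>z^i,\,i\in\sigma;\ X_t^{x,i}<z^i,\,i\notin\sigma\}}\,H_{(1,\ldots,n)}(X_t^x,1)\right],$$
continuity in $z$ is immediate from dominated convergence together with the $L^p$-bounds on the Malliavin weight $H_{(1,\ldots,n)}(X_t^x,1)$, while continuity in $t$ reduces, via \eqref{Holder}, to $L^p$-continuity of $X_t^x$, its Jacobian $\mathbf{J}_t$, and the inverse of the Malliavin matrix $\gamma_t$. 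These continuity properties follow from the rough-paths estimates of Proposition \ref{prop:moments-sdes-rough} combined with the uniform bounds of Lemmas \ref{th:DX mp norm} and \ref{th:gamma 0p norm} on the compact subinterval $[a,b]\subset(0,1]$. A strictly positive continuous function on a compact set attains a positive infimum, so $\inf_{(t,z)\in[a,b]\times[-M,M]^n}p_t(z) \ge C_0 > 0$, which immediately yields (A1) with $C = (b-a)\,C_0$.

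The main obstacle is the joint continuity in $(t,z)$: although strict positivity from Theorem \ref{thm:strict-positivity-intro} and smoothness in $z$ from Proposition \ref{density} are available off the shelf, controlling the $t$-dependence of the Malliavin weight $H_{(1,\ldots,n)}(X_t^x,1)$ requires using the uniform-in-$t$ bound $\|\gamma_t^{-1}\|_{m,p}\le c_{m,p}\,t^{-2H}$ of Lemma \ref{th:gamma 0p norm} (which is harmless on $[a,b]$ since $a>0$) together with $L^p$-continuity of the flow and its Jacobian. Once (A1) is established, Theorem \ref{thm:cap:LB} applied with $\beta = n$ directly delivers the announced lower bound $\mathbb{P}(X^x([a,b]) \cap A \neq \varnothing) \ge c\,\textnormal{Cap}_{n-1/H}(A)$.
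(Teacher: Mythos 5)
Your proposal is correct and follows the same route as the paper: invoke Theorem \ref{thm:cap:LB} with $\beta=n$, note that \textbf{(A2)} is already established, and verify \textbf{(A1)} by combining the strict positivity of $p_t$ from Theorem \ref{thm:strict-positivity-intro} with continuity of the density and the boundedness estimates of Section \ref{sec:upper-bounds}, then take an infimum over the compact set $[-M,M]^n$. The only cosmetic difference is that you establish joint continuity of $(t,z)\mapsto p_t(z)$ (which requires $L^p$-continuity in $t$ of the Malliavin weights), whereas the paper only uses continuity in $z$ of $p_t(z)$ together with dominated convergence for $z\mapsto\int_a^b p_t(z)\,dt$; you also make explicit the implication ``ellipticity $\Rightarrow$ Hypothesis~\ref{hyp:ben-arous-leandre}'' which the paper leaves implicit.
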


\begin{proof}
Since we have already proved that Hypothesis {\bf (A2)} holds with $\beta=n$, it suffices to verify Hypotheses {\bf (A1)} 
of Theorem \ref{thm:cap:LB}. 
First of all, observe that, owing to Theorem \ref{thm:strict-positivity-intro}, the density of our process $p_t(y)$ is strictly positive and continuous in $y$. Moreover, our results of Section~5 also show that this density is uniformly bounded for $t \in [a,b]$ and $y \in \R^n$. Therefore, it holds that for all $z \in [-M,M]^n$,
$$
\int_a^b p_{t}(z) dt \geq \inf_{ \vert z \vert \leq M} \int_a^b p_{t}(z) dt =C(a,b,M)>0,
$$
which proves that {\bf (A1)} holds true.
\end{proof}

As a consequence of Theorem \ref{thm:cap:LB2} and Corollary \ref{cor11}, we have the following result
on hitting points for the process $X_t^x$.
\begin{corollary}
Under the hypotheses of Theorem \ref{thm:cap:LB2}, if $n <\frac{1}{H}$, the process $X_t^x$ hits points
in $\R^n$ with positive probability.
\end{corollary}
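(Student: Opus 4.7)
The plan is to deduce the corollary directly from Theorem \ref{thm:cap:LB2}, following exactly the template of Corollary \ref{cor11}. The key observation is that under the hypothesis $n < 1/H$ we have $\alpha := n - \tfrac{1}{H} < 0$, and by the definition of the Newtonian kernel in \eqref{kernel} we then have $\mathrm{K}_\alpha \equiv 1$. Hence for any point $y \in \R^n$ the singleton $\{y\}$ satisfies $\mathcal{E}_\alpha(\delta_y) = 1$, so that $\mathrm{Cap}_\alpha(\{y\}) = 1 > 0$.

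Fix now $y \in \R^n$ and choose $M > 0$ large enough so that $y \in [-M,M]^n$ (for instance $M = |y| + 1$). The set $\{y\}$ is compact and contained in $[-M,M]^n$, so Theorem \ref{thm:cap:LB2} applies with $A = \{y\}$ and any $0 < a < b \le 1$. This yields a constant $c = c(a,b,H,M,n) > 0$ such that
\begin{equation*}
\PP\bigl( \exists\, t \in [a,b] : X^x_t = y \bigr)
= \PP\bigl( X^x([a,b]) \cap \{y\} \neq \varnothing \bigr)
\ge c\,\mathrm{Cap}_{n-1/H}(\{y\}) = c > 0.
\end{equation*}

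Since the event on the left is contained in $\{\exists\, t > 0 : X^x_t = y\}$, we conclude that $\PP(\exists\, t > 0 : X^x_t = y) > 0$, which is the claim. No significant obstacle is anticipated: the entire content of the corollary is the numerical observation that $n < 1/H$ forces $\alpha < 0$, combined with the trivial computation of the capacity of a point for negative indices, after which Theorem \ref{thm:cap:LB2} does all the work. If one wishes to handle arbitrary positive times rather than a prescribed interval $[a,b]$, one repeats the argument with the exhaustion $(0,\infty) = \bigcup_{m\ge 1} [1/m,m]$ exactly as in the proof of Corollary \ref{cor11}.
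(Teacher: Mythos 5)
Your proposal is correct and follows exactly the route the paper takes: the paper derives this corollary directly from Theorem \ref{thm:cap:LB2} together with Corollary \ref{cor11}, whose content is precisely the observation that $\alpha = n - 1/H < 0$ forces $\mathrm{K}_\alpha \equiv 1$ and hence $\mathrm{Cap}_\alpha(\{y\}) = 1$. Your spelling out of the capacity-of-a-point computation and the choice of $M$ and $[a,b]$ makes explicit what the paper leaves implicit, but the argument is identical.
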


\subsection{Upper bounds on hitting probabilities}

As in the last subsection, we provide a general result that gives sufficient conditions on a continuous stochastic process in order to obtain an upper bound for the hitting probabilities of the process in terms of the Hausdorff measure. The proof follows along the same lines as in  \cite[Theorem 3.1]{DKN}, but for the sake of completeness we sketch the main steps.

Given $\alpha\geq 0$, the $\alpha$-dimensional Hausdorff measure
of a set $A$ in $\R^n$ is defined as
\begin{equation}\label{eq:HausdorfMeasure}
	{\mathcal{H}}_ \alpha (A)= \lim_{\epsilon \rightarrow 0^+} \inf
	\left\{ \sum_{i=1}^{\infty} (2r_i)^ \alpha : A \subseteq
	\bigcup_{i=1}^{\infty} B(x_i, r_i), \ \sup_{i\ge 1} r_i \leq
	\epsilon \right\},
\end{equation}
where $B(x,r)$ denotes the open (Euclidean) ball of radius
$r>0$ centered at $x\in \R^n$. When $\alpha <0$, we define 
$\mathcal{H}_\alpha (A)$ to be infinite.

Let us now consider a continuous stochastic process $(u_{t}, t\geq 0)$ in $\R^n$, and
for all positive integers $N$ and $H \in(0,1)$, set $t_k^{N,H}:=k 2^{-\frac{N}{H}}$,
and $I^{N,H}_k = [t_k^{N,H},t_{k+1}^{N,H}]$.
\begin{theorem}\label{thm:meas:UB} 
    Fix $0<a<b$, $\beta>0$, and $M>0$.
Suppose that there exists $H \in (0,1)$ and $c_H >0$ such that for
	all $z \in [-M,M]^n$, $\epsilon>0$, large $N$ and $I^{N,H}_k \subseteq [a,b]$, 
	\begin{equation}\label{cond:i}
		\mathbb{P}( u(I^{N,H}_k) \cap B(z,\epsilon) \neq \varnothing) \leq c_H\, \epsilon^{\beta}.
	\end{equation}
    Then there exists a positive constant $C=C(a,b,\beta,M,H,n)$ such that for all Borel sets $A \subset [-M,M]^n$,
\begin{equation*}
                \mathbb{P}(u([a,b]) \cap A
                \neq\varnothing) \leq C\mathcal{H}_{\beta-\frac{1}{H}}(A).
            \end{equation*}
  \end{theorem}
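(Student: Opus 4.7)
The plan is a standard covering argument in which the time axis is discretized at the natural H\"older time--space scale $r \leftrightarrow r^{1/H}$. I would first reduce to the interesting regime. If $\beta - 1/H < 0$, the Hausdorff measure is infinite by convention and the inequality is vacuous; if $\mathcal{H}_{\beta-1/H}(A) = \infty$, there is nothing to prove either. Hence it suffices to treat the case $\beta \ge 1/H$ with $\mathcal{H}_{\beta-1/H}(A) < \infty$.

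Given a small $\epsilon > 0$, I would pick a countable cover $\{B(x_i,r_i)\}_{i\ge 1}$ of $A$ with $\sup_i 2r_i \le \epsilon$ whose total weight $\sum_i (2r_i)^{\beta-1/H}$ is within $\epsilon$ of the $\epsilon$-premeasure in \eqref{eq:HausdorfMeasure}. For each $i$, I would choose the integer $N_i$ so that $2^{-N_i} \le r_i < 2^{-N_i+1}$; this is the key calibration, as it matches the temporal length $2^{-N_i/H}$ of the dyadic intervals $I_k^{N_i,H}$ to the natural time-scale $r_i^{1/H}$ associated with a spatial displacement of order $r_i$. For $\epsilon$ small enough, $N_i$ is as large as hypothesis \eqref{cond:i} requires, and the number of indices $k$ with $I_k^{N_i,H} \cap [a,b] \neq \varnothing$ is bounded by $(b-a)\,2^{N_i/H} + 2 \le C_{a,b}\,r_i^{-1/H}$.

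Next I would apply \eqref{cond:i} on each such subinterval with center $z = x_i$ and radius $r_i$, and run a two-tier union bound:
\begin{align*}
\mathbb{P}\bigl(u([a,b])\cap B(x_i,r_i)\neq\varnothing\bigr)
&\le \sum_{k:\,I_k^{N_i,H}\cap[a,b]\neq\varnothing} c_H\,r_i^{\beta}
\;\le\; C_{a,b}\,c_H\,r_i^{\beta-1/H},\\
\mathbb{P}\bigl(u([a,b])\cap A\neq\varnothing\bigr)
&\le \sum_i \mathbb{P}\bigl(u([a,b])\cap B(x_i,r_i)\neq\varnothing\bigr)
\;\le\; C\sum_i (2r_i)^{\beta-1/H}.
\end{align*}
Taking the infimum over admissible covers and then letting $\epsilon \downarrow 0$ would convert the right-hand side into $\mathcal{H}_{\beta-1/H}(A)$ (up to the constant $C$), giving the claimed estimate.

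I do not foresee a real obstacle; the argument is essentially bookkeeping combined with the definition \eqref{eq:HausdorfMeasure} of the Hausdorff measure. The only mildly subtle point is calibrating $N_i$ against $r_i$ through the exponent $1/H$, which is precisely what produces the Hausdorff-dimension shift from $\beta$ to $\beta - 1/H$, in complete symmetry with the lower bound proved in Theorem~\ref{thm:cap:LB} via Newtonian capacity.
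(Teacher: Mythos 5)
Your argument is correct and is essentially the same as the paper's: the paper proves the single-ball estimate $\mathbb{P}(u([a,b])\cap B(z,\epsilon)\neq\varnothing)\le C\epsilon^{\beta-1/H}$ by the same dyadic-in-time union bound with $2^{-N-1}<\epsilon\le 2^{-N}$, and then invokes "a covering argument" which you simply make explicit. Both arguments hinge on the same calibration $N\leftrightarrow-\log_2 r$ that produces the exponent shift $\beta\mapsto\beta-1/H$.
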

  
  \begin{Remark}
Because of the inequalities between capacity and Hausdorff measure, the right-hand side of Theorem \ref{thm:meas:UB} 
can be replaced by $C \, \textnormal{Cap}_{\beta-\frac{1}{H}-\epsilon}(A)$,  (cf. \cite[p. 133]{Kahane:85}).
\end{Remark}
  
  \begin{proof}
When $\beta < \frac{1}{H}$, there is nothing to prove, so we assume
that $\beta-  \frac{1}{H}>0$. Fix $\epsilon \in (0,1)$ and $N \in \mathbb{N}$ such that $2^{-N-1}<\epsilon\leq 2^{-N}$, and write
$$
\mathbb{P}(u([a,b]) \cap B(z,\epsilon)
                \neq\varnothing) \leq \sum_{k:I^{N,H}_k \cap [a,b] \neq\varnothing} \mathbb{P}(u(I^{N,H}_k) \cap B(z,\epsilon)
                \neq\varnothing),
$$
where the number of $k$'s involved in the sum is at most 
$2^{\frac{N}{H}}$. Then, hypothesis (\ref{cond:i}) implies that
for all large $N$ and $z \in A$,
$$
\mathbb{P}(u([a,b]) \cap B(z,\epsilon)
                \neq\varnothing) \leq \tilde{C} 2^{-N(\beta-\frac{1}{H})}
                \leq C \epsilon^{\beta-\frac{1}{H}}.
$$
Finally, a covering argument concludes the desired proof.
  \end{proof}
  
  By the definition of Hausdorff measure and as a consequence of Theorem \ref{thm:meas:UB}, we have the following result
on hitting points for the process $u$.
\begin{corollary} \label{cor:h}
Under the hypotheses of Theorem \ref{thm:meas:UB}, if $\beta >\frac{1}{H}$, the process $u$ does not hit points 
in $\R^n$ a.s., that is,
$$
\mathbb{P}( \exists \, t > 0: u_{t}=x)=0, \quad \text{for all } x \in \R^n.
$$
\end{corollary}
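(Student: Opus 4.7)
The plan is to deduce the result directly from Theorem \ref{thm:meas:UB} applied to the singleton $A=\{x\}$, combined with an exhaustion of the positive real line by compact sub-intervals.

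First I would observe that, under the assumption $\beta > 1/H$, we have $\beta - 1/H > 0$, so by the very definition of Hausdorff measure \eqref{eq:HausdorfMeasure} one gets $\mathcal{H}_{\beta-1/H}(\{x\}) = 0$ for every $x \in \R^n$. Indeed, a single point can be covered by one ball of arbitrarily small radius $\epsilon$, contributing $(2\epsilon)^{\beta - 1/H} \to 0$ as $\epsilon \downarrow 0$.

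Next, for fixed $m \in \mathbb{N}$ I would apply Theorem \ref{thm:meas:UB} on the interval $[a,b]=[1/m, m]$ with the compact set $A=\{x\} \subseteq [-M,M]^n$ for any $M \ge |x|$. This yields
\begin{equation*}
\mathbb{P}\!\left( \exists\, t \in [1/m, m] : u_t = x \right) \;\le\; C\, \mathcal{H}_{\beta-1/H}(\{x\}) \;=\; 0.
\end{equation*}

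Finally I would conclude by the countable subadditivity of $\mathbb{P}$: writing $(0,\infty) = \bigcup_{m \ge 1}[1/m, m]$, we obtain
\begin{equation*}
\mathbb{P}\!\left( \exists\, t > 0 : u_t = x \right) \;\le\; \sum_{m=1}^{\infty} \mathbb{P}\!\left( \exists\, t \in [1/m, m] : u_t = x \right) \;=\; 0,
\end{equation*}
which proves the claim. There is no real obstacle here; the corollary is a genuine consequence of the hitting-probability bound of Theorem \ref{thm:meas:UB} combined with the trivial vanishing of positive-dimensional Hausdorff measure of a point.
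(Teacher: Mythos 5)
Your proof is correct and follows essentially the same route as the paper's: the vanishing of $\mathcal{H}_{\beta-1/H}(\{x\})$ when $\beta > 1/H$, combined with Theorem \ref{thm:meas:UB} and an exhaustion of $(0,\infty)$ by compact intervals. You merely spell out the countable-union step that the paper leaves implicit.
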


\begin{proof}
If $\beta >\frac{1}{H}$, then $\mathcal{H}_{\beta-\frac{1}{H}}(\{ x \})=0$ by the definition of Hausdorff measure,
and the result follows from Theorem \ref{thm:meas:UB}.
\end{proof}

The next result provides sufficient conditions that imply Hypothesis (\ref{cond:i}) of Theorem~\ref{thm:meas:UB}. These conditions are
easier to verify for non-linear equations than Hypothesis~(\ref{cond:i}). The proof follows exactly as the proof of \cite[Theorem 3.3]{DKN}, and is therefore ommitted. It suffices to replace the parabolic metric 
$\Delta((t,x);(s,y))=\vert t-s \vert^{1/2}+\vert x-y \vert$ therein by our fractional metric $\vert t-s \vert^{2H}$.
\begin{theorem} \label{cond}
	Fix $0<a<b$ and $M>0$. Assume that the $\R^n$-valued
	stochastic process $u$ satisfies the following two conditions:
	\begin{itemize}
		\item[\textnormal{(i)}] For any $t>0$,
			the random vector $u_{t}$ has a density $p_{t}(z)$ which
			is uniformly bounded over $z \in [-M,M]^n$ and $t \in [a,b]$.

		\item[\textnormal{(ii)}] For some $H \in (0,1)$ and for all $p>1$, there exists a 
			constant $C=C(p,H,a,b)$ such that for
			any $s,t \in [a,b]$, 
			\begin{equation*} 
				\me [ \vert u_{t}-u_{s} \vert^p] \leq C \vert t-s \vert^{Hp}.
			\end{equation*}
	\end{itemize}
Then for any $\beta \in\,]0\,,n[$, Condition 
    \textnormal{(\ref{cond:i})} in Theorem \ref{thm:meas:UB} is satisfied for such $\beta$.
    \end{theorem}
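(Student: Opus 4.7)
\noindent
\emph{Proof plan for Theorem~\ref{cond}.} The plan is to adapt the argument in \cite[Theorem 3.3]{DKN} to the fBm setting: the parabolic metric $\Delta((t,x);(s,y))=|t-s|^{1/2}+|x-y|$ used there gets replaced by the one-dimensional fractional metric $|t-s|^{H}$ induced by condition~(ii). The starting point is the discretization of the interval $I_k^{N,H}$ of length $L:=2^{-N/H}$ into $m=\lceil L/\delta\rceil$ consecutive sub-intervals $J_i=[\tau_i,\tau_{i+1}]$ of length at most $\delta$, for a parameter $\delta>0$ to be tuned at the end. A union bound gives
\begin{equation*}
\mathbb{P}\bigl(u(I_k^{N,H})\cap B(z,\epsilon)\ne\varnothing\bigr)
\;\le\; \sum_{i=0}^{m-1}\mathbb{P}\bigl(u(J_i)\cap B(z,\epsilon)\ne\varnothing\bigr),
\end{equation*}
and on each sub-interval the visiting event is contained in the union of
\emph{(a)} $\{|u_{\tau_i}-z|<2\epsilon\}$ and \emph{(b)} $\{\sup_{\sigma\in J_i}|u_\sigma-u_{\tau_i}|\ge\epsilon\}$.

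\medskip

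\noindent
For \emph{(a)}, hypothesis (i) yields a constant $K=K(a,b,M,n)$ such that $p_{\tau_i}(\cdot)\le K$ uniformly for $\tau_i\in[a,b]$, and therefore $\mathbb{P}(|u_{\tau_i}-z|<2\epsilon)\le K|B(0,2\epsilon)|\le C\epsilon^n$. For \emph{(b)}, I would combine hypothesis (ii) with the Garsia--Rodemich--Rumsey inequality: for $p$ sufficiently large (so that $1/p<H$), a standard computation bounding
$$\iint_{J_i\times J_i}\frac{|v-u|^{Hp}}{|v-u|^{\gamma p+2}}\,du\,dv$$
for a suitable $\gamma<H$ produces $\mathbb{E}\bigl[\sup_{s,t\in J_i}|u_t-u_s|^p\bigr]\le C_p\,\delta^{Hp}$, and Markov's inequality then gives $\mathbb{P}(\sup_{\sigma\in J_i}|u_\sigma-u_{\tau_i}|\ge\epsilon)\le C\,\delta^{Hp}/\epsilon^p$. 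Summing the two contributions over the $m\sim L/\delta$ sub-intervals yields
\begin{equation*}
\mathbb{P}\bigl(u(I_k^{N,H})\cap B(z,\epsilon)\ne\varnothing\bigr)
\;\le\; C\,\frac{L}{\delta}\Bigl(\epsilon^n+\frac{\delta^{Hp}}{\epsilon^p}\Bigr).
\end{equation*}

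\medskip

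\noindent
It remains to optimize $\delta$ and $p$. Since $2^{-N-1}<\epsilon\le 2^{-N}$ we have $L\sim\epsilon^{1/H}$, so writing $\delta=\epsilon^a$ the two contributions are of order $\epsilon^{1/H+n-a}$ and $\epsilon^{1/H+a(Hp-1)-p}$. Requiring both exponents to be $\ge\beta$ amounts to
$$\frac{\beta+p-1/H}{Hp-1}\;\le\;a\;\le\;\frac{1}{H}+n-\beta,$$
and an elementary manipulation shows that this interval is non-empty precisely when $p>n/(H(n-\beta))$, which is always achievable since $\beta<n$. Choosing $p$ and $a$ in this range gives the desired bound $\mathbb{P}(u(I_k^{N,H})\cap B(z,\epsilon)\ne\varnothing)\le c_H\,\epsilon^\beta$, proving condition~\eqref{cond:i}.

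\medskip

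\noindent
\textbf{Main obstacle.} The only nontrivial step is extracting the optimal Hölder-type moment bound $\mathbb{E}[\sup_{s,t\in J}|u_t-u_s|^p]\le C_p|J|^{Hp}$ from the pointwise control in~(ii) via GRR; the subsequent balancing of exponents and the use of condition (i) are comparatively routine, as is the verification that $\beta<n$ is precisely what makes the parameter window non-empty.
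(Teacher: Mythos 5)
Your proof is correct and follows essentially the same route as the argument in \cite[Theorem 3.3]{DKN} to which the paper defers: sub-divide $I_k^{N,H}$ into pieces of length $\delta=\epsilon^a$, bound the visiting event on each piece by the sum of a density event $\{|u_{\tau_i}-z|<2\epsilon\}$ (controlled by condition (i)) and an oscillation event (controlled via Garsia--Rodemich--Rumsey and condition (ii)), and then optimize $a$ and $p$. I checked the GRR step: with $0<\gamma<H-1/p$ and $p>1/H$ one indeed gets $\EE[\sup_{s,t\in J}|u_t-u_s|^p]\le C_p|J|^{Hp}$, and the balancing condition $\frac{\beta+p-1/H}{Hp-1}\le a\le\frac1H+n-\beta$ reduces, after simplifying the left side to $\frac1H+\frac{\beta}{Hp-1}$, to $p\ge n/(H(n-\beta))$, which is achievable precisely because $\beta<n$. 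This is the intended adaptation of the DKN argument (replacing the parabolic metric by $|t-s|^H$), so you have reconstructed the omitted proof accurately.
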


Let us now apply this general theory to the $n$-dimensional process solution to  equation \textnormal{(\ref{eq:sde})}.
\begin{theorem}\label{thm:haus}
 Let $X_t^x$ denote the solution to equation \textnormal{(\ref{eq:sde})} where $B$ is a fractional Brownian motion with Hurst parameter
$H >\frac14$ and the vector fields satisfy Hypothesis \ref{hyp:elliptic}. Fix $0<a<b \leq 1$, $M >0$ and $\eta>0$. Then there exists a positive constant $C=C(a,b,H,M,n, \eta)$ such that for all Borel sets $A\subseteq[-M\,,M]^n$,
            \begin{equation*}
                \mathbb{P}(X_t^x([a,b]) \cap A
                \neq\varnothing) \leq C\, \mathcal{H}_{n-\frac{1}{H}-\eta}(A).
            \end{equation*}
\end{theorem}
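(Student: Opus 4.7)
The plan is to reduce the theorem to the general hitting bound of Theorem \ref{thm:meas:UB} through the sufficient conditions of Theorem \ref{cond}. Both of those results are already stated in the paper as reusable black boxes, so essentially no new analysis is needed: we only have to verify the uniform density bound (i) and the Hölder moment bound (ii) of Theorem \ref{cond} for $X^{x}$, and then keep careful track of exponents so as to land on the Hausdorff dimension $n-\tfrac{1}{H}-\eta$ advertised in the statement.

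For hypothesis (i) of Theorem \ref{cond}, the upper bound Theorem \ref{thm:upper-bnd-density} gives, for every $y\in\R^{n}$ and $t\in[a,b]$,
\[
p_{t}(y) \leq c_{1}\, t^{-nH} \leq c_{1}\, a^{-nH},
\]
so $p_{t}$ is uniformly bounded on $[-M,M]^{n}\times[a,b]$. Only the prefactor $t^{-nH}$ is needed here; the Gaussian factor plays no role.

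For hypothesis (ii), the required estimate $\EE[|X^{x}_{t}-X^{x}_{s}|^{p}]\leq C_{p}|t-s|^{Hp}$ will come from revisiting the proof of Proposition \ref{prop:exp-moments-rdes} on an arbitrary subinterval $[s,t]\subset[0,1]$. The construction of the stopping times in \eqref{eq:def-tau-i}, the Davie-type telescoping inequality \eqref{eq:telescopic-increments-X}, and the Cass--Litterer--Lyons tail bound \eqref{eq:concentration-N} are all interval-local, so they yield
\[
\PP\Bigl(\sup_{u\in[s,t]}|X^{x}_{u}-X^{x}_{s}|\geq \xi\Bigr) \leq \exp\Bigl(-\frac{c\,\xi^{(2H+1)\wedge 2}}{|t-s|^{2H}}\Bigr).
\]
The change of variable $\xi = |t-s|^{H}\zeta$ then integrates this tail to $\EE[|X^{x}_{t}-X^{x}_{s}|^{p}]\leq C_{p}|t-s|^{Hp}$ for every $p\geq 1$. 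Alternatively, one may bound $|X^{x}_{t}-X^{x}_{s}|\leq \|X^{x}\|_{p\text{-var};[s,t]}$, invoke \eqref{eq:bnd-X-pvar}, and conclude via the scaling identity $\|\mathbf{B}\|_{p\text{-var};[s,t]}\stackrel{\mathrm d}{=}|t-s|^{H}\|\mathbf{B}\|_{p\text{-var};[0,1]}$ coming from self-similarity of fBm. Either way, no new work beyond adjusting the interval of reference is required.

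With both hypotheses established, Theorem \ref{cond} supplies condition \eqref{cond:i} for every $\beta\in(0,n)$. If $\eta\geq n-\tfrac{1}{H}$ then $n-\tfrac{1}{H}-\eta\leq 0$ and the right-hand side $\mathcal{H}_{n-1/H-\eta}(A)$ is infinite by definition, so the inequality is trivial. Otherwise choose $\beta = n-\eta\in(\tfrac{1}{H},n)$; Theorem \ref{thm:meas:UB} applied with this $\beta$ delivers the Hausdorff exponent $\beta-\tfrac{1}{H}=n-\tfrac{1}{H}-\eta$, which is exactly the statement. Since every substantive ingredient has already been proved elsewhere in the paper, there is no real obstacle in this argument; the only subtlety is the interval-local version of the concentration inequality of Proposition \ref{prop:exp-moments-rdes}, which requires nothing beyond rereading its proof with $[s,t]$ in place of $[0,t]$.
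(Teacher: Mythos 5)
Your proposal is correct and follows the same route as the paper's (very terse, three-sentence) argument: reduce to Theorem~\ref{cond}, verify condition~(i) by dropping the exponential factor in Theorem~\ref{thm:upper-bnd-density} to get $p_t(y)\le c_1 t^{-nH}\le c_1 a^{-nH}$, and verify condition~(ii) via the $p$-variation bound \eqref{eq:bnd-X-pvar} combined with the fBm scaling $\|\mathbf B\|_{p\text{-var};[s,t]}\stackrel{\mathrm d}{=}(t-s)^H\|\mathbf B\|_{p\text{-var};[0,1]}$ — exactly the reference the authors cite; your alternative route through an interval-local version of Proposition~\ref{prop:exp-moments-rdes} also works but requires the rereading you flag, so the $\eqref{eq:bnd-X-pvar}$ route is cleaner and is what the paper uses. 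One small imprecision: at $\eta=n-\frac1H$ the Hausdorff exponent is $0$, and under the paper's definition~\eqref{eq:HausdorfMeasure} the measure $\mathcal H_0$ is the counting measure, not identically $+\infty$ (only $\alpha<0$ is declared infinite by convention); however $\mathcal H_0(A)\ge 1$ for any nonempty $A$, so the asserted inequality still holds trivially with $C=1$ and your conclusion is unaffected.
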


\begin{Remark}
Because of the inequalities between capacity and Hausdorff measure, the right-hand side of Theorem \ref{thm:meas:UB} 
can be replaced by $C \, \textnormal{Cap}_{n-\frac{1}{H}-\eta'}(A)$,  (cf. \cite[p. 133]{Kahane:85}).
\end{Remark}

As a consequence of Theorem \ref{thm:haus} and Corollary \ref{cor:h}, we have the following result
on hitting points for the process $X_t^x$.
\begin{corollary}
Under the hypotheses of Theorem \ref{thm:haus}, if $n >\frac{1}{H}$, the process $X_t^x$ does not hit points 
in $\R^n$ a.s.
\end{corollary}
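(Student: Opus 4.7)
The plan is to derive this corollary directly from Theorem \ref{thm:haus} applied to singleton sets, following the strategy already outlined in Corollary \ref{cor:h}. Since the hypothesis $n > 1/H$ is strict, we may pick an $\eta > 0$ sufficiently small so that $n - \tfrac{1}{H} - \eta > 0$; this is the single key inequality that makes the argument work.

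First I would fix an arbitrary point $y \in \R^n$ and a bounded interval $[a,b] \subset (0,1]$ with $b \le 1$, and choose $M > |y|$ so that $\{y\} \subset [-M,M]^n$. Applying Theorem \ref{thm:haus} with this choice of $a,b,M,\eta$ to the compact set $A = \{y\}$, one gets
\begin{equation*}
\mathbb{P}\bigl( X^x([a,b]) \ni y \bigr) \;\le\; C\,\mathcal{H}_{n - \frac{1}{H} - \eta}\bigl(\{y\}\bigr).
\end{equation*}
By the definition \eqref{eq:HausdorfMeasure} of Hausdorff measure, the $\alpha$-dimensional Hausdorff measure of a single point vanishes whenever $\alpha > 0$. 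Since we arranged $n - \tfrac{1}{H} - \eta > 0$, the right-hand side above is zero, and thus $\mathbb{P}(\exists t \in [a,b] : X_t^x = y) = 0$.

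Second, to upgrade this from a fixed interval to all positive times, I would write $(0,1] = \bigcup_{m \ge 1} [1/m, 1]$ and apply the preceding step on each $[1/m, 1]$ (each contained in $(0,1]$, which is where our hypothesis $t \in (0,1]$ from the statement of Theorem \ref{thm:hitting-capacity-X} is valid). A countable union of null events remains null, hence
\begin{equation*}
\mathbb{P}\bigl( \exists \, t \in (0,1] : X_t^x = y \bigr) \;=\; 0,
\end{equation*}
for every $y \in \R^n$. This is the claimed conclusion (restricted to $(0,1]$, which matches the time horizon set at the beginning of the paper).

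I do not expect any real obstacle here: the only subtlety is the strict inequality, which lets us pick $\eta$ that preserves the positivity needed to annihilate the Hausdorff measure of a point. Everything else is a direct invocation of Theorem \ref{thm:haus} plus a countable union, entirely parallel to the Brownian-motion case \eqref{eq:B-hits-points}.
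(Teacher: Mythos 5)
Your proposal is correct and takes essentially the same approach as the paper. The paper phrases the deduction as a consequence of Theorem \ref{thm:haus} together with the general Corollary \ref{cor:h} (whose proof is precisely the observation that $\mathcal{H}_\alpha(\{x\})=0$ for $\alpha>0$), while you invoke Theorem \ref{thm:haus} directly and absorb the role of $\beta$ into the small $\eta$; these are the same computation, and your countable union over $[1/m,1]$ matches the standard argument used in the companion Corollary \ref{cor11}.
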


\noindent {\it Proof of Theorem \ref{thm:haus}.}
It suffices to check that Conditions (i) and (ii) of Theorem \ref{cond} 
hold true for the solution to our equation \textnormal{(\ref{eq:sde})}.
Condition (i) follows straightforwardly from our results in Section \ref{sec:upper-bounds}.
Condition (ii) follows from \eqref{eq:bnd-X-pvar}.
\qed

\bigskip

\end{document}